\documentclass{tran-l}
\usepackage{amssymb}
\newtheorem{theorem}{Theorem}[section]
\newtheorem{lemma}[theorem]{Lemma}

\theoremstyle{definition}
\newtheorem{definition}[theorem]{Definition}
\newtheorem{example}[theorem]{Example}

\newtheorem{corollary}[theorem]{Corollary}
\theoremstyle{remark}
\newtheorem{remark}[theorem]{Remark}

\numberwithin{equation}{section}

\begin{document}

\title{Unitary-Invariant Decomposition of Reducible Total Least Squares Core Problems}

\author{Sijia Yu}

\author{Bruno Carpentieri}
\author{Yan-Fei Jing}




\keywords{total least square problem, reducible core problem, row separation problem}

\begin{abstract}
The analysis of a total least square problem (TLS) can be reduced to that of an associated core problem, which typically has lower dimension and improved solubility properties. Nevertheless, even a core problem may remain reducible, admitting further decomposition into irreducible component subproblems with simpler structure and better analytical properties. However, no systematic and invariant procedure is available for identifying all such component subproblems, either over either real or complex field.

In this paper, a complete and constructive framework is developed for the exact decomposition of TLS core problems into unitary-unique irreducible component subproblems.

By working over the complex field and exploiting the spectral structure of covariance operators associated with $\{C\}$-subset subproblems, the proposed strategy yields all complex indivisible subspaces which will lead to irreducible component sub-problems. As a consequence, we prove that irreducible component subproblems are uniquely determined up to unitary transformations and permutation, thereby partially resolving an open question left in [Yu, Jing. SIAM J. Matrix Anal. Appl., 46 (2025), pp. 1616–1639.]
\end{abstract}

\maketitle


.

\section{Introduction}
In this paper, we consider an inconsistent linear problem of the form
\begin{equation}
AX \approx B,
\label{eq:incompatible problem}
\end{equation}
where $A \in \mathbb{F}^{m \times n}$ is a system (data) matrix, $B \in \mathbb{F}^{m \times d}$ is a matrix of observations, and $X \in \mathbb{F}^{n \times d}$ is the unknown solution matrix, with $\mathbb{F} \in \{\mathbb{R}, \mathbb{C}\}$. We assume that $A^*B \neq 0$ and that $\mathcal{R}(B) \not\subset \mathcal{R}(A)$, so that \eqref{eq:incompatible problem} admits no exact solution.

Such inconsistency is not exceptional but naturally arises in a wide range of applications in which both the system model and the observed data are affected by noise, modeling uncertainty, or measurement errors. Typical examples include system identification \cite{someidentificationmethods,systempulsetransfer}, signal processing \cite{Exactmaximumlikelihood}, and linear regression with errors in variables \cite{Estimation}, as commonly encountered in statistical learning and data-driven modeling. In these settings, the construction of reliable approximate solutions and the analysis of error propagation in the presence of incompatible data are of central importance, and have therefore attracted considerable attention in the literature \cite{Ananalysisof,ref4}.

Classical approaches to inconsistent linear problems include the least squares (LS) formulation \cite{ref4, EDS.Total, Algebraic}, the data least squares (DLS) problem \cite{Thedata, EDS.Total}, and the total least squares (TLS) problem \cite{Somemodified, Algebraic, Theanalysis}. While LS accounts for perturbations only in the observation matrix and DLS assumes errors solely in the data matrix, the TLS framework allows for simultaneous perturbations in both $A$ and $B$. For this reason, TLS provides a more realistic modeling paradigm in applications where uncertainty affects both the system and the measurements. 

A substantial body of work has established fundamental results on the existence of TLS solutions and has led to the development of effective algorithms for computing exact or approximate, including truncated, TLS solutions \cite{Ananalysisof,Classicalworks,EDS.Total,OptimalBackward}. 
Most of these contributions, however, are primarily concerned with the computation of TLS solutions and place less emphasis on the internal structure of large-scale TLS problems.

A central idea in the classical analysis of TLS problems, and the main tool for dimension reduction, is the use of orthogonal transformations to isolate the essential part of the data \cite{Ananalysisof}. 
With this spirit, Paige and Strakoš introduced the notion of the TLS core problem, which yields a reduced subproblem retaining all the information required to solve the original TLS problem with a single right-hand side \cite{Scaledtotal}.
This concept was subsequently extended to multiple right-hand sides \cite{Thecore}, to complex-valued TLS problems \cite{TLSformulation}, and to bilinear and tensor formulations \cite{TLSformulation,OnTLS,KrylovSubspace}. 
The TLS core problem is unique up to orthogonal equivalence and can be computed efficiently using Krylov-based techniques \cite{BGGK}.

However, while the TLS core problem isolates the essential part of the data responsible for incompatibility, it does not by itself explain how this core may further decompose internally, nor how unsolvability is distributed across its substructures. These observations motivate the development of a finer structural theory for TLS core problems. 

As shown in \cite{TLSsense,minimization,AnalysisPropertiesandBehaviour}, certain classes of TLS core problems, referred to as \emph{reducible} core problems, admit further decomposition into independent component subproblems through suitable orthogonal transformations. 

In particular, component subproblems are typically lower-dimensional and structurally simpler, which makes them natural candidates for future solvability and numerical analysis. This suggests that meaningful and interpretable solutions may still be obtained by identifying and solving these components, even when the full TLS core problem remains unsolvable. Despite these insights, no systematic and general methodology is currently available for identifying such component subproblems or for distinguishing reducible core problems from irreducible ones, especially in high-dimensional settings. 
 
More recently, the notion of the \emph{row separation problem} was introduced as a key tool for identifying admissible decompositions of TLS core problems \cite{Possible}. 
Within this framework, component subproblems are associated with row separation subspaces, which can in turn be generated from indivisible subspaces. 
However, the analysis in \cite{Possible} does not fully address the intersection properties of ambiguous subspaces, nor does it establish uniqueness of the resulting reduced structures. Moreover, it still remains an open problem to find component sub-problems when the data matrix has multiple singular values. These issues in complex cases will be resolved by the theoretical developments to be presented in this paper.

The aim of this paper is to develop a complete and intrinsic strategy to find all the component sub-problems via unitary transformations and as a natural result, to distinguish the unitary irreducible core problems, which also solves the open problems of complex cases in \cite{TLSsense} and \cite{Possible}.
Moreover, we identify irreducible building blocks which are invariant under unitary transformations and establish comparison principles between different decompositions.

The identification of unitary irreducible components provides a more natural starting point for the study of solubility than the original reducible core problem, since these components are lower-dimensional and typically exhibit improved solubility properties. 
Moreover, the proposed framework offers direct guidance for the design of exact and approximate decomposition algorithms, as it predicts the intrinsic decomposable structure underlying large-scale TLS core problems. We emphasize that the present work is primarily theoretical and structural in nature; algorithmic exploitation and detailed solvability analysis are natural next steps and are left for future investigation.

The remainder of the paper is organized as follows. 
In Section~2, we review basic definitions and fundamental results concerning TLS core problems and row separation problems. 
Section~3 presents structural property and comparison relation of row separation subspaces. 
In Section~4, we introduce the notion of the $c$-division problem in both the real and complex settings and derive essential computational criteria for distinguishing different levels of homogeneity which provide essential theoretical basis for our strategy. 
Section~5 proposes an alternative construction based on division by distinct eigenvalues of covariance matrices for partially inhomogeneous cases and provide construction basis for our strategy.

Then we present the final strategy for treating TLS core problems via unitary transformations and then partially address the open problem posed in \cite{Possible}. 
Finally, Section~6 summarizes the main results and discusses remaining open questions in the real-valued case.

\section{Preliminaries and Row Separation Framework}\label{sec:pre}
We begin by recalling the notion of a TLS core problem and summarizing its essential properties \cite{TLSformulation}, which provide the natural starting point for any intrinsic analysis of unsolvable TLS instances.
We then introduce the row separation framework and the associated subspaces \cite{Possible}, which supply the abstract language and mechanisms needed to identify admissible decompositions and irreducible components.
Together, these concepts form the theoretical foundation for the decomposition strategies to be developed in Sections~3–5.

For clarity, most definitions are collected here and referenced later when needed, allowing the subsequent sections to focus on the development of the main results without repeated technical interruptions.

\subsection{Notation and Conventions}

Throughout the paper, $\sigma(A)$ and $\lambda(A)$ denote, respectively, the set of singular values and the set of eigenvalues of a matrix $A$. 
We write $A^{T}$ for the transpose of $A$, $\bar{A}$ for its entry-wise complex conjugate, and $A^{*}$ for its conjugate transpose. 
The symbols $I$ and $0$ denote the identity matrix and the zero matrix, respectively. 
Unless otherwise stated, all spaces and linear operations are considered over the field $\mathbb{F} \in \{\mathbb{R}, \mathbb{C}\}$. 
For a matrix $A$, $\mathcal{R}(A)$ and $\mathcal{N}(A)$ respectively denote its column space and null space over $\mathbb{F}$. For a subspace $\mathcal{A}$, $\dim(\mathcal{A})$ denotes its dimension, while $\mathrm{rank}(A)$ denotes the rank of a matrix $A$, both taken over $\mathbb{F}$. $\oplus$ denotes sum of linear subspaces.

To distinguish between ordered and unordered collections of indices, we adopt the following notation. 
The symbol $\left(a_1, \ldots, a_k\right)$ denotes an ordered collection of $k$ elements, where the order is fixed. 
The notation $\{C\}= \{a_1, \ldots, a_k\}$ denotes an unordered collection of $k$ distinct elements, where the order is irrelevant and repetitions are not allowed. 


\subsection{TLS Problem and Core Problem in $\mathbb{F}$}\label{sec:coreproblem}

We briefly recall the TLS formulation and introduce the notion of the TLS core problem, which serves as the starting point for the structural analysis developed in this paper.

Given a system matrix $A \in \mathbb{F}^{m \times n}$ and a right-hand side matrix $B \in \mathbb{F}^{m \times d}$, the TLS framework accounts for perturbations in both the data matrix and the observations. 
Specifically, it introduces perturbation matrices $E \in \mathbb{F}^{m \times n}$ and $G \in \mathbb{F}^{m \times d}$ and seeks to solve
\begin{equation}
\min_{E,G}\ \|[E \mid G]\|_F
\quad \text{subject to} \quad (A+E)X_{\mathrm{TLS}} = B+G,
\label{eq:TLS}
\end{equation}
where $X_{\mathrm{TLS}} \in \mathbb{F}^{n \times d}$ denotes a TLS solution and $\|\cdot\|_F$ is the Frobenius norm.

The existence, uniqueness, and structural properties of TLS solutions have been extensively studied in the literature; see, for example, \cite{Classicalworks,TLSformulation} for classical solvability conditions and characterizations. 
In the present work, however, our focus is not on computing TLS solutions directly, but rather on analyzing the intrinsic structure of the associated core problem obtained through unitary transformations.

Next we recall the notion of the TLS core problem, which provides a dimension-reduced representation that retains all structurally relevant information for the analysis of TLS problems.

\begin{definition}[Core Problem in $\mathbb{F}$ {\cite{TLSformulation}}]\label{def:corepro}
To isolate the essential part of the approximation problem $AX \approx B$, consider unitary matrices
$P$, $Q$, and $R$, with $P^{-1} = P^*$, $Q^{-1} = Q^*$, and $R^{-1} = R^*$, such that
\begin{equation}
\left[\widehat{B}\ \vert\ \widehat{A}\right]
= P^*\left[B\ \vert\ A\right]
\begin{bmatrix} R & 0 \\ 0 & Q \end{bmatrix}
= P^*\left[BR\ \vert\ AQ\right]
\equiv
\left[\begin{array}{c|c||c|c}
B_1 & 0 & A_{11} & 0\\
\hline
0 & 0 & 0 & A_{22}
\end{array}\right],
\label{eq2.5}
\end{equation}
where the block $[B_1 \mid A_{11}]$ has minimal dimension and the block $A_{22}$ has maximal dimension among all such unitary transformations.

The subproblem $A_{11}X_{11} \approx B_1$ is called the \emph{core problem}.
\end{definition}

\begin{remark}
The detailed relationship between the TLS problem and its core problem, including how solutions of the original problem can be recovered from solutions of the core problem, is described in \cite{TLSformulation,ReductionofData}.
\end{remark}

The core problem provides a natural starting point for the subsequent structural analysis. 
In particular, both the data matrix $A_{11}$ and the observation matrix $B_1$ are of full column rank; see \cite{TLSformulation} for details. 
Moreover, let columns of $U_1,\ldots,U_{k+1}$ denote orthonormal bases of the left singular vector subspaces associated with $A_{11}$, including a basis of $\mathcal{N}(A_{11}^*)$. 
Then $U_j^* B_1$ is of full row rank for each $j$, which excludes degenerate (trivial) components in the subsequent decomposition; see Definition~\ref{def:row separation problem constructed by core problem}.

\begin{definition}[Reducible (Irreducible) Core Problem in $\mathbb{F}$ \cite{TLSsense}]\label{def:red}
A core problem $\left[B_1 \ \vert \ A_{11}\right]$ is called \emph{reducible} (or \emph{composed}) if, after a unitary transformation, it admits the block-diagonal structure
\begin{equation}
P^*\left[B_1 \ \vert\ A_{11}\right]
\begin{bmatrix} R & 0 \\ 0 & Q \end{bmatrix}
=
\left[\begin{array}{ccc|ccc}
B_1^{(1)} &&& A_{11}^{(1)} \\
& \ddots &&& \ddots \\
&& B_1^{(g)} &&& A_{11}^{(g)}
\end{array}\right],
\label{eq2.6}
\end{equation}
where $P^{-1} = P^*$, $Q^{-1} = Q^*$, and $R^{-1} = R^*$, possibly including the case in which some block $A_{11}^{(t)}$ has no columns, for $t=1,\ldots,g$.  
A core problem that is not reducible is called \emph{irreducible} (or \emph{indecomposable}).

The individual subproblems $\left[B_1^{(t)} \ \vert\ A_{11}^{(t)}\right]$, $t=1,\ldots,g$, are called the \emph{components} of the core problem.  
If $\mathcal{R}\!\left(B_1^{(t)}\right) \subseteq \mathcal{R}\!\left(A_{11}^{(t)}\right)$, the component is called \emph{compatible}; otherwise, it is called \emph{incompatible}.  
If $A_{11}^{(t)}$ has no columns, the component $\left[B_1^{(t)}\right]$ is called \emph{degenerate}.
\end{definition}

\begin{remark}
In this paper, we aim to identify component subproblems of a TLS core problem via unitary transformations. 
Even when the given core problem is real-valued, working over $\mathbb{C}$ allows for a more general structural analysis and often simplifies the characterization of reducible and irreducible structures.
\end{remark}

To distinguish between reducibility at the level of individual components and reducibility of the overall decomposition, we recall the following notion.

\begin{definition}[Irreducible Reduced Structure \cite{Possible}]\label{def:irr}
A reduced structure is called \emph{irreducible} if all of its component subproblems are irreducible core problems.
\end{definition}

For further insight into the significance of reducible core problems, we refer to \cite{Scaledtotal,Principalsubmatrices} for results on singular vector perturbations of $\left[B\ \vert\ A\right]$ and to \cite{Theanalysis} for the solution theory of TLS problems.

\subsection{Row Separation Problem}\label{sec:row separation problem}

 It is shown in \cite{Possible} that the component subproblems of a given TLS core problem can be identified through \emph{row separation subspaces}.
This framework provides a powerful mechanism for detecting admissible decompositions of TLS core problems via subspace analysis.

Since the notion of row separation subspaces is purely based on linear subspace properties, the underlying theory extends naturally from the real to the complex field. 
Accordingly, we formulate the row separation problem over $\mathbb{F}\in\{\mathbb{R},\mathbb{C}\}$, with the aim of constructing row separation structures directly from the row blocks $D_1,\ldots,D_{k+1}$.

\begin{definition}[Row Separation Problem in $\mathbb{F}$]\label{def:row separation problem}
Let $D_1 \in \mathbb{F}^{r_1 \times \hat{d}}, \ldots, D_{k+1} \in \mathbb{F}^{r_{k+1} \times \hat{d}}$ be $k+1$ row blocks with a common number of columns, where $r_1,\ldots,r_{k+1} \le \hat{d}$, and define
\[
D =
\begin{bmatrix}
D_1^T & \cdots & D_{k+1}^T
\end{bmatrix}^T.
\]
The \emph{row separation problem} associated with the block set $\{D_1,\ldots,D_{k+1}\}$ consists of finding:
\begin{itemize}
\item a commutation matrix $E^D$ of dimension $\sum_{j=1}^{k+1} r_j$,
\item unitary (orthogonal when $\mathbb{F}=\mathbb{R}$) matrices $H_1,\ldots,H_{k+1}$ of dimensions $r_1,\ldots,r_{k+1}$, respectively,
\item a unitary (orthogonal when $\mathbb{F}=\mathbb{R}$) matrix $L^D$ of dimension $\hat{d}$,
\item and a block diagonal matrix ${\rm diag}\{B_1^{D},\ldots,B_{g}^{D}\}$, where $g$ is an a priori unknown integer,
\end{itemize}
such that
\[
E^D\,{\rm diag}\{H_1,\ldots,H_{k+1}\}
\begin{bmatrix}
D_1\\ D_2\\ \vdots\\ D_{k+1}
\end{bmatrix}
L^D
=
{\rm diag}\{B_1^{D}, \ldots, B_{g}^{D}\}.
\]
\end{definition} 

\medskip
\noindent
Informally, the row separation problem seeks unitary (orthogonal when $\mathbb{F}=\mathbb{R}$) transformations that reorganize the row information carried by the blocks $D_1,\ldots,D_{k+1}$ into mutually independent structural components, thereby exposing decomposable structure that is not apparent in the original block formulation.

\begin{remark}
The matrix $D$ is not required to have full column rank, and the row blocks $D_j$ may coincide. In such cases, the row separation problem admits trivial components corresponding to zero columns, see Theorem~4.4 in \cite{Possible}. These trivial components can be separated from the nontrivial ones and subsequently recombined, allowing the complete row separation structure to be recovered without loss of generality. We assume in the following discussion that each row block $D_j$ has full row rank. \end{remark}

In the remainder of this subsection, we will repeatedly use the convention of identifying a row separation problem by its associated block set $\{D_1,\ldots,D_{k+1}\}$, without further comment.

\begin{definition}[Row Separation Problem Constructed from a Core Problem]
\label{def:row separation problem constructed by core problem}
Let $A_{11} X_1 \approx B_1$ be a core problem as described in \ref{def:corepro}, and let
$U_1,\ldots,U_{k+1}$ denote orthonormal bases of the left singular vector subspaces associated with $A_{11}$, including a basis of $\mathcal{N}(A_{11}^*)$.
The row separation problem associated with the block set 
\[
\{U_1^* B_1,\ \ldots,\ U_{k+1}^* B_1\}
\]
is called the \emph{row separation problem constructed from the core problem}.
\end{definition}

\begin{remark}
It is shown in \cite{Possible} that any reduced structure of a real TLS core problem, obtained via orthogonal transformations, can be recovered from a suitable solution of the corresponding row separation problem constructed over $\mathbb{R}$. 
The same result extends naturally to the complex setting, since the construction relies only on linear subspace properties.

\end{remark}

In \cite{Possible}, the identification of component subproblems of a TLS core problem is reduced to the analysis of a collection of \emph{row separation subspaces}, which encode the essential decomposable structure of the data. 
These subspaces arise naturally from solutions of the associated row separation problem and provide the fundamental objects for detecting reducible structures.

\begin{definition}[Row Separation Subspace \cite{Possible}]
\label{def:row separation subspace}
A nonempty subspace $\mathcal{S}$ is called a \emph{row separation subspace} of the row separation problem associated with the block set $\{D_1,\ldots,D_{k+1}\}$ if there exists a solution of the row separation problem with transformation matrix $L^D$ and a diagonal block $B_t^D$ such that
\[
\mathcal{S} = \mathcal{R}\!\left(\overline{L^D}\,(B_{t'}^{D})^{T}\right),
\]
where $B_{t'}^{D}$ denotes the sub-block of $B_{t}^{D}$ corresponding to $\mathcal{S}$.
\end{definition}

\begin{remark}
We denote by $\{\mathcal{S}\}^{\mathrm{full}}$ the collection of all row separation subspaces associated with a given row separation problem. 

\end{remark}

To facilitate the analysis of the full row separation problem, it is useful to consider lower-dimensional subproblems obtained by restricting attention to subsets of the row blocks.

\begin{definition}[$\{C\}$-Subset Subproblem \cite{Possible}]
\label{def:C-subset sub-problem}
Consider the row separation problem associated with the block set $\{D_1,\ldots,D_{k+1}\}$. 
Let $\{C\}$ be an index subset such that $\{C\} \subset \{1,\ldots,k+1\}$, where $c$ denotes the cardinality of $\{C\}$. 
The \emph{$\{C\}$-subset subproblem} is defined as the row separation problem associated with the reduced block set
\[
\{D_{j_1},\ldots,D_{j_c}\}.
\]
\end{definition}

\begin{remark}
To distinguish row separation subspaces associated with a $\{C\}$-subset subproblem from those of the full problem, we indicate the index set explicitly by a superscript, writing, for example, $\mathcal{S}^{\{C\}}$.
\end{remark}

To simplify the analysis of the collection $\{\mathcal{S}\}^{\mathrm{full}}$ of row separation subspaces, two notions describing minimal elements at different levels were introduced in \cite{Possible}. 
Minimized subspaces provide a canonical way to associate each direction vector with the smallest row separation subspace that contains it, while indivisible subspaces represent the atomic building blocks of the overall row separation structure.

\begin{definition}[Minimized Subspace \cite{Possible}]
\label{def:minimized subspace}
Consider the row separation problem associated with the block set $\{D_1,\ldots,D_{k+1}\}$ and a vector $v \in \mathcal{R}(D^{T})$. 
The \emph{minimized subspace} $\mathcal{M}(v)$ is defined as the unique subspace satisfying:
\begin{enumerate}
\item $v \in \mathcal{M}(v)$;
\item $\mathcal{M}(v) \in \{\mathcal{S}\}^{\mathrm{full}}$;
\item for every $\mathcal{S} \in \{\mathcal{S}\}^{\mathrm{full}}$ such that $v \in \mathcal{S}$, one has $\mathcal{M}(v) \subseteq \mathcal{S}$.
\end{enumerate}
\end{definition}

\begin{remark}
Detailed construction procedures for the minimized subspace $\mathcal{M}(v)$ are given in [\cite{Possible}, Theorem 3.11].
\end{remark}

\begin{definition}[Indivisible Subspace \cite{Possible}]
\label{def:indivisible subspace}
A subspace $\mathcal{I}$ is called an \emph{indivisible subspace} of the row separation problem associated with the block set 
$\{D_1,\ldots,D_{k+1}\}$ if it satisfies:
\begin{enumerate}
\item $\mathcal{I} \neq \emptyset$;
\item $\mathcal{I} \in \{\mathcal{S}\}^{\mathrm{full}}$;
\item for every $\mathcal{S} \in \{\mathcal{S}\}^{\mathrm{full}}$, either $\mathcal{S} = \mathcal{I}$ or $\mathcal{S} \not\subset \mathcal{I}$.
\end{enumerate}
\end{definition}

The following characterization establishes a direct connection between indivisible subspaces and minimized subspaces, providing an operational interpretation of indivisibility.

\begin{lemma}[\cite{Possible}]
\label{lem:indivisible minimized subspace}
A row separation subspace $\mathcal{I}$ is an indivisible subspace if and only if, for every $v \in \mathcal{I}$, one has
\[
\mathcal{M}(v) = \mathcal{I}.
\]
\end{lemma}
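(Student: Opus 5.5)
The plan is to prove the two implications separately, using only Definitions~\ref{def:minimized subspace} and~\ref{def:indivisible subspace}. I take as given, as in \cite{Possible}, that $\mathcal{M}(v)$ exists and is unique for every nonzero $v\in\mathcal{R}(D^T)$. Throughout I read ``for every $v\in\mathcal{I}$'' as ranging over nonzero $v$, and I read the condition ``$\mathcal{S}\not\subset\mathcal{I}$'' in Definition~\ref{def:indivisible subspace} as ``$\mathcal{S}$ is not a proper subset of $\mathcal{I}$''. Under these readings, indivisibility says exactly that no row separation subspace lies properly inside $\mathcal{I}$.

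\emph{($\Rightarrow$)} Let $\mathcal{I}$ be indivisible and fix a nonzero $v\in\mathcal{I}$.
\begin{enumerate}
\item Since $\mathcal{I}\in\{\mathcal{S}\}^{\mathrm{full}}$ and every row separation subspace lies in $\mathcal{R}(D^T)$, we have $v\in\mathcal{R}(D^T)$, so $\mathcal{M}(v)$ is defined.
\item By property~(3) of Definition~\ref{def:minimized subspace}, applied with $\mathcal{S}=\mathcal{I}$, we get $\mathcal{M}(v)\subseteq\mathcal{I}$.
\item By property~(2), $\mathcal{M}(v)$ is itself a row separation subspace. Indivisibility of $\mathcal{I}$ then forbids $\mathcal{M}(v)$ from being a proper subset of $\mathcal{I}$.
\item Combining the last two steps gives $\mathcal{M}(v)=\mathcal{I}$.
\end{enumerate}

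\emph{($\Leftarrow$)} Let $\mathcal{I}\in\{\mathcal{S}\}^{\mathrm{full}}$ satisfy $\mathcal{M}(v)=\mathcal{I}$ for every nonzero $v\in\mathcal{I}$. Nonemptiness and membership in $\{\mathcal{S}\}^{\mathrm{full}}$ hold by hypothesis, so only condition~(3) of Definition~\ref{def:indivisible subspace} needs checking.
\begin{enumerate}
\item Suppose $\mathcal{S}\in\{\mathcal{S}\}^{\mathrm{full}}$ and $\mathcal{S}\subseteq\mathcal{I}$.
\item Choose a nonzero $v\in\mathcal{S}$. This is possible because row separation subspaces are nontrivial, being ranges of the nonzero blocks $\overline{L^D}(B_{t'}^D)^T$.
\item By minimality, $\mathcal{M}(v)\subseteq\mathcal{S}$.
\item By hypothesis, $\mathcal{M}(v)=\mathcal{I}$, so $\mathcal{I}\subseteq\mathcal{S}\subseteq\mathcal{I}$ and hence $\mathcal{S}=\mathcal{I}$.
\end{enumerate}
So no row separation subspace is a proper subset of $\mathcal{I}$, and $\mathcal{I}$ is indivisible.

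There is no real computational obstacle. The only delicate point is the bookkeeping above: reading ``$\not\subset$'' as ``not a proper subset'', excluding the zero vector, and confirming that every member of $\{\mathcal{S}\}^{\mathrm{full}}$ is a nonzero subspace of $\mathcal{R}(D^T)$. With these conventions fixed, the equivalence follows directly from the definitions.
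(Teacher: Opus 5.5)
Your proof is correct. The paper gives no proof of its own here, since the lemma is imported from \cite{Possible}, and your argument is the natural direct one. For ($\Rightarrow$) you combine property~(3) of Definition~\ref{def:minimized subspace}, applied with $\mathcal{S}=\mathcal{I}$, with condition~(3) of Definition~\ref{def:indivisible subspace}. For ($\Leftarrow$) you apply minimality to a nonzero vector of an arbitrary row separation subspace $\mathcal{S}\subseteq\mathcal{I}$.

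Your bookkeeping also holds up:
\begin{itemize}
\item Both readings of $\not\subset$ give the same condition: no member of $\{\mathcal{S}\}^{\mathrm{full}}$ is a proper subspace of $\mathcal{I}$.
\item Excluding $v=0$ is necessary, not merely a convention. $\mathcal{M}(0)$ would have to lie in every row separation subspace, so it fails to exist as soon as two orthogonal row separation subspaces exist.
\item Every row separation subspace lies in $\mathcal{R}(D^T)$. This follows from $\overline{L^D}\bigl(\mathrm{diag}\{B_1^D,\ldots,B_g^D\}\bigr)^T = D^T\bigl(\mathrm{diag}\{H_1,\ldots,H_{k+1}\}\bigr)^T(E^D)^T$, using $\overline{L^D}(L^D)^T=I$.
\end{itemize}

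The only input that is not purely definitional is the existence of $\mathcal{M}(v)$; uniqueness is automatic from property~(3). You rightly take existence from Theorem~3.11 of \cite{Possible}. It is genuinely needed in ($\Rightarrow$): checking property~(3) for $\mathcal{I}$ directly would require knowing that $\mathcal{S}\cap\mathcal{I}$ is again a row separation subspace.
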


Indivisible subspaces can be further classified according to how they interact with other row separation subspaces. 
This distinction is essential for understanding which structural components can be reliably detected through local analysis and which require a more global treatment.

\begin{definition}[Unambiguous Subspace \cite{Possible}]
\label{def:unambiguous subspace}
A subspace $\mathcal{G}$ is called an \emph{unambiguous subspace} of the row separation problem associated with the block set $\{D_1,\ldots,D_{k+1}\}$ if it satisfies:
\begin{enumerate}
\item $\mathcal{G} \neq \emptyset$;
\item $\mathcal{G} \in \{\mathcal{S}\}^{\mathrm{full}}$;
\item for every $\mathcal{S} \in \{\mathcal{S}\}^{\mathrm{full}}$, either $\mathcal{G} \subseteq \mathcal{S}$ or $\mathcal{G} \perp \mathcal{S}$.
\end{enumerate}
\end{definition}

\begin{remark}
Every unambiguous subspace $\mathcal{G}$ is necessarily an indivisible subspace.
\end{remark}

\begin{definition}[Ambiguous Subspace \cite{Possible}]
\label{def:ambiguous subspace}
A subspace $\mathcal{L}$ is called an \emph{ambiguous subspace} of the row separation problem associated with the block set 
$\{D_1,\ldots,D_{k+1}\}$ if it satisfies:
\begin{enumerate}
\item $\mathcal{L} \neq \emptyset$;
\item $\mathcal{L} \in \{\mathcal{S}\}^{\mathrm{full}}$;
\item for every $\mathcal{S} \in \{\mathcal{S}\}^{\mathrm{full}}$ such that $\mathcal{S} \subset \mathcal{L}$, there exists 
$\mathcal{S}' \in \{\mathcal{S}\}^{\mathrm{full}}$ with $\mathcal{S} \not\perp \mathcal{S}'$ and $\mathcal{S} \not\subset \mathcal{S}'$.
\end{enumerate}
\end{definition}

\begin{remark}
Unlike unambiguous subspaces, ambiguous subspaces are not necessarily indivisible; see Section~3.2.2 in~\cite{Possible} for a detailed discussion.
To isolate the truly atomic building blocks within this class, we introduce the following notion.
\end{remark}

\begin{definition}[Indivisible Ambiguous Subspace \cite{Possible}]
\label{def:indivisible ambiguous subspace}
A subspace $\mathcal{J}$ is called an \emph{indivisible ambiguous subspace} of the row separation problem associated with the block set 
$\{D_1,\ldots,D_{k+1}\}$ if it satisfies:
\begin{enumerate}
\item $\mathcal{J}$ is an indivisible subspace;
\item $\mathcal{J}$ is an ambiguous subspace.
\end{enumerate}
\end{definition}

In particular, an unambiguous subspace detected in a subset subproblem remains unambiguous for the full row separation problem (see also \ref{def:1d}).
By contrast, ambiguous indivisible subspaces do not enjoy this stability property and require a more delicate analysis.

\section{Row Separation Problem Supplement}\label{sec:supplement}

This section develops general structural results for row separation problems over the field
$\mathbb{F}\in\{\mathbb{R},\mathbb{C}\}$.
Building on the framework introduced in \ref{sec:row separation problem}, we will clarify how global row separation structures emerge from their elementary subspace constituents and propose a partial relationship among different row separation problems (and also their corresponding core problems).

All results in this section apply to arbitrary row separation problems over both the real and complex fields and require no additional structural assumptions.
In particular, \ref{sec:structure property} establishes the abstract structural principles that will later be specialized to the class of \emph{division problems} studied in \ref{sec:division problem theory}, where additional algebraic constraints lead to partially homogeneous configurations and stronger, more constructive decomposition properties. Finally, \ref{sec:comparision basis} introduces comparison tools for relating different row separation problems, which will be used to justify the alternative construction proposed in \ref{sec:division problem construction in complex number}.

\subsection{Structure Property}\label{sec:structure property}

We begin by introducing a notion that links individual direction vectors to the atomic structure of row separation subspaces. This concept provides an operational viewpoint on indivisibility, allowing abstract subspace properties to be detected and analyzed through individual vectors.

\begin{definition}\label{def:indivisible direction vector}
Consider the row separation problem associated with the block set $\{D_1,\ldots,D_{k+1}\}$ and a
nonzero vector $v \in \mathcal{R}(D^T)$.
The vector $v$ is called an \emph{indivisible direction vector} if its minimized subspace
$\mathcal{M}(v)$ is an indivisible subspace.
\end{definition}

\begin{remark}
As shown in \cite{Possible}, the decomposition of a reducible TLS core problem can be reduced to
the identification of its indivisible subspaces.
Definition~\ref{def:indivisible direction vector} refines this perspective by recasting the search
for indivisible subspaces as the identification of special direction vectors whose associated
minimized subspaces are indivisible.

\end{remark}

The following theorem shows that indivisible subspaces provide a complete and nonredundant description of the row separation structure.
In particular, it shows that every row separation subspace admits a decomposition into pairwise orthogonal indivisible components.

\begin{theorem}\label{thm:indivisible to row separation subspace}
Consider the row separation problem associated with the block set $\{D_1,\ldots,D_{k+1}\}$ and let
$\mathcal{S}$ be a row separation subspace.
Then there exist indivisible subspaces $\mathcal{I}_1,\ldots,\mathcal{I}_g$ such that
\[
\mathcal{S} = \bigoplus_{t=1}^{g} \mathcal{I}_t,
\]
with $\mathcal{I}_t \perp \mathcal{I}_{t'}$ for all $t \neq t'$.
\end{theorem}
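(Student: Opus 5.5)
The plan is induction on $\dim(\mathcal{S})$, peeling off one indivisible subspace at a time. The key tool is an orthogonal-complement property. If $\mathcal{S}$ is a row separation subspace and $\mathcal{S}'\subset\mathcal{S}$ is another row separation subspace, then $\mathcal{S}\ominus\mathcal{S}'$ (the orthogonal complement of $\mathcal{S}'$ inside $\mathcal{S}$) is again a row separation subspace, provided it is nonzero.

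\textbf{Base case.} If $\dim(\mathcal{S})=1$, any row separation subspace $\mathcal{S}'\subset\mathcal{S}$ with $\mathcal{S}'\neq\mathcal{S}$ would be zero. This is excluded by nonemptiness, so $\mathcal{S}$ is indivisible by Definition~\ref{def:indivisible subspace}. The same reasoning covers any $\mathcal{S}$ that is itself indivisible, and then $g=1$.

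\textbf{Induction step.} Suppose $\mathcal{S}$ is not indivisible. Then there is a row separation subspace $\mathcal{S}'\subsetneq\mathcal{S}$. Concretely, pick $v\in\mathcal{S}$ with $\mathcal{M}(v)\subsetneq\mathcal{S}$, which exists by Lemma~\ref{lem:indivisible minimized subspace}, and set $\mathcal{S}'=\mathcal{M}(v)$. Once we know $\mathcal{S}\ominus\mathcal{S}'$ is a row separation subspace, both $\mathcal{S}'$ and $\mathcal{S}\ominus\mathcal{S}'$ have strictly smaller dimension. The inductive hypothesis decomposes each into pairwise orthogonal indivisible subspaces. Since the two pieces are orthogonal to each other, concatenating the two families gives the required decomposition $\mathcal{S}=\bigoplus_t\mathcal{I}_t$ with $\mathcal{I}_t\perp\mathcal{I}_{t'}$ for $t\neq t'$.

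\textbf{Main obstacle: the complement property.} This is the crux. By Definition~\ref{def:row separation subspace}, $\mathcal{S}$ and $\mathcal{S}'$ come from possibly different solutions of the row separation problem. I would work with the concrete description used in \cite{Possible}: a subspace $\mathcal{S}$ is a row separation subspace exactly when, for every $j$, the row space $\mathcal{R}(D_j^T)$ splits orthogonally as
\[
\mathcal{R}(D_j^T)=\bigl(\mathcal{R}(D_j^T)\cap\mathcal{S}\bigr)\oplus\bigl(\mathcal{R}(D_j^T)\cap\mathcal{S}^{\perp}\bigr),
\]
up to the conjugation convention, with $\mathcal{S}$ spanned by these intersections. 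Equivalently, the orthogonal projector $P_{\mathcal{S}}$ commutes with each projector $P_j$ onto $\mathcal{R}(\overline{D_j^T})$, and $\mathcal{S}\subseteq\mathcal{R}(D^T)$.

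To derive this from the block-diagonal form, choose $H_j$ so that its rows split $D_j$ into those rows lying in the block for $\mathcal{S}$ and those lying outside it. In the other direction, given such splittings, build the $H_j$, the commutation matrix $E^D$, and the matrix $L^D$ whose columns are an orthonormal basis adapted to $\mathcal{S}\oplus\mathcal{S}^{\perp}$. Here $\mathcal{S}^{\perp}$ is taken inside $\mathcal{R}(D^T)$, with the kernel directions appended.

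Granting this characterization, the complement property is linear algebra. The projectors $P_{\mathcal{S}}$ and $P_{\mathcal{S}'}$ both commute with every $P_j$, and $\mathcal{S}'\subset\mathcal{S}$ gives $P_{\mathcal{S}\ominus\mathcal{S}'}=P_{\mathcal{S}}-P_{\mathcal{S}'}$, which therefore also commutes with every $P_j$. The spanning condition holds because the projector difference maps each $\mathcal{R}(D_j^T)$ into $\mathcal{R}(D_j^T)\cap(\mathcal{S}\ominus\mathcal{S}')$. Hence $\mathcal{S}\ominus\mathcal{S}'$ is a row separation subspace, which completes the induction.
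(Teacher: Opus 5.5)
\textbf{Gap in the characterization.} Your induction and the reduction to a complement property are sound, and the complement property itself is true. The characterization you use to prove it is not.

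The ``if'' direction fails because the $H_j$ in Definition~\ref{def:row separation problem} must be unitary. To split the rows of $D_j$ into rows lying in $\mathcal{S}$ and rows lying in $\mathcal{S}^{\perp}$, you need the coefficient vectors $h$ with $D_j^Th\in\mathcal{S}$ to be orthogonal to those with $D_j^Th\in\mathcal{S}^{\perp}$. Commuting with the projector onto $\mathcal{R}(D_j^T)$ does not give this when $D_j$ has several distinct singular values.

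Concretely, take $D_1=\begin{bmatrix}1&0\\1&1\end{bmatrix}$, $D_2=I_2$ and $\mathcal{S}=\mathrm{span}\{e_1\}$. Both row spaces equal $\mathbb{C}^2$, so your condition holds trivially, including $\mathcal{S}\subseteq\mathcal{R}(D^T)$ and the spanning condition. But a row $h^TD_1=(h_1+h_2,\ h_2)$ lies in $\mathcal{S}$ only for $h\parallel(1,0)^T$, and in $\mathcal{S}^{\perp}$ only for $h\parallel(1,-1)^T$. These are not orthogonal, so no unitary $H_1$ exists, and $\mathcal{S}$ is not a row separation subspace. This is exactly why the $1$-division cycle first splits each $D_j$ by its distinct singular values. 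Hence your last step, that $P_{\mathcal{S}}-P_{\mathcal{S}'}$ commutes with every $P_j$ and therefore $\mathcal{S}\ominus\mathcal{S}'$ is a row separation subspace, is unjustified for general blocks.

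\textbf{Repair.} Replace $P_j$ by $G_j=D_j^T\bar D_j$. Both directions then hold.
\begin{itemize}
\item \emph{Necessity.} Suppose a solution places every transposed row of $H_jD_j$ in $\mathcal{S}$ or in $\mathcal{S}^{\perp}$. Let $M=D_j^T$, and let $Q$ be the orthogonal projector onto the span of those columns of $H_j^T$ that $M$ sends into $\mathcal{S}$. Then $P_{\mathcal{S}}M=MQ$, so $P_{\mathcal{S}}MM^*=MQM^*$ is Hermitian, and $P_{\mathcal{S}}$ commutes with $G_j=MM^*$.
\item \emph{Sufficiency.} If $P_{\mathcal{S}}$ commutes with $G_j$, it preserves each eigenspace of $G_j$. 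In the eigenspace for $\sigma^2$, choose an orthonormal basis adapted to $\mathcal{S}\oplus\mathcal{S}^{\perp}$ and pull it back through $D_j^T$. Since $D_j^T$ acts as $\sigma$ times an isometry from the corresponding singular subspace of $\mathbb{C}^{r_j}$ onto that eigenspace, the pulled-back vectors, after normalization, are orthonormal. They form the columns of a unitary $H_j^T$.
\end{itemize}
With $G_j$ in place of $P_j$, the difference $P_{\mathcal{S}}-P_{\mathcal{S}'}$ still commutes with every $G_j$, and your induction goes through unchanged.

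\textbf{Comparison with the paper.} Once repaired, your argument is a self-contained alternative to the paper's proof, which simply assembles the minimized-subspace results (Theorem~3.11, Theorem~3.15, Corollary~3.20) of \cite{Possible}. Your route also shows that row separation subspaces are exactly the nonzero subspaces of $\mathcal{R}(D^T)$ whose orthogonal projectors commute with every $G_j$. The theorem then becomes the decomposition of such a projector into minimal ones.
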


\begin{proof}
The result follows from \ref{def:minimized subspace}, \ref{def:indivisible subspace},
\ref{lem:indivisible minimized subspace}, and Theorem~3.11, Definition~3.12,
Theorem~3.15, and Corollary~3.20 in~\cite{Possible}.
\end{proof}

As a consequence, the family of row separation subspaces is closed under orthogonal direct sums, and its structure is entirely determined by the collection of indivisible subspaces.

\begin{corollary}\label{cor:row separation subspace sum}
Consider the row separation problem associated with the block set $\{D_1,\ldots,D_{k+1}\}$ and two
row separation subspaces $\mathcal{S}$ and $\mathcal{S}'$.
If $\mathcal{S} \perp \mathcal{S}'$, then there exists a row separation subspace $\mathcal{S}''$
such that
\[
\mathcal{S}'' = \mathcal{S} \oplus \mathcal{S}'.
\]
\end{corollary}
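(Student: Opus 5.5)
The plan is to apply Theorem~\ref{thm:indivisible to row separation subspace} to each of the two subspaces and then assemble a single solution of the row separation problem whose $L^D$ and block structure realize the combined subspace. First I decompose
\[
\mathcal{S}=\bigoplus_{t=1}^{g}\mathcal{I}_t,\qquad \mathcal{S}'=\bigoplus_{s=1}^{g'}\mathcal{I}'_s,
\]
with pairwise orthogonal indivisible subspaces in each family. Since $\mathcal{S}\perp\mathcal{S}'$, the combined family $\{\mathcal{I}_1,\ldots,\mathcal{I}_g,\mathcal{I}'_1,\ldots,\mathcal{I}'_{g'}\}$ is pairwise orthogonal, and its direct sum is $\mathcal{S}\oplus\mathcal{S}'$. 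The claim therefore reduces to this statement: \emph{the orthogonal direct sum of a pairwise orthogonal family of indivisible subspaces is a row separation subspace.}

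To prove this reduced claim, I would use the characterization of row separation subspaces from \cite{Possible} that underlies Theorem~\ref{thm:indivisible to row separation subspace}, namely Theorem~3.15 and Corollary~3.20 there. That characterization says the indivisible subspaces of the problem are pairwise either equal or orthogonal. It also says that a single solution $(E^D,H_j,L^D)$ exists whose diagonal blocks $B^D_t$ realize every indivisible subspace simultaneously; in other words, there is a finest solution. Given such a finest solution, the sub-blocks corresponding to the chosen family $\mathcal{I}_1,\ldots,\mathcal{I}'_{g'}$ can be grouped together by a permutation. I would merge them into one diagonal block $B^D_{t'}$ by suitably reordering $E^D$ and the columns of $L^D$. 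By Definition~\ref{def:row separation subspace}, the range $\mathcal{R}(\overline{L^D}(B^D_{t'})^T)$ is exactly the sum of the chosen indivisible subspaces. That range is the required $\mathcal{S}''$.

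The main obstacle is justifying the existence of one common solution that separates all the chosen indivisible components simultaneously. Each $\mathcal{I}_t$ and each $\mathcal{I}'_s$ comes a priori from possibly different solutions, and an ambiguous indivisible subspace could intersect other row separation subspaces non-orthogonally. I would handle this as follows. Each indivisible subspace $\mathcal{I}$ is a row separation subspace, so within each $D_j$ the rows split orthogonally into the part that lies in $\mathcal{I}$ and its orthogonal complement. Orthogonality of the chosen family then makes these row splittings compatible, because the projectors onto the $\mathcal{I}$'s commute. Hence one can choose each $H_j$ to simultaneously block-diagonalize the projections of the rows of $D_j$ onto every chosen $\mathcal{I}$ and onto the complement of their sum. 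I would then take $L^D$ adapted to the orthogonal decomposition of $\mathbb{F}^{\hat d}$ into these pieces. Checking that the complementary rows yield valid additional blocks is the delicate point. I would try to settle it by noting that the complement of a row separation subspace within $\mathcal{R}(D^T)$ is again realized in the same solution, by the definition of the row separation problem.
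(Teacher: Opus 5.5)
\paragraph{Where the proposal breaks.} The step that fails is your second paragraph's description of what \cite{Possible} provides. Indivisible subspaces are \emph{not} pairwise equal or orthogonal, and in general no single ``finest'' solution realizes all of them. Over $\mathbb{C}$, once one indivisible subspace is ambiguous, Corollary~\ref{cor:indivisible subspaces limit} gives infinitely many indivisible subspaces. But a family of distinct, pairwise orthogonal nonzero subspaces of $\mathbb{F}^{\hat d}$ has at most $\hat d$ members. Concretely, in Example~\ref{ex:c-division example}, any unitary $L^D$ together with $H_j=(D_1L^D)^{-1}$ is a solution. So every line $\mathcal{R}(\overline{L^D}e_1)$ is a row separation subspace, hence indivisible. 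This ambiguous regime is exactly the one the paper is about, so merging the blocks of a finest solution cannot be the argument.

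The reduction through Theorem~\ref{thm:indivisible to row separation subspace} also gains nothing. The reduced claim is the original one with more summands, and indivisibility is never used afterwards. The paper itself states the corollary without proof, as a consequence of that theorem and the results of \cite{Possible} cited there. Note that the theorem as stated only gives the decomposition direction, not closure under orthogonal sums.

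\paragraph{How to repair it.} Your last paragraph has the right idea, and done properly it proves the corollary directly for $\mathcal{S}$ and $\mathcal{S}'$. However, the justification you give is not the operative one. Projectors onto mutually orthogonal subspaces commute trivially. What must be controlled is the splitting of the coefficient space $\mathbb{F}^{r_j}$, because $H_j$ has to be unitary. A splitting of $\mathcal{R}(D_j^T)$ alone is not enough. Take the single block $D_1$ with rows $(1,0)$ and $(1,1)$: then $\mathcal{R}(D_1^T)=\mathrm{span}(e_1)\oplus\mathrm{span}(e_2)$. Yet $\mathrm{span}(e_1)$ is not a row separation subspace, because its preimage $\mathrm{span}(e_1)$ under $D_1^T$ is not orthogonal to the preimage $\mathrm{span}(e_1-e_2)$ of $\mathrm{span}(e_2)$.

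The missing step runs as follows.
\begin{itemize}
\item For $\mathcal{X}\subseteq\mathbb{F}^{\hat d}$, set $V_j(\mathcal{X})=\{h\in\mathbb{F}^{r_j}: D_j^Th\in\mathcal{X}\}$.
\item In a solution realizing $\mathcal{S}$, each transposed row of $H_jD_j$ lies in $\mathcal{S}$ or in $\mathcal{S}^\perp$. Since $D_j^T$ is injective (full row rank), $\mathbb{F}^{r_j}=V_j(\mathcal{S})\oplus V_j(\mathcal{S}^\perp)$ is an orthogonal decomposition.
\item If $\mathcal{S}\perp\mathcal{S}'$, then $V_j(\mathcal{S}')\subseteq V_j(\mathcal{S}^\perp)=V_j(\mathcal{S})^\perp$. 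Hence $\mathbb{F}^{r_j}=V_j(\mathcal{S})\oplus V_j(\mathcal{S}')\oplus V_j\bigl((\mathcal{S}\oplus\mathcal{S}')^\perp\bigr)$, orthogonally.
\item Build $H_j$ from orthonormal bases of these three pieces. Choose $L^D$ so that the leading columns of $\overline{L^D}$ form an orthonormal basis of $\mathcal{S}\oplus\mathcal{S}'$, and let $E^D$ group the rows accordingly.
\end{itemize}
All remaining rows lie in $(\mathcal{S}\oplus\mathcal{S}')^\perp$ and simply form one further block, so your ``delicate point'' never arises. An equivalent formulation: $\mathcal{S}$ is a row separation subspace iff the orthogonal projector $P_{\mathcal{S}}$ commutes with every $D_j^T\bar{D}_j$. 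Since $P_{\mathcal{S}\oplus\mathcal{S}'}=P_{\mathcal{S}}+P_{\mathcal{S}'}$, closure under orthogonal sums is then immediate.
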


We now illustrate the relation between row separation subspaces of a $\{C\}$-subset subproblem and those of the original row separation problem.
These results formalize how structural information behaves under restriction to a subset of row blocks.
\begin{corollary}\label{cor:subset minimized}
Consider the row separation problem associated with the block set $\{D_1,\ldots,D_{k+1}\}$ and a non-repetitive index subset $\{C\}=\{j_1,\ldots,j_c\}\subset\{1,\ldots,k+1\}$. 
Let $\mathcal{M}^{\{C\}}(v)$ denote the minimized subspace of a vector $v$ in the $\{C\}$-subset subproblem, and let $\mathcal{M}(v)$ denote the minimized subspace of $v$ in the original row separation problem. 
Then, for any $v\in \mathcal{R}\!\left(D_{j_i}^T\right)$ with $j_i\in \{C\}$, one has
\[
\mathcal{M}^{\{C\}}(v)\subseteq \mathcal{M}(v).
\]
\end{corollary}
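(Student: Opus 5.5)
The plan is to show that every row separation subspace of the full problem that contains $v$ yields, by restriction, a row separation subspace of the $\{C\}$-subset subproblem that still contains $v$. Minimality of $\mathcal{M}^{\{C\}}(v)$ in the subproblem (Definition~\ref{def:minimized subspace}) then finishes the argument. In particular, I will apply this to $\mathcal{S}=\mathcal{M}(v)$ itself.

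\textbf{Step 1: restrict a full solution to the subset.} Take a solution of the full row separation problem. It consists of $E^D$, $H_1,\ldots,H_{k+1}$, $L^D$ and blocks $B_1^D,\ldots,B_g^D$, and it realizes $\mathcal{M}(v)=\mathcal{R}(\overline{L^D}(B_{t'}^D)^T)$. The transformed rows $H_j D_j L^D$ for $j\in\{C\}$ are a subset of the rows of ${\rm diag}\{B_1^D,\ldots,B_g^D\}$. Keep the same $L^D$ and the same $H_{j_1},\ldots,H_{j_c}$, and delete the rows belonging to blocks outside $\{C\}$. Then choose a commutation matrix that regroups the surviving rows by the column blocks of the original block-diagonal form. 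This gives a block-diagonal matrix whose blocks are the row-subsets of the original $B_t^D$; some blocks may become empty and are discarded. So we obtain a valid solution of the $\{C\}$-subset subproblem with the same right transformation $L^D$.

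\textbf{Step 2: identify the restricted subspace containing $v$.} For the solution from Step 1, let $\mathcal{S}^{\{C\}}$ be the span of $\overline{L^D}$ applied to the transposed surviving rows inside the column block(s) corresponding to $\mathcal{M}(v)$. By construction $\mathcal{S}^{\{C\}}\subseteq\mathcal{M}(v)$. Moreover, $\mathcal{S}^{\{C\}}$ equals $\mathcal{M}(v)\cap\mathcal{R}(D_{\{C\}}^T)$, where $D_{\{C\}}$ is the stacked subset matrix. This holds because the rows of $H_jD_jL^D$ split orthogonally across column blocks.

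Next, since $v\in\mathcal{R}(D_{j_i}^T)$, we can write $v^T$ as a combination of the rows of $D_{j_i}$, that is, of the rows of $H_{j_i}D_{j_i}$. Apply $L^D$ to these rows. The rows of $H_{j_i}D_{j_i}L^D$ are supported on single column blocks, so $v$ decomposes into components, one per column block, all lying in $\mathcal{R}(D^T)$. Because $v\in\mathcal{M}(v)$, which is exactly a union of column blocks, every component outside those blocks vanishes. Hence $v\in\mathcal{S}^{\{C\}}$.

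\textbf{Step 3: conclude.} Definition~\ref{def:minimized subspace}(3), applied in the subproblem, gives $\mathcal{M}^{\{C\}}(v)\subseteq\mathcal{S}^{\{C\}}\subseteq\mathcal{M}(v)$. If $\mathcal{M}(v)$ is spanned by several column blocks rather than one sub-block $B_{t'}^D$, I will use Corollary~\ref{cor:row separation subspace sum} to treat it as an orthogonal sum. Restriction respects such sums, so the argument above applies unchanged.

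The main obstacle I expect is making Step 1 rigorous against the paper's notational conventions. I need to check that the deleted-row matrix really has the exact form ${\rm diag}\{B^D_1,\ldots\}$ required by Definition~\ref{def:row separation problem}, after removing empty blocks and reordering rows. I also need to check the conjugation convention in Definition~\ref{def:row separation subspace}, which realizes subspaces as $\mathcal{R}(\overline{L^D}(\cdot)^T)$, so that the subspace identified in Step 2 is literally a row separation subspace of the subproblem.
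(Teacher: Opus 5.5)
Your argument is correct, but it takes a different route from the paper. The paper gives no separate proof. It says the inclusion follows directly from the construction procedure for minimized subspaces in Theorem~3.11 of \cite{Possible}: the construction of $\mathcal{M}^{\{C\}}(v)$ involves only the blocks $D_j$ with $j\in\{C\}$, while that of $\mathcal{M}(v)$ involves all blocks, so the subproblem's subspace sits inside the full one.

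You work from the definitions instead. You restrict a solution realizing $\mathcal{M}(v)$ to the rows indexed by $\{C\}$, which gives a subproblem solution with the same $L^D$. The orthogonal splitting of $v$ across column blocks then puts $v$ in the restricted block subspace $\mathcal{M}(v)\cap\mathcal{R}(D_{\{C\}}^T)$. Minimality of $\mathcal{M}^{\{C\}}(v)$, whose existence you still take from \cite{Possible}, finishes the proof.

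What each approach buys:
\begin{itemize}
\item The paper's route is a one-liner once the constructive characterization is accepted, and it makes monotonicity read as ``fewer generators, smaller closure.''
\item Yours is self-contained and works for every $v\in\mathcal{R}(D_{\{C\}}^T)$, not only for $v$ in a single block.
\item Yours also shows, as a byproduct, that restricting any row separation subspace to a subset of blocks gives a row separation subspace of the subset subproblem. The paper uses exactly this fact ``by construction'', without proof, in the proof of Theorem~\ref{thm:subset exchange}.
\end{itemize}

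Two small points.
\begin{itemize}
\item Columns of a diagonal block that loses all its rows should be absorbed into a neighbouring block, or kept as a block with no rows, rather than discarded, since $L^D$ must remain $\hat{d}\times\hat{d}$.
\item The appeal to Corollary~\ref{cor:row separation subspace sum} is unnecessary. $\mathcal{M}(v)$ is realized by a single diagonal block by definition, and in any case several surviving blocks of the restricted solution can simply be merged into one.
\end{itemize}
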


\begin{remark}
\ref{cor:subset minimized} follows directly from Theorem~3.11 in~\cite{Possible} and formalizes the monotonicity of minimized subspaces with respect to restriction.
That is, restricting the row separation problem to a subset of row blocks can only reduce, but never enlarge, the minimized subspace associated with a given direction.
\end{remark}

\begin{remark}
Even if $\mathcal{I}$ is an indivisible subspace of the original row separation problem, the restricted subspace
\[
\bigoplus_{j_i\in \{C\}}\mathcal{R}\!\left(D_{j_i}^T\right)\cap \mathcal{I}
\]
is not necessary to be an indivisible subspace of the $\{C\}$-subset subproblem.
Moreover, for a given indivisible ambiguous subspace $\mathcal{J}^{\{C\}}$ of the $\{C\}$-subset subproblem, there may not exist an indivisible subspace $\mathcal{I}$ of the original row separation problem such that $\mathcal{J}^{\{C\}}\subseteq \mathcal{I}$.

\end{remark}

We conclude this subsection with a practical criterion that allows indivisible subspaces to be identified directly from the geometric structure of row separation subspaces.
Unlike the \ref{thm:indivisible to row separation subspace} and \ref{cor:subset minimized}, which are primarily structural and existential, the following theorem provides a concrete and verifiable condition that can be checked using only low-dimensional intersection information.

\begin{theorem}\label{thm:1 row separation subspace to indivisible}
Consider the row separation problem associated with the block set $\{D_1,\ldots,D_{k+1}\}$ and a nonzero vector $v\in \mathcal{R}\!\left(D_j^T\right)$ for some $j\in\left[1,\ldots,k+1\right]$. 
Suppose there exists a row separation subspace $\mathcal{S}$ such that
\[
\dim\!\left(\mathcal{S}\cap \mathcal{R}\!\left(D_j^T\right)\right)=1
\quad\text{and}\quad
v\in \mathcal{S}\cap \mathcal{R}\!\left(D_j^T\right).
\]
Then the minimized subspace $\mathcal{M}(v)$ is an indivisible subspace.
\end{theorem}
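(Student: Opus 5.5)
We will argue by contradiction against the decomposition of Theorem~\ref{thm:indivisible to row separation subspace}. The plan is to show that every vector $w\in\mathcal{M}(v)$ satisfies $\mathcal{M}(w)=\mathcal{M}(v)$, and then invoke Lemma~\ref{lem:indivisible minimized subspace}.

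First, since $v\in\mathcal{S}$ and $\mathcal{S}$ is a row separation subspace, minimality in Definition~\ref{def:minimized subspace} gives $\mathcal{M}(v)\subseteq\mathcal{S}$. Intersecting with $\mathcal{R}(D_j^T)$ and using $\dim(\mathcal{S}\cap\mathcal{R}(D_j^T))=1$, we obtain
\[
\mathcal{M}(v)\cap\mathcal{R}(D_j^T)=\mathrm{span}\{v\}.
\]
Next, apply Theorem~\ref{thm:indivisible to row separation subspace} to $\mathcal{M}(v)$. This gives pairwise orthogonal indivisible subspaces with $\mathcal{M}(v)=\bigoplus_{t=1}^g\mathcal{I}_t$.

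The key structural fact is that row separation subspaces are block-compatible. A row separation subspace arises as $\mathcal{R}(\overline{L^D}(B_{t'}^D)^T)$, and these rows are unitary recombinations $H_j D_j$ within each block. Hence every row separation subspace $\mathcal{T}$ satisfies
\[
\mathcal{T}=\bigoplus_{i}\bigl(\mathcal{T}\cap\mathcal{R}(D_i^T)\bigr).
\]
This is the property implicitly underlying Theorem~3.11 of \cite{Possible}. Granting it, the vector $v$ lies in $\mathcal{R}(D_j^T)\cap\mathcal{M}(v)=\bigoplus_t(\mathcal{I}_t\cap\mathcal{R}(D_j^T))$. Since the left-hand side is one-dimensional, exactly one summand, say $\mathcal{I}_1\cap\mathcal{R}(D_j^T)$, is nonzero, and it equals $\mathrm{span}\{v\}$. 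Therefore $v\in\mathcal{I}_1$. Minimality then gives $\mathcal{M}(v)\subseteq\mathcal{I}_1$, which forces $g=1$ and $\mathcal{M}(v)=\mathcal{I}_1$. Thus $\mathcal{M}(v)$ is indivisible.

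The main obstacle is justifying the block-compatibility decomposition $\mathcal{T}=\bigoplus_i(\mathcal{T}\cap\mathcal{R}(D_i^T))$ for row separation subspaces, and in particular for each indivisible summand $\mathcal{I}_t$. I would derive it from Definition~\ref{def:row separation problem}. Because $E^D$ is a permutation and $\mathrm{diag}\{H_i\}$ acts blockwise, each row of the block-diagonal result comes from a single transformed block $H_iD_i$. So the row space of a diagonal block $B_t^D$, pulled back by $\overline{L^D}$, is spanned by vectors each lying in some $\mathcal{R}(D_i^T)$.

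Care is needed with the conjugation and transpose conventions. One must check that $\overline{L^D}(B^D_{t'})^T$ spans the conjugate-transposed rows, namely vectors of the form $\overline{D_i^T}$-combinations. I would identify $\mathcal{R}(D_i^T)$ accordingly, possibly up to the conjugation convention used in \cite{Possible}.

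Should the indivisible summands $\mathcal{I}_t$ not obviously be block-compatible, a fallback is available. Take instead $\mathcal{I}_t$ to be minimized subspaces of basis vectors chosen blockwise, as in the construction of Theorem~3.15 and Corollary~3.20 of \cite{Possible}. This yields compatibility directly.
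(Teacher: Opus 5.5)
Your route is the paper's. The paper also decomposes $\mathcal{M}(v)$ by Theorem~\ref{thm:indivisible to row separation subspace} and takes an indivisible $\mathcal{I}\subset\mathcal{M}(v)$ with $\mathcal{I}\not\perp v$. It uses $1\ge\dim(\mathcal{M}(v)\cap\mathcal{R}(D_j^T))\ge\dim(\mathcal{I}\cap\mathcal{R}(D_j^T))\ge 1$ to get $v\in\mathcal{I}$, and closes with $\mathcal{M}(v)\subseteq\mathcal{I}$. It phrases this as a contradiction; your direct version is equivalent.

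There is, however, a non sequitur at the step you flag. The property you grant, $\mathcal{T}=\sum_i(\mathcal{T}\cap\mathcal{R}(D_i^T))$, does not yield $\mathcal{R}(D_j^T)\cap\mathcal{M}(v)=\bigoplus_t(\mathcal{I}_t\cap\mathcal{R}(D_j^T))$. For example, in $\mathbb{C}^2$ take $\mathcal{R}(D_1^T)=\mathrm{span}\{e_1+e_2\}$, $\mathcal{R}(D_2^T)=\mathrm{span}\{e_1\}$, $\mathcal{R}(D_3^T)=\mathrm{span}\{e_2\}$. The lines $\mathcal{I}_1=\mathrm{span}\{e_1\}$ and $\mathcal{I}_2=\mathrm{span}\{e_2\}$ satisfy your property. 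Yet $v=e_1+e_2$ spans $\mathcal{R}(D_1^T)\cap(\mathcal{I}_1\oplus\mathcal{I}_2)$ while lying in neither $\mathcal{I}_t$.

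What you actually need is the orthogonal-splitting property. The paper uses it tacitly when it passes from $\mathcal{I}\not\perp v$ to $\dim(\mathcal{I}\cap\mathcal{R}(D_j^T))\ge 1$. It says that for every row separation subspace $\mathcal{T}$ and every $i$,
\[
\mathcal{R}(D_i^T)=\bigl(\mathcal{R}(D_i^T)\cap\mathcal{T}\bigr)\oplus\bigl(\mathcal{R}(D_i^T)\cap\mathcal{T}^{\perp}\bigr),
\]
or equivalently, the orthogonal projector $P_{\mathcal{T}}$ maps $\mathcal{R}(D_i^T)$ into itself. This is exactly what fails for $D_1$ and the lines above, which is why they cannot be row separation subspaces there.

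Your derivation from Definition~\ref{def:row separation problem} gives this once you add one observation. In a solution realizing $\mathcal{T}$, the transposed rows of $H_iD_i$ assigned to the other diagonal blocks are orthogonal to $\mathcal{T}$. The reason is that after right multiplication by the unitary $L^D$ they have column supports disjoint from that of the block defining $\mathcal{T}$.

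With this in hand, write $v=\sum_t P_{\mathcal{I}_t}v$. Each term satisfies $P_{\mathcal{I}_t}v\in\mathcal{I}_t\cap\mathcal{R}(D_j^T)\subseteq\mathcal{M}(v)\cap\mathcal{R}(D_j^T)=\mathrm{span}\{v\}$. Mutual orthogonality of the $\mathcal{I}_t$ then leaves exactly one nonzero term, so $v\in\mathcal{I}_1$, and the rest of your argument stands. Your fallback via the earlier paper's constructions is then unnecessary.
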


\begin{proof}
Assume, by contradiction, that $\mathcal{M}(v)$ is not indivisible. 
Then, by \ref{thm:indivisible to row separation subspace}, there exists an indivisible subspace $\mathcal{I}$ such that $\mathcal{I}\not\perp v$ and $\mathcal{I}\subset \mathcal{M}(v)$. 
By the property of minimized subspaces (\ref{def:minimized subspace}), we obtain the chain of inequalities
\[
1 
= \dim\!\left(\mathcal{S}\cap \mathcal{R}\!\left(D_j^T\right)\right)
\geq \dim\!\left(\mathcal{M}(v)\cap \mathcal{R}\!\left(D_j^T\right)\right)
\geq \dim\!\left(\mathcal{I}\cap \mathcal{R}\!\left(D_j^T\right)\right)
\geq 1.
\]
Hence,
\[
\dim\!\left(\mathcal{I}\cap \mathcal{R}\!\left(D_j^T\right)\right)=1,
\]
which implies that $v\in \mathcal{I}$. Consequently, $\mathcal{M}(v)\subseteq \mathcal{I}$, contradicting the minimality of $\mathcal{M}(v)$. 
Therefore, $\mathcal{M}(v)$ must be indivisible.
\end{proof}

\begin{remark}
Theorem~\ref{thm:1 row separation subspace to indivisible} provides a concrete and verifiable criterion for certifying indivisibility directly from a solution of the row separation problem.

This property will be repeatedly exploited in the constructive procedures to be developed in later sections, where such low-dimensional intersection tests serve as practical tools for detecting irreducible structures.
\end{remark}

\subsection{Comparison Property}\label{sec:comparision basis}

In this subsection, we introduce a framework for comparing different row separation problems (also their corresponding core problems) and for formalizing inclusion and equivalence relations among them. While the previous subsection focused on the internal structure of a single row separation problem, the aim here is to understand how this structure behaves under changes in the underlying block representation.

More precisely, we address the following questions: when do distinct collections of row blocks induce exactly the same family of row separation subspaces? When can one row separation problem be regarded as a refinement or a simplification of another? And under which conditions can local structural information be transferred or recombined without altering the global decomposition?
Answering these questions is essential for replacing complex problems with structurally equivalent ones, for assembling global problems from simpler subproblems, and for justifying the exchange of block subsets used in later constructions.

Throughout this subsection and the remainder of the paper, the symbol $\mathsf{P}$ will be used to denote a generic row separation problem.

We begin by introducing a notion of equivalence that isolates the intrinsic structure of a row separation problem and disregards inessential differences in its representation.
Since the decomposition properties of interest depend only on the family of row separation subspaces generated by a problem, it is natural to regard two problems to be equivalent whenever they induce the same collection of such subspaces.

\begin{definition}[Equivalent with respect to Row Separation Subspaces]
\label{def:equivalent with row separation subspaces}
The row separation problems associated with the block sets 
$\{D_1,\ldots,D_{k+1}\}$ and $\{D'_1,\ldots,D'_{k'+1}\}$ are said to be 
\emph{equivalent with respect to row separation subspaces} if they have the same collection of row separation subspaces.
\end{definition}

\begin{remark}
This notion of equivalence abstracts away from the specific representation of the row blocks and retains only the induced row separation structure.
For example, the row separation problems associated with 
$\{D_1,\ldots,D_{k+1}\}$ and $\{D_1,\ldots,D_{k+1},D_1\}$ are equivalent in this sense, since duplicating a row block does not generate new row separation subspaces.
More generally, this equivalence allows one to replace a given row separation problem with an alternative but structurally identical formulation—possibly simpler or more convenient—while preserving all information relevant to decomposition, which is also the main idea of the strategy to be proposed in \ref{sec:division problem construction in complex number}.

\end{remark}

The next corollary shows that indivisible subspaces form a complete structural invariant for row separation problems, which is a direct extension of \ref{thm:indivisible to row separation subspace}.
This observation provides a strong justification for focusing on indivisible subspaces in the analysis and comparison of different row separation problems.

\begin{corollary}
    
\label{thm:same indivisible same row separation}
Let $\mathsf{P}_1$ be a row separation problem associated with the block set
$\{D_1,\ldots,D_{k+1}\}$ and let $\mathsf{P}_2$ be another row separation problem associated with the block set
$\{D'_1,\ldots,D'_{k'+1}\}$.  
If $\mathsf{P}_1$ and $\mathsf{P}_2$ have the same collection of indivisible subspaces, then they are equivalent with respect to row separation subspaces.
\end{corollary}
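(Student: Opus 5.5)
The plan is to show that the two collections $\{\mathcal{S}\}^{\mathrm{full}}_{\mathsf{P}_1}$ and $\{\mathcal{S}\}^{\mathrm{full}}_{\mathsf{P}_2}$ coincide by proving one inclusion; the other follows by symmetry. Take any row separation subspace $\mathcal{S}$ of $\mathsf{P}_1$. By Theorem~\ref{thm:indivisible to row separation subspace}, applied to $\mathsf{P}_1$, we can write
\[
\mathcal{S}=\bigoplus_{t=1}^{g}\mathcal{I}_t ,
\]
where the $\mathcal{I}_t$ are indivisible subspaces of $\mathsf{P}_1$ and $\mathcal{I}_t\perp\mathcal{I}_{t'}$ for $t\neq t'$. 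By hypothesis, each $\mathcal{I}_t$ is also an indivisible subspace of $\mathsf{P}_2$. In particular, each $\mathcal{I}_t$ is a row separation subspace of $\mathsf{P}_2$, since item~2 of Definition~\ref{def:indivisible subspace} requires it.

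Next I would reassemble $\mathcal{S}$ inside $\mathsf{P}_2$ by induction on $g$, using Corollary~\ref{cor:row separation subspace sum} for $\mathsf{P}_2$. The base case $g=1$ is immediate. For the inductive step, suppose $\mathcal{I}_1\oplus\cdots\oplus\mathcal{I}_{s}$ is already known to be a row separation subspace of $\mathsf{P}_2$. Because the $\mathcal{I}_t$ are pairwise orthogonal, this sum is orthogonal to $\mathcal{I}_{s+1}$. The corollary then shows that $\mathcal{I}_1\oplus\cdots\oplus\mathcal{I}_{s+1}$ is a row separation subspace of $\mathsf{P}_2$. At $s+1=g$ we obtain $\mathcal{S}\in\{\mathcal{S}\}^{\mathrm{full}}_{\mathsf{P}_2}$. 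Exchanging the roles of $\mathsf{P}_1$ and $\mathsf{P}_2$ gives the reverse inclusion, and therefore equivalence in the sense of Definition~\ref{def:equivalent with row separation subspaces}.

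The step that needs the most care is confirming that Corollary~\ref{cor:row separation subspace sum} may be applied within $\mathsf{P}_2$, since orthogonality is the only hypothesis it requires. That corollary relies on Theorem~\ref{thm:indivisible to row separation subspace} and the results of \cite{Possible}, and these hold for an arbitrary block set, so no compatibility between the two ambient spaces $\mathcal{R}(D^T)$ and $\mathcal{R}(D'^T)$ is needed. Even so, the orthogonal sum of subspaces of $\mathsf{P}_2$ automatically lies in $\mathcal{R}(D'^T)$.

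A last minor point concerns the degenerate zero/trivial components discussed in the remark after Definition~\ref{def:row separation problem}. The standing convention is that every $D_j$ has full row rank, and every row separation subspace is nonempty and hence decomposes into at least one indivisible piece. Under these conventions no row separation subspace escapes the decomposition argument.
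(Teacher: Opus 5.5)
Your proposal is correct and matches the paper's intended argument. The paper gives no separate proof; it presents this corollary as a direct extension of Theorem~\ref{thm:indivisible to row separation subspace}. That is exactly your route: decompose each row separation subspace into pairwise orthogonal indivisible subspaces, reassemble it in the other problem using the orthogonal-sum closure of Corollary~\ref{cor:row separation subspace sum}, and obtain the reverse inclusion by symmetry.
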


We now introduce a relation that allows row separation problems to be compared in terms of the amount of structural information they encode.
This relation induces a partial order on the class of row separation problems and also, their corresponding core problems (see \ref{def:row separation problem constructed by core problem} for detail), based on inclusion of their row separation subspaces, and provides a formal way to distinguish problems that admit strictly more refined decompositions than others.

\begin{definition}[Dense Relationship]\label{def:dense problem}
Let $\mathsf{P}_1$ and $\mathsf{P}_2$ be two row separation problems.
We say that $\mathsf{P}_1$ is \emph{denser} than $\mathsf{P}_2$ if every row separation subspace of $\mathsf{P}_2$ is also a row separation subspace of $\mathsf{P}_1$.
\end{definition}

\begin{remark}
If $\mathsf{P}_1$ is denser than $\mathsf{P}_2$ and $\mathsf{P}_2$ is denser than $\mathsf{P}_1$, then the two problems admit exactly the same collection of row separation subspaces.
More generally, the dense relationship allows one to compare non-equivalent problems and to formalize when one problem strictly refines the row separation structure of another.
\end{remark}

\begin{remark}
A row separation problem in $\mathcal{F}$ is always denser than the similar problem in $\mathbb{R}$ constructed by the same real row blocks collections. The strategy proposed in \ref{sec:division problem construction in complex number} aims to find complex indivisible subspaces. With the density relation, one can also utilize the complex subspaces to form the real ones, especially the unambiguous ones.
\end{remark}

We next introduce a construction that allows one to assemble a global row separation problem from several smaller ones, while preserving their local row separation structures.
This operation plays a key conceptual role in the sequel: it provides a formal mechanism for reconstructing a full problem from independently analyzed subproblems.

\begin{definition}[Row Block Composition]\label{def:row block composition}
Let $\{\mathsf{P}_1,\ldots,\mathsf{P}_l\}$ be a collection of row separation problems, where $\mathsf{P}_s$ has $k_s+1$ row blocks for $s=1,\ldots,l$.
The \emph{row block composition} of these problems is a row separation problem $\mathsf{P}^{\mathrm{Com}}$ with
\[
\sum_{s=1}^{l} k_s + l
\]
row blocks, such that:
\begin{enumerate}
\item the $\left(1,\ldots,k_1+1\right)$-subset subproblem of $\mathsf{P}^{\mathrm{Com}}$ is equivalent to $\mathsf{P}_1$ up to a unitary (orthogonal when $\mathbb{F}=\mathbb{R}$) transformation;
\item[] \hspace{0.5cm} $\vdots$\hfill 

\medskip 

\item the $\{\sum_{s=1}^{l-1} k_s + l,\ldots,\sum_{s=1}^{l} k_s + l\}$-subset subproblem of $\mathsf{P}^{\mathrm{Com}}$ is equivalent to $\mathsf{P}_l$ up to a unitary (orthogonal when $\mathbb{F}=\mathbb{R}$) transformation.
\end{enumerate}
\end{definition}

We conclude this section with a result that synthesizes the comparison framework developed above and makes precise when subset subproblems can be replaced without altering the global row separation structure.
The result provides the theoretical justification for modifying or reconstructing parts of a row separation problem while maintaining structural equivalence, and will serve as a key tool in the alternative constructions developed in \ref{sec:alternative construction}.

\begin{theorem}[Subset Exchange]\label{thm:subset exchange}
Let $\mathsf{P}_1$ be a row separation problem associated with the block set
$\{D_1,\ldots,D_{k+1}\}$.
Let $\{C_1\}$ and $\{C_2\}$ be two index subsets such that
\[
\{C_1\} \cup \{C_2\} = \{1,\ldots,k+1\},
\qquad
\{C_1\} \cap \{C_2\} = \emptyset.
\]

Assume that $\mathsf{P}_2$ is a row separation problem whose indivisible subspaces coincide with those of the $\{C_1\}$-subset subproblem of $\mathsf{P}_1$.
Let $\mathsf{P}_3$ be the row block composition of $\mathsf{P}_2$ and the $\{C_2\}$-subset subproblem of $\mathsf{P}_1$.

Then $\mathsf{P}_3$ and $\mathsf{P}_1$ are equivalent with respect to row separation subspaces.
\end{theorem}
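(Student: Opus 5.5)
By Corollary~\ref{thm:same indivisible same row separation}, it is enough to show that $\mathsf{P}_1$ and $\mathsf{P}_3$ have the same collection of indivisible subspaces. Since row separation subspaces are built from these atoms (Theorem~\ref{thm:indivisible to row separation subspace} and Corollary~\ref{cor:row separation subspace sum}), I will instead prove the slightly stronger, more elementary fact that the two problems have the same row separation subspaces. I will do this by describing a row separation subspace of a problem in terms of those of its subset subproblems.

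The key intermediate claim is a splitting lemma. Fix a partition $\{C_1\}\cup\{C_2\}$ of the block indices. Then a subspace $\mathcal{S}$ is a row separation subspace of the full problem if and only if it satisfies a compatibility condition linking the two subset subproblems. The condition is that $\mathcal{S}$ meets each block range $\mathcal{R}(D_j^T)$ compatibly, i.e.\ $\mathcal{R}(D_j^T)=(\mathcal{S}\cap\mathcal{R}(D_j^T))\oplus(\mathcal{S}^{\perp}\cap\mathcal{R}(D_j^T))$ for every $j$. This is the natural reformulation of Definition~\ref{def:row separation problem}: the unitary $H_j$ splits the rows of $D_j$, and $L^D$ splits the column space.

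The condition is blockwise. So the family of row separation subspaces of the full problem is determined by the families of row separation subspaces of the $\{C_1\}$- and $\{C_2\}$-subset subproblems, namely the families of compatible splittings of $\bigoplus_{j\in C_1}\mathcal{R}(D_j^T)$ and $\bigoplus_{j\in C_2}\mathcal{R}(D_j^T)$ respectively. Here one must be careful about the ambient space: the subspace sums of the two parts generally overlap, so the splitting must be stated for the whole space $\mathcal{R}(D^T)$. A subspace $\mathcal{S}$ of the full problem is exactly one that splits every block in $C_1$ and every block in $C_2$.

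With this characterization, the argument runs as follows. By hypothesis and Corollary~\ref{thm:same indivisible same row separation}, $\mathsf{P}_2$ is equivalent to the $\{C_1\}$-subset subproblem of $\mathsf{P}_1$. By Definition~\ref{def:row block composition}, the subproblems of $\mathsf{P}_3$ on its first and second index groups are equivalent, up to a common unitary transformation, to $\mathsf{P}_2$ and to the $\{C_2\}$-subset subproblem of $\mathsf{P}_1$. Applying the splitting lemma to both $\mathsf{P}_1$ and $\mathsf{P}_3$ then gives identical families of row separation subspaces.

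The main obstacle is that equivalence with respect to row separation subspaces does not obviously determine the block ranges $\mathcal{R}(D_j^T)$ themselves, and the splitting lemma is stated in terms of those ranges. I would handle this by reformulating the lemma purely in terms of subspace families. A subspace $\mathcal{S}$ is a row separation subspace of the full problem if and only if $\mathcal{S}=\mathcal{S}_1+\mathcal{S}_2$, where $\mathcal{S}_i$ is a row separation subspace of the $\{C_i\}$-subproblem, and where $\mathcal{S}^{\perp}$ decomposes the same way with complementary pieces.

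I would derive this from the minimized-subspace construction of \cite{Possible}, Theorem~3.11. That construction builds $\mathcal{M}(v)$ by alternately closing under the subproblem minimized subspaces, using Corollary~\ref{cor:subset minimized}. Since this procedure uses only the subproblem families, the minimized subspaces, and hence the indivisible subspaces via Lemma~\ref{lem:indivisible minimized subspace}, coincide for $\mathsf{P}_1$ and $\mathsf{P}_3$.
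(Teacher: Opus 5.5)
Your architecture matches the paper's: restrict a row separation subspace to its $\{C_1\}$- and $\{C_2\}$-parts, swap the $\{C_1\}$-family via the corollary on equal indivisible subspaces, and recombine. However, the lemma that carries it is in one form false and in the other unproved.

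\textbf{The blockwise splitting lemma is false.} The condition $\mathcal{R}(D_j^T)=(\mathcal{S}\cap\mathcal{R}(D_j^T))\oplus(\mathcal{S}^{\perp}\cap\mathcal{R}(D_j^T))$ makes $X_j=\{h: D_j^Th\in\mathcal{S}\}$ and $Y_j=\{h: D_j^Th\in\mathcal{S}^\perp\}$ complementary. But they are orthogonal only for the form $y^*\bar D_jD_j^Tx$. The rows of $H_jD_j$ are the vectors $D_j^TH_j^Te_i$ with $H_j^Te_i$ orthonormal. So a unitary $H_j$ realizing the split exists only if $X_j\perp Y_j$ in the standard inner product. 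That is, you additionally need $\mathcal{S}\cap\mathcal{R}(D_j^T)$ to be invariant under $D_j^T\bar D_j$. Concretely, take the single block $D_1=\begin{bmatrix}1&0\\1&1\end{bmatrix}$ and $\mathcal{S}=\mathrm{span}\{e_1\}$. Your condition holds. But any solution exhibiting a one-dimensional row separation subspace makes $H_1D_1L^D$ diagonal up to permutation, i.e.\ it is an SVD of $D_1$. Since $e_1$ is not an eigenvector of $D_1^TD_1=\begin{bmatrix}2&1\\1&1\end{bmatrix}$, $\mathcal{S}$ is not a row separation subspace. Your lemma is valid when every $\bar D_jD_j^T$ is a multiple of the identity (the $1$-division situation), but the theorem assumes nothing of the kind.

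\textbf{The family-level reformulation is right but not proved.} Write $\mathcal{R}_i=\sum_{j\in\{C_i\}}\mathcal{R}(D_j^T)$. Your claim is that $\mathcal{S}$ is a row separation subspace iff $\mathcal{S}=\mathcal{S}_1+\mathcal{S}_2$ with $\mathcal{S}_i$ in the $\{C_i\}$-family (or zero) and $\mathcal{R}_i\ominus\mathcal{S}_i\perp\mathcal{S}$. This is correct, and it is exactly what the paper uses tacitly when it asserts that $\mathcal{S}^{\mathsf{P}_2}\oplus\mathcal{S}^{\{C_2\}}_{\mathsf{P}_1}$ is a row separation subspace of $\mathsf{P}_3$ ``by construction''. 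Your proposed derivation does not deliver it:
\begin{itemize}
\item Nothing in this paper supports describing Theorem~3.11 of Yu--Jing as ``alternately closing under subproblem minimized subspaces''. The corollary on subset minimized subspaces gives only the inclusion $\mathcal{M}^{\{C\}}(v)\subseteq\mathcal{M}(v)$.
\item Such a closure cannot even act on vectors lying in neither $\mathcal{R}_1$ nor $\mathcal{R}_2$ unless you also close under the orthogonal projections onto $\mathcal{R}_i$. That requirement is the complementary condition itself.
\end{itemize}
So ``uses only the subproblem families'' is the thing to be proved, not a consequence.

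\textbf{A direct proof.} The argument is short and avoids the unitarity issue, because the $H_j$ are inherited from existing solutions.
\begin{itemize}
\item If $\mathcal{S}$ is a row separation subspace of the full problem, delete from a solution the rows with $j\notin\{C_i\}$. This shows $\mathcal{S}_i=\sum_{j\in\{C_i\}}\mathcal{S}\cap\mathcal{R}(D_j^T)$ lies in the $\{C_i\}$-family, with $\mathcal{R}_i\ominus\mathcal{S}_i=\sum_{j\in\{C_i\}}\mathcal{S}^\perp\cap\mathcal{R}(D_j^T)$.
\item Conversely, the unitary $H_j$ from the two subproblem solutions place every row $D_j^TH_j^Te_i$ either in $\mathcal{S}_i\subseteq\mathcal{S}$ or in $\mathcal{R}_i\ominus\mathcal{S}_i\subseteq\mathcal{S}^\perp$. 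Hence any $L^D$ with $\overline{L^D}$ adapted to $\mathcal{S}\oplus\mathcal{S}^\perp$ gives a solution of the full problem having $\mathcal{S}$ as a block.
\end{itemize}
Since $\mathcal{R}_i$ is the largest member of the $\{C_i\}$-family, this characterization depends only on the two families. Apply it to $\mathsf{P}_1$ and $\mathsf{P}_3$, using the common placement you assume. (The definition of row block composition does not literally give this placement, but the paper also presupposes it.) This yields both inclusions at once, whereas the paper writes out only the inclusion from $\mathsf{P}_1$ into $\mathsf{P}_3$. Your final detour through minimized and indivisible subspaces then becomes unnecessary.
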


\begin{proof}
Let $\mathcal{S}^{\mathsf{P}_1}$ be an arbitrary row separation subspace of $\mathsf{P}_1$.
Define its restriction to the $\{C_1\}$-subset as
\[
\mathcal{S}^{\{C_1\}}_{\mathsf{P}_1}
=
\mathcal{S}^{\mathsf{P}_1}
\cap
\left(
\bigoplus_{j \in \{C_1\}} \mathcal{R}(D_j^T)
\right).
\]
By construction, $\mathcal{S}^{\{C_1\}}_{\mathsf{P}_1}$ is a row separation subspace of the $\{C_1\}$-subset subproblem of $\mathsf{P}_1$.

Since $\mathsf{P}_2$ has the same indivisible subspaces as this $\{C_1\}$-subset subproblem, \ref{thm:same indivisible same row separation} guarantees the existence of a row separation subspace $\mathcal{S}^{\mathsf{P}_2}$ of $\mathsf{P}_2$ such that
\[
\mathcal{S}^{\mathsf{P}_2}
=
\mathcal{S}^{\{C_1\}}_{\mathsf{P}_1}.
\]

Similarly, define the restriction of $\mathcal{S}^{\mathsf{P}_1}$ to the $\{C_2\}$-subset as
\[
\mathcal{S}^{\{C_2\}}_{\mathsf{P}_1}
=
\mathcal{S}^{\mathsf{P}_1}
\cap
\left(
\bigoplus_{j \in \{C_2\}} \mathcal{R}(D_j^T)
\right).
\]

Now consider the subspace
\[
\mathcal{S}^{\mathsf{P}_3}
=
\mathcal{S}^{\mathsf{P}_2}
\oplus
\mathcal{S}^{\{C_2\}}_{\mathsf{P}_1}.
\]
By construction of the row block composition, $\mathcal{S}^{\mathsf{P}_3}$ is a row separation subspace of $\mathsf{P}_3$.

Finally, since $\{C_1\} \cup \{C_2\} = \{1,\ldots,k+1\}$ and the two index sets are disjoint, we obtain
\[
\mathcal{S}^{\mathsf{P}_3}
=
\mathcal{S}^{\mathsf{P}_1}.
\]
Because $\mathcal{S}^{\mathsf{P}_1}$ is arbitrary, $\mathsf{P}_3$ and $\mathsf{P}_1$ admit the same collection of row separation subspaces and are therefore equivalent with respect to row separation subspaces.
\end{proof}

The results of this section establish that the global structure of a row separation problem is entirely determined by its indivisible subspaces. 
Moreover, this structure can be recovered from suitably chosen $\{C\}$-subset subproblems and remains invariant under controlled exchanges of such subproblems.
As a summary, these results provide a flexible and robust framework for analyzing and reconstructing row separation structures independently of the specific representation of the row blocks.

\section{Division Problem Theory}\label{sec:division problem theory}

Section 3 showed that reducible TLS core problems can be analyzed through indivisible directions, providing a local structural description. But there is still not a systematic procedure for identifying these indivisible directions even in a simpler sub-problem.
To address this issue, we recall the concepts of $1$ and $2$-division problems introduced in \cite{Possible} and further extend them to $c$-division in real or complex field while reserving the intrinsic properties. 
In particular, we clarify the role of strict and loose behavior, identify the admissible configurations within the c-division framework, and thereby provide a structured setting in which global consistency can be studied. This analysis prepares the ground for the subspace equivalence with general row separation problems and the $c$-division problems developed in Section 5. 

Division problems form a structured subclass of row separation problems in which uniform decomposition properties are enforced on collections of row blocks of bounded size. The parameter $c$ measures the level at which such homogeneity constraints are imposed: loosely speaking, all subproblems involving fewer than $c$ row blocks are required to exhibit compatible structural behavior. To build intuition and fix notation, we begin by recalling the low-order cases $c \leq 3$, which were introduced and analyzed in~\cite{Possible}. These cases display increasingly strong structural rigidity, are easy to recognize, and serve as the foundation for the general theory. We then extend the notion of division to the case $c \geq 4$ and analyze the new structural phenomena that arise in this setting.

\subsection{Division Problem of $c\leq 3$}\label{sec:$3$-division}

The following definition describes a $1$-division problem, which corresponds to the simplest case in which unambiguous subspaces are represented by single-row blocks.

\begin{definition}[$1$-division Problem in $\mathbb{F}$ \cite{Possible}]\label{def:1d}
The row separation problem associated with the block set $\{D_1,\ \cdots,\ D_{k+1}\}$ is called a $1$-division problem if the following conditions hold:
\begin{enumerate}
\item For all $j\in [1,\ \cdots,\ k+1]$, the matrix $\bar{D}_j D_j^T$ has exactly one eigenvalue.
\item For all $j\in \{1,\ \cdots,\ k+1\}$, $\operatorname{rank}(D_j)\geq 2$.
\end{enumerate}
\end{definition}

The following definition introduces a graph-based representation of orthogonality relations among row blocks, which will be used to characterize connection

\begin{definition}[Total Covariance Matrix in $\mathbb{F}$ \cite{Possible}]\label{def:total covariance matrix}
For the row separation problem associated with the block set $\{D_1,\ \cdots,\ D_{k+1}\}$, the \emph{total covariance matrix} $\mathrm{Dov}$ is the $(k+1)\times(k+1)$ symmetric matrix defined by
\begin{enumerate}
\item $\mathrm{Dov}_{j,j}=1$, for $j=1,\ \cdots,\ k+1$;
\item $\mathrm{Dov}_{j,j'}=0$ if $\bar{D}_j D_{j'}^{T}$ is a zero matrix, for $j\neq j'$;
\item $\mathrm{Dov}_{j,j'}=1$ if $\bar{D}_j D_{j'}^{T}$ is a nonzero matrix, for $j\neq j'$.
\end{enumerate}
\end{definition}

The following definition of a $2$-division problem extends the notion of division by treating each row block as a coherent unit rather than as isolated rows.

\begin{definition}[$2$-division Problem in $\mathbb{F}$ \cite{Possible}]
\label{def:2d}
The row separation problem associated with the row block set
$\{D_1,\ \cdots,\ D_{k+1}\}$ is called a $2$-division problem if the following
conditions hold:
\begin{enumerate}
\item It is a $1$-division problem.

\item For all $j,j' = 1,\ \cdots,\ k+1$, the matrix
$\bar{D}_j D_{j'}^{T}$ has exactly one singular value.

\item The graph induced by the total covariance matrix $\mathrm{Dov}$ is
connected; that is, for any indices $j,j' \in \{1,\ldots,k+1\}$, there exists
a finite sequence of indices $(\ell_0,\ell_1,\ldots,\ell_m)$ such that
\[
\ell_0 = j,\quad \ell_m = j',
\qquad\text{and}\qquad
\mathrm{Dov}_{\ell_t,\ell_{t+1}} = 1
\quad \text{for all } t = 0,\ldots,m-1.
\]
\end{enumerate}
\end{definition}

Remark. Condition~(3) ensures that the row separation problem cannot be decomposed into independent, mutually orthogonal subproblems. In particular, for any $2$-division problem, one has $r_1=\cdots=r_{k+1}$, and for all $i,j$ the matrix $\bar{D}_j D_i^{T}$ is a unitary matrix (orthogonal matrix when $\mathbb{F}=\mathbb{R}$) multiplied by a scalar. 

We now introduce the notion of a $3$-division problem, which further strengthens the homogeneity conditions by enforcing compatibility across all triples of row blocks.

\begin{definition}[$3$-division Problem in $\mathbb{F}$ \cite{Possible}]\label{def:3d}
The row separation problem associated with the block set $\{D_1,\ \cdots,\ D_{k+1}\}$ is called a $3$-division problem if the following conditions hold:
\begin{enumerate}
\item It is a $2$-division problem.
\item For all $j,j',j''=1,\ \cdots,\ k+1$, the matrix
\[
D_j^{T}\bar{D}_j\, D_{j''}^{T}\bar{D}_{j''}\, D_{j'}^{T}\bar{D}_{j'}\, D_j^{T}\bar{D}_j,
\] 
has at most one nonzero eigenvalue.
\end{enumerate}
\end{definition}

\begin{remark}
Unlike $1$- and $2$-division problems, the definition of a $3$-division problem imposes constraints on eigenvalues rather than on singular values. This represents a substantially stronger spectral requirement. As a consequence, row separation problems arising from purely orthogonal matrices $H_j$
do not generically satisfy the $3$-division conditions, except in highly structured or
degenerate configurations.
\end{remark}

In general, these additional and increasingly restrictive conditions refine the structural homogeneity imposed by the $1$- and $2$-division cases. In particular, the $3$-division problem inherits all preceding properties while enforcing strong spectral constraints on triple interactions among row blocks. These results in a more rigid structural setting, making $3$-division problems a natural starting point for extending the theory to general $c$-division problems, as developed in the next subsection.

\subsection{General $c$-Division Problems for $c \geq 4$}
\label{sec:c-division}
We now extend the notion of division problems to the general cases $c \geq 4$. 
While the low-order cases $c \leq 3$ impose strong homogeneity constraints and lead to highly rigid structures, the general setting exhibits additional structural phenomena that cannot be captured by a direct extension of the low-order definitions. 
In particular, new types of internal configurations arise when considering interactions among larger collections of row blocks, requiring a more careful and systematic analysis.

This subsection is organized as follows. 
In \ref{sec:c in c}, we present the definition and basic properties of $c$-division problems, including the loose case. 
In \ref{sec:Cov in c}, we introduce algebraic quantities that describe relations between row blocks within a $c$-division problem. 
Finally, in \ref{sec:c+1 in c}, we analyze the conditions under which a $c$-division problem fails to extend to a $(c+1)$-division problem, thereby identifying structurally exceptional configurations.

\medskip
\noindent
Throughout this subsection, index collections are treated at the level of unordered
subsets of row blocks. Order-dependent constructions will be introduced explicitly
in subsequent subsections whenever required.

\subsubsection{Basic Analysis of $c$-Division Problems}
\label{sec:c in c}

In this Subsubsection, we present the definitions and basic structural properties of general $c$-division problems, including the loose case. 
The notion of $c$-division is defined recursively so as to inherit the essential constraints of the $1$- and $2$-division cases, while also satisfying the properties established in \ref{sec:supplement}.

The definition of division problems is motivated by the structural characterization of indivisible row separation subspaces in \ref{thm:1 row separation subspace to indivisible} and by the compatibility condition introduced in the second requirement of \ref{def:2d}. 
The recursive formulation provides a systematic way to control the internal structure of row blocks as the order $c$ increases.

\begin{definition}[$c$-division Problem in $\mathbb{F}$]
\label{def:c-division}
The row separation problem associated with the block set 
$\{D_1, \ \cdots, \ D_{k+1}\}$ is called a $c$-division problem ($c \geq 4$) if the following conditions are satisfied:
\begin{enumerate}
\item It is a $(c-1)$-division problem.
\item For any non-repetitive and ordered index collection $\left(j_1,\ \cdots,\ j_c\right)$, let $\{C\}=\{j_1,\ \cdots,\ j_c\}$. 
And for any 
$v\in\mathcal{R}\!\left(D^T_{j_1}\right)$, there exists a row separation subspace 
$\mathcal{S}^{\{C\}}$ of the $\{C\}$-subset subproblem such that 

\[
v\in\mathcal{S}^{\{C\}},
\qquad
\dim\!\left(\mathcal{S}^{\{C\}} \cap \mathcal{R}\!\left(D^T_{j_i}\right)\right)=1,
\qquad i=1,\ \cdots,\ c.
\]
\end{enumerate}
\end{definition}

\begin{remark}
For a $c$-division problem, the homogeneity level $c$ manifests at the level of
$\{C\}$-subset subproblems associated with index sets of cardinality $c$.
\end{remark}

\begin{example}
\label{ex:c-division example}
Let $D_1$ be an orthogonal matrix of dimension $\hat{d}$.
Then the row separation problem associated with the block set
$\{a_1D_1,\ \cdots,\ a_{k+1}D_1\}$ is a $(k+1)$-division problem.
In particular, it is a complete division problem in the sense of
\ref{def:complete division problem}.
\end{example}

\begin{definition}[Complete Division Problem in $\mathbb{F}$]
\label{def:complete division problem}
The row separation problem associated with the block set
$\{D_1, \ \cdots, \ D_{k+1}\}$ is called a complete division problem if it is a
$(k+1)$-division problem.
\end{definition}

\begin{remark}
Complete division problems represent the fully homogeneous endpoint of the
division hierarchy.

\end{remark}

We now introduce a relaxed version of the division framework, referred to as a
\emph{loose $c$-division problem}. 
Loose $c$-division problems retain the essential structural features of
$c$-division problems while allowing for limited violations of strict global
connectivity. In practice, strict and loose division problems will often be treated together
and collectively referred to as \emph{(loose) $c$-division problems}. 

\begin{definition}[Loose $c$-division Problem in $\mathbb{F}$]
\label{def:loose c-division}
A row separation problem associated with the block set
$\{D_1, \ \cdots, \ D_{k+1}\}$ ($k\geq 2$) is called a \emph{loose $c$-division
problem} ($c\geq 3$) if the following conditions hold:
\begin{enumerate}
\item It is a $1$-division problem.

\item For all $j,j' = 1,\ \cdots,\ k+1$, the matrix
$\bar{D}_j D_{j'}^{T}$ has exactly one singular value.

\item For any non-repetitive and ordered index collection $\left(j_1,\ \cdots,\ j_c\right)$, let $\{C\}=\{j_1,\ \cdots,\ j_c\}\subset\{1,\ldots,k+1\}$. 
For any
$v\in\mathcal{R}\!\left(D^T_{j_{1}}\right)$, there exists a row separation subspace
$\mathcal{S}^{\{C\}}$ of the $\{C\}$-subset subproblem such that
\[
v\in\mathcal{S}^{\{C\}},
\qquad
\dim\!\left(
\mathcal{S}^{\{C\}} \cap \mathcal{R}\!\left(D^T_{j_{i}}\right)
\right)=1,
\quad \forall\, i=1,\ \cdots,\ c.
\]
\end{enumerate}
\end{definition}

The following result shows that loose $c$-division problems admit a natural
structural decomposition into independent division subproblems.
This property will play a key role in simplifying the analysis in
\ref{sec:c+1 in c}.

\begin{corollary}
\label{thm:loose c-division splitting}
Let a loose $c$-division problem ($c\geq 3$) be associated with the block set
$\{D_1, \ \cdots, \ D_{k+1}\}$. 
Then there exist index subsets $\{C_1\},\ \cdots,\ \{C_g\}$ ($g\geq 2$) such that:
\begin{enumerate}
\item $\bigcup_{t=1}^{g} \{C_t\} = \{1,\ \cdots,\ k+1\}$;
\item $\{C_t\} \cap \{C_{t'}\} = \varnothing$ for $t\neq t'$;
\item for each $t=1,\ \cdots,\ g$, the $\{C_t\}$-subset subproblem is either a $c$-division problem or a complete division problem;
\item for $t\neq t'$, all row blocks indexed by $\{C_t\}$ and $\{C_{t'}\}$ are
mutually orthogonal, i.e.,
\[
\bar{D}_{j'} D_j^{T} = 0,
\qquad
j\in\{C_t\},\; j'\in\{C_{t'}\}.
\]
\end{enumerate}
\end{corollary}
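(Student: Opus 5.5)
The plan is to take the $\{C_t\}$ to be the connected components of the graph induced by the total covariance matrix $\mathrm{Dov}$ of Definition~\ref{def:total covariance matrix}, and then check items (1)–(4) of the statement one at a time.

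\textbf{Items (1), (2) and (4).} Items (1) and (2) hold automatically for the vertex sets of connected components. Item (4) holds because no edge joins two different components, so $\mathrm{Dov}_{j,j'}=0$ there, i.e. $\bar D_{j'}D_j^T=0$.

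\textbf{Why $g\ge 2$.} I read the adjective ``loose'' as meaning that the connectivity condition~(3) of Definition~\ref{def:2d} fails; otherwise the problem would already be a strict division problem and nothing would need to be split. Under this reading, $\mathrm{Dov}$ has at least two components.

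\textbf{Item (3): the componentwise division property.} Fix a component $\{C_t\}$ of cardinality $c_t$. Conditions (1) and (2) of Definition~\ref{def:loose c-division} restrict to $\{C_t\}$, and connectivity holds by construction, so the $\{C_t\}$-subset subproblem is a $2$-division problem. It remains to obtain the higher-order conditions of Definition~\ref{def:c-division} (and Definition~\ref{def:3d}) for every order $c'$ with $3\le c'\le \min(c,c_t)$.

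\textbf{Main step: transferring condition (3) to the component.} The key observation is that the $\{C'\}$-subset subproblem, for $\{C'\}\subseteq\{C_t\}$, is obtained from a $\{C\}$-subset subproblem with $\{C\}\supseteq\{C'\}$ and $|\{C\}|=c$, by deleting blocks.

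\emph{Choosing the enlarged index set.} Whenever possible, pad $\{C'\}$ with indices taken from other components. Those blocks are orthogonal to every block in $\{C'\}$, so the $\{C\}$-subset problem splits orthogonally. A row separation subspace $\mathcal S^{\{C\}}$ containing $v$ then decomposes orthogonally, via Theorem~\ref{thm:indivisible to row separation subspace}. I will use Corollary~\ref{cor:subset minimized} together with this orthogonal splitting to show that
\[
\mathcal S^{\{C\}}\cap\bigoplus_{j\in\{C'\}}\mathcal R(D_j^T)
\]
is a row separation subspace of the $\{C'\}$-subproblem. It still meets each $\mathcal R(D_{j_i}^T)$ in dimension one.

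\emph{Where I expect the difficulty.} The delicate point is that a restriction of a row separation subspace need not be one in general, as the remark after Corollary~\ref{cor:subset minimized} warns. I therefore plan to exploit orthogonality across components, which makes the restriction to the union of blocks of $\{C'\}$ clean. For the blocks of $\{C'\}$ that share a component with the padding, the $2$-division rigidity is what controls the restriction: each $\bar D_jD_{j'}^T$ is a scalar multiple of a unitary matrix.

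\textbf{The threshold $c_t\le c$ versus $c_t>c$.} If $c_t\le c$, this argument gives every order up to $c_t$, so the subproblem is a complete division problem in the sense of Definition~\ref{def:complete division problem}. If $c_t>c$, it gives every order up to $c$, so the subproblem is a $c$-division problem.

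\textbf{Main obstacle.} I expect the orders $c'<c$ inside a single large component to be the hardest case, since padding from the same component is no longer orthogonal. Here I would first try an induction downward in $c'$: take an $\mathcal S^{\{C\}}$ with one-dimensional intersections and intersect it with the minimized subspaces of Definition~\ref{def:minimized subspace}. I would then invoke Theorem~\ref{thm:1 row separation subspace to indivisible}, which says $\mathcal M(v)$ is indivisible, to conclude that deleting a block leaves a row separation subspace of the smaller subset subproblem.
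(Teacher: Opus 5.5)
The paper states this corollary without a separate proof. Taking the $\{C_t\}$ to be the connected components of the $\mathrm{Dov}$ graph is the natural route, and your treatment of items (1), (2), (4) is fine. So is $g\ge 2$, which does require your reading of ``loose'' as ``connectivity fails'', since the literal Definition~\ref{def:loose c-division} also admits connected problems.

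The genuine gap is in item~(3). The step you flag as the main obstacle is left open, and the tool you propose for it does not reach it. Theorem~\ref{thm:1 row separation subspace to indivisible} only certifies that $\mathcal M(v)$ is indivisible inside one fixed problem; it says nothing about what survives when blocks are deleted. Also, the remark after Corollary~\ref{cor:subset minimized} warns that \emph{indivisibility} can be lost under restriction, not the row-separation property itself.

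The missing observation is elementary, and it makes your distinction between padding from other components and padding from the same component unnecessary. Let $\mathcal S$ be a row separation subspace of the $\{C\}$-subproblem. In a solution realizing it, every row of $H_jD_j$ lies (in original coordinates) in $\mathcal S$ or in $\mathcal S^\perp$, so
\[
\mathcal R(D_j^T)=\bigl(\mathcal R(D_j^T)\cap\mathcal S\bigr)\oplus\bigl(\mathcal R(D_j^T)\cap\mathcal S^{\perp}\bigr),\qquad j\in\{C\}.
\]
For $\{C'\}\subseteq\{C\}$ put $\mathcal S'=\sum_{j\in\{C'\}}\bigl(\mathcal R(D_j^T)\cap\mathcal S\bigr)$. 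Since $\mathcal S'\subseteq\mathcal S$, for every $j\in\{C'\}$ you get $\mathcal R(D_j^T)\cap\mathcal S'=\mathcal R(D_j^T)\cap\mathcal S$ and $\mathcal R(D_j^T)\cap\mathcal S^\perp\subseteq\mathcal S'^{\perp}$. Keep the same $H_j$ for $j\in\{C'\}$ and replace $L$ by a unitary matrix adapted to $\mathcal S'\oplus\mathcal S'^{\perp}$. This block-diagonalizes the $\{C'\}$-subproblem and makes $\mathcal S'$ a row separation subspace of it. That subspace contains $v$ and has the same one-dimensional intersections with each $\mathcal R(D_j^T)$, $j\in\{C'\}$. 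No orthogonality between $\{C'\}$ and the padding indices is used. This is exactly the fact the paper uses without comment in the proof of Theorem~\ref{thm:subset exchange}.

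With this in hand, padding arbitrarily turns the level-$c$ condition into the level-$c'$ condition on every $c'$-subset, for all $c'\le c$. That needs $k+1\ge c$; otherwise condition~(3) of Definition~\ref{def:loose c-division} is vacuous and forces nothing at levels $\ge 3$. Together with connectivity of each component, this yields a $c$-division problem, or a complete division problem when $c_t\le c$.

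You also skip the fact that level~$3$ (Definition~\ref{def:3d}) is spectral, so the subspace condition must be converted. Under $1$-division, $D_i^T\bar D_i$ is a positive multiple of the orthogonal projector onto $\mathcal R(D_i^T)$. By the splitting above, it therefore maps $\mathcal S$ into $\mathcal R(D_i^T)\cap\mathcal S$. Now suppose $\mathcal R(D_j^T)\cap\mathcal S=\operatorname{span}\{v\}$. Then the triple product sends $v$ into $\operatorname{span}\{v\}$. Since this holds for every $v\in\mathcal R(D_j^T)$, the product acts as a scalar on $\mathcal R(D_j^T)$ and vanishes on its orthogonal complement, so it has at most one nonzero eigenvalue. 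Triples with a repeated index are automatic from the $2$-division property, because each $\bar D_jD_{j'}^T$ is a scalar multiple of a unitary matrix, as noted after Definition~\ref{def:2d}.
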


The following corollary explains why complete division problems and loose
$(k+1)$-division problems play a central role in the analysis.
It should be noted, however, that not every vector
$v\in\mathcal{R}(D^T)$ is necessarily an indivisible direction.

\begin{corollary}
\label{cor:minimized subspaces indivisible within c-division}
Let a (loose) $(k+1)$-division problem ($k\geq 2$) be associated with the block set
$\{D_1, \ \cdots, \ D_{k+1}\}$. 
Then for every $j=1,\ \cdots,\ k+1$ and every nonzero
$v\in\mathcal{R}\!\left(D^T_{j}\right)$, the minimized subspace
$\mathcal{M}(v)$ is an indivisible subspace.
\end{corollary}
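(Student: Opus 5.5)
The plan is to reduce the statement to Theorem~\ref{thm:1 row separation subspace to indivisible}. That theorem only needs, for a given nonzero $v\in\mathcal{R}(D_j^T)$, one row separation subspace $\mathcal{S}$ of the \emph{full} problem with $v\in\mathcal{S}$ and $\dim(\mathcal{S}\cap\mathcal{R}(D_j^T))=1$. In a (loose) $(k+1)$-division problem, the only index set of cardinality $c=k+1$ is $\{C\}=\{1,\ldots,k+1\}$ itself. Hence the $\{C\}$-subset subproblem coincides with the original row separation problem, and its row separation subspaces are exactly those in $\{\mathcal{S}\}^{\mathrm{full}}$.

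The steps, in order, are as follows. Fix $j$ and a nonzero $v\in\mathcal{R}(D_j^T)$. Choose an ordered, non-repetitive index collection $(j_1,\ldots,j_{k+1})$ with $j_1=j$, i.e.\ any ordering of $\{1,\ldots,k+1\}$ starting at $j$. Then apply condition~(2) of Definition~\ref{def:c-division} with $c=k+1$, or condition~(3) of Definition~\ref{def:loose c-division} in the loose case. This gives a row separation subspace $\mathcal{S}^{\{C\}}$ with $v\in\mathcal{S}^{\{C\}}$ and $\dim(\mathcal{S}^{\{C\}}\cap\mathcal{R}(D_{j_i}^T))=1$ for every $i$, in particular for $i=1$. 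Since $\{C\}$ is the whole index set, $\mathcal{S}^{\{C\}}\in\{\mathcal{S}\}^{\mathrm{full}}$. Theorem~\ref{thm:1 row separation subspace to indivisible} with $\mathcal{S}=\mathcal{S}^{\{C\}}$ then yields that $\mathcal{M}(v)$ is indivisible.

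The only point requiring care, and the main obstacle such as it is, is justifying that the full-index subset subproblem ``is'' the original problem, so that its row separation subspaces are row separation subspaces of the full problem. This follows from Definition~\ref{def:C-subset sub-problem}: the reduced block set is the full block set, possibly reordered. A reordering is absorbed into the commutation matrix $E^D$ in Definition~\ref{def:row separation problem}, so the two solution sets, and hence the families of subspaces $\mathcal{R}(\overline{L^D}(B_{t'}^D)^T)$, agree.

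In the strict case the definition is stated for $c\ge4$, so the hypothesis $k\ge 2$ covers $k+1=3$ as well. For $k+1=3$ I would either invoke the corresponding property of $3$-division problems from \cite{Possible}, or note that the loose definition with $c=3$ supplies the same intersection condition. The loose definition also allows $c=k+1\ge3$ directly, so no splitting via Corollary~\ref{thm:loose c-division splitting} is needed.
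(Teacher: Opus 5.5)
Your proposal is correct and follows the derivation the paper intends. The paper states this corollary without a separate proof, since at level $c=k+1$ the $\{C\}$-subset subproblem is the whole problem. The defining intersection condition (Definition~\ref{def:c-division}(2), resp.\ Definition~\ref{def:loose c-division}(3), with $j_1=j$) then gives some $\mathcal{S}\in\{\mathcal{S}\}^{\mathrm{full}}$ with $v\in\mathcal{S}$ and $\dim(\mathcal{S}\cap\mathcal{R}(D_j^T))=1$, and Theorem~\ref{thm:1 row separation subspace to indivisible} finishes.

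The one caution concerns the strict case $k=2$. There your second fallback, appealing to the loose definition with $c=3$, is circular: a $3$-division problem is defined spectrally in Definition~\ref{def:3d}, so it does not satisfy the loose intersection condition by definition. Use your first option instead, namely the corresponding property of $3$-division problems from \cite{Possible}. You can also verify it directly: with the $1$- and $2$-division normalizations, the triangle condition forces $\bar D_j D_{j''}^T\bar D_{j''}D_{j'}^T\bar D_{j'}D_j^T$ to be a scalar multiple of the identity. That is enough to build the required one-dimensional-intersection subspace through $v$.
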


The following corollary establishes a direct link between indivisible subspaces and the notion of ambiguity. In particular, this result allows one to characterize structural orthogonality relations without invoking the total covariance matrix introduced in \ref{def:total covariance matrix}.

\begin{corollary}\label{cor:ambiguous subspace within subset sub-problem}
Consider a (loose) $(k+1)$-division problem ($k \geq 2$) associated with the block set
$\{D_1,\ \cdots,\ D_{k+1}\}$.  
Then every indivisible subspace of the (loose) $(k+1)$-division problem is an ambiguous subspace.
\end{corollary}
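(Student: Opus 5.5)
The plan is to verify condition~(3) of \ref{def:ambiguous subspace} directly for an indivisible subspace $\mathcal{I}$. Conditions~(1) and~(2) hold by assumption. By \ref{def:indivisible subspace}, the only row separation subspace contained in $\mathcal{I}$ is $\mathcal{I}$ itself. So it suffices to exhibit one row separation subspace $\mathcal{S}'$ with
\[
\mathcal{I}\not\perp\mathcal{S}' \quad\text{and}\quad \mathcal{I}\not\subseteq\mathcal{S}'.
\]
The argument uses only conditions~(1) and~(3) of the loose definition, with $c=k+1$, which also hold in the strict case. So the strict and loose cases can be treated together.

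\textbf{Step 1: $\mathcal{I}$ meets some block row space in exactly one dimension.} By \ref{def:row separation subspace}, a row separation subspace is spanned by the rows of a sub-block of ${\rm diag}\{B_1^D,\ldots,B_g^D\}$, transformed by $\overline{L^D}$. Each such row is a row of some $H_jD_jL^D$. Hence $\mathcal{I}$ is spanned by vectors each lying in some $\mathcal{R}(D_j^T)$, and in particular $\mathcal{I}\cap\mathcal{R}(D_j^T)\neq\{0\}$ for some $j$.

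Pick a nonzero $v$ in this intersection. Apply condition~(3) with the full index set $\{1,\ldots,k+1\}$ and $j_1=j$. This gives a row separation subspace $\mathcal{S}$ with $v\in\mathcal{S}$ and $\dim(\mathcal{S}\cap\mathcal{R}(D_j^T))=1$. By \ref{lem:indivisible minimized subspace}, $\mathcal{M}(v)=\mathcal{I}$. By \ref{def:minimized subspace}, $\mathcal{M}(v)\subseteq\mathcal{S}$. Therefore $\mathcal{I}\cap\mathcal{R}(D_j^T)=\mathrm{span}\{v\}$.

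\textbf{Step 2: construct the witness $\mathcal{S}'$.} Since $\operatorname{rank}(D_j)\geq 2$ by the $1$-division condition, there is a nonzero $w\in\mathcal{R}(D_j^T)$ with $w\perp v$. Set $u=v+w\in\mathcal{R}(D_j^T)$. Apply condition~(3) again, now to $u$, to obtain a row separation subspace $\mathcal{S}'$ with $u\in\mathcal{S}'$ and $\mathcal{S}'\cap\mathcal{R}(D_j^T)=\mathrm{span}\{u\}$. One could equally take $\mathcal{S}'=\mathcal{M}(u)$.

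\textbf{Step 3: check the two properties.} First, $v\notin\mathrm{span}\{u\}$ because $w\neq0$ and $w\perp v$. Since $v\in\mathcal{R}(D_j^T)$, this gives $v\notin\mathcal{S}'$, and hence $\mathcal{I}\not\subseteq\mathcal{S}'$. Second, $\langle v,u\rangle=\|v\|^2\neq0$, so $\mathcal{I}\not\perp\mathcal{S}'$. Thus $\mathcal{S}'$ is the required witness, and $\mathcal{I}$ is ambiguous.

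\textbf{Main obstacle.} The only delicate point is the structural fact in Step~1: that a row separation subspace is spanned by vectors lying in the individual block row spaces $\mathcal{R}(D_j^T)$. This requires carefully unwinding \ref{def:row separation subspace}, in particular the conjugation and transpose conventions, and using that the commutation matrix $E^D$ only permutes rows. The remaining steps are direct consequences of the division condition and \ref{lem:indivisible minimized subspace}.
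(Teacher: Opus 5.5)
Your proof is correct. The paper gives no proof of this corollary: it places it directly after Corollary~\ref{cor:minimized subspaces indivisible within c-division} as an immediate consequence. The implied argument is the one you give. A second direction $u=v+w$ in the same block, neither parallel nor orthogonal to $v$, yields a row separation subspace that meets $\mathcal{I}$ non-orthogonally without containing it. The paper would presumably take this witness to be the indivisible subspace $\mathcal{M}(u)$. You observe instead that any $\mathcal{S}'$ supplied by the $(k+1)$-subset condition suffices. Indivisibility of the witness is therefore never needed, and the argument uses only the definitions, $\operatorname{rank}(D_j)\ge 2$, and the division condition on the full index set.

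Two small remarks.

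First, the initial application of the division condition in Step~1, which gives $\mathcal{I}\cap\mathcal{R}(D_j^T)=\mathrm{span}\{v\}$, is never used. Step~3 needs only $v\in\mathcal{I}$ and $\mathcal{S}'\cap\mathcal{R}(D_j^T)=\mathrm{span}\{u\}$.

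Second, you assert that the subspace condition also holds in the strict case. This follows from Definition~\ref{def:c-division} only when $k\ge 3$. For $k=2$, a strict $3$-division problem is defined spectrally in Definition~\ref{def:3d}. You must therefore import the fact, from \cite{Possible}, that such problems satisfy the triple subspace condition; the paper relies on the same fact implicitly in Corollary~\ref{cor:minimized subspaces indivisible within c-division}. This is where the spectral hypothesis enters. In a $2$-division problem the row spaces are pairwise isoclinic, and the eigenvalue condition forces the cycle product ${\rm Cov}^{(j_1,j_2,j_3)}$ to act as a scalar on $\mathcal{R}(D_{j_1}^T)$.
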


The next theorem provides a stronger and more practical consequence of this ambiguity: within a given row block, all indivisible subspaces exhibit the same intersection pattern with the remaining row blocks.

\begin{theorem}\label{thm:indivisible similarity}
Let a (loose) $(k+1)$-division problem ($k \geq 2$) be associated with the block set
$\{D_1,\ \cdots,\ D_{k+1}\}$.  
Then, for any index $j \in \{1,\ \cdots,\ k+1\}$ and any nonzero vectors
$v, v' \in \mathcal{R}(D_j^T)$, their minimized subspaces
$\mathcal{M}(v)$ and $\mathcal{M}(v')$ satisfy
\[
\dim\!\bigl(\mathcal{M}(v) \cap \mathcal{R}(D_s^T)\bigr)
=
\dim\!\bigl(\mathcal{M}(v') \cap \mathcal{R}(D_s^T)\bigr),
\qquad
\forall\, s \in \{1,\ \cdots,\ k+1\}.
\]
\end{theorem}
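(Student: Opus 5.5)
Fix $j$ and nonzero $v,v'\in\mathcal{R}(D_j^T)$. By Corollary~\ref{cor:minimized subspaces indivisible within c-division}, both $\mathcal{M}(v)$ and $\mathcal{M}(v')$ are indivisible subspaces. The proof reduces the comparison of intersection dimensions to a statement about a single subspace, using the $(k+1)$-division property with the full index set $\{C\}=\{1,\ldots,k+1\}$ (ordered so that $j_1=j$). That property gives, for $v$, a row separation subspace $\mathcal{S}_v$ of the full problem with $v\in\mathcal{S}_v$ and $\dim(\mathcal{S}_v\cap\mathcal{R}(D_s^T))=1$ for every $s$. The first step is to show that $\mathcal{M}(v)=\mathcal{S}_v$, and hence that
\[
\dim\bigl(\mathcal{M}(v)\cap\mathcal{R}(D_s^T)\bigr)=1
\]
for all $s$. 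The same argument applied to $v'$ then gives the claimed equality, with common value $1$.

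To prove $\mathcal{M}(v)=\mathcal{S}_v$, note first that $\mathcal{M}(v)\subseteq\mathcal{S}_v$ by Definition~\ref{def:minimized subspace}. Consequently $\dim(\mathcal{M}(v)\cap\mathcal{R}(D_s^T))\le 1$ for every $s$. By Theorem~\ref{thm:indivisible to row separation subspace}, $\mathcal{S}_v$ decomposes as an orthogonal direct sum of indivisible subspaces, one of which is $\mathcal{M}(v)$. Write $\mathcal{S}_v=\mathcal{M}(v)\oplus\mathcal{S}'$, where $\mathcal{S}'$ is a row separation subspace by Corollary~\ref{cor:row separation subspace sum}. It remains to show $\mathcal{S}'=0$. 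Since each intersection $\mathcal{S}_v\cap\mathcal{R}(D_s^T)$ is one-dimensional, the hard part is controlling how $\mathcal{S}'$ meets the blocks $\mathcal{R}(D_s^T)$. A row separation subspace is spanned by block-aligned rows after the transformations $H_s$, so it equals the direct sum of its intersections with the $\mathcal{R}(D_s^T)$. This identity holds for $\mathcal{S}_v$, $\mathcal{M}(v)$ and $\mathcal{S}'$ simultaneously. Counting dimensions block by block, for each $s$ exactly one of $\mathcal{M}(v)$ and $\mathcal{S}'$ meets $\mathcal{R}(D_s^T)$ in a line, and the other meets it trivially.

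Suppose $\mathcal{S}'\neq0$. Then the index set splits into two nonempty parts: the blocks $s$ met by $\mathcal{M}(v)$, which include $j$, and the blocks met by $\mathcal{S}'$. I would derive a contradiction from the $2$-division property. Take $s$ with line $\langle u_s\rangle\subset\mathcal{M}(v)$ and $s'$ with line $\langle w_{s'}\rangle\subset\mathcal{S}'$. Since $\mathcal{M}(v)\perp\mathcal{S}'$, we get $u_s\perp w_{s'}$. The $2$-division property says that $\bar D_s D_{s'}^T$ is a scalar multiple of a unitary matrix. Hence $\mathcal{R}(D_s^T)$ and $\mathcal{R}(D_{s'}^T)$ are either mutually orthogonal or related by a scaled isometry, and in the latter case every vector of $\mathcal{R}(D_s^T)$ has a nonzero projection onto $\mathcal{R}(D_{s'}^T)$. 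The block-aligned structure of $\mathcal{S}_v$ forces $u_s$ to project into $\mathcal{S}_v\cap\mathcal{R}(D_{s'}^T)=\langle w_{s'}\rangle$, because the projection of a row separation subspace onto another block stays inside that subspace. This contradicts $u_s\perp w_{s'}$ unless the two blocks are orthogonal. So the two parts would be mutually orthogonal collections of blocks, which contradicts the connectivity of $\mathrm{Dov}$ in the strict case.

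In the loose case I would first apply Corollary~\ref{thm:loose c-division splitting} and run the argument inside the connected component containing $j$. Blocks in other components, which are orthogonal to $\mathcal{R}(D_j^T)$, are handled with the help of Corollary~\ref{cor:ambiguous subspace within subset sub-problem}. This step needs care because the statement asserts equal dimensions for all $s$, including blocks outside the component. There the claim should reduce to showing that the intersection dimension is determined by the component alone, via the ambiguity and indivisibility of $\mathcal{M}(v)$.

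The main obstacle is the projection claim: that for a row separation subspace $\mathcal{S}$, the orthogonal projection of $\mathcal{S}\cap\mathcal{R}(D_s^T)$ onto $\mathcal{R}(D_{s'}^T)$ lies in $\mathcal{S}$. I would justify it from Definition~\ref{def:row separation subspace}. After applying $L^D$, the rows of $D_{s'}$ split into those supported on the block $B_t^D$ and those supported on the complementary columns. Since $D_{s'}^T\bar D_{s'}$ is proportional to a projector by the $1$-division property, the projection onto $\mathcal{R}(D_{s'}^T)$ respects this column split.
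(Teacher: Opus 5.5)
Your strict-case argument takes a different route from the paper's, and its core is sound. The paper never computes the intersection pattern. It notes that $\mathcal{M}(v)$ and $\mathcal{M}(v')$ are indivisible ambiguous subspaces (Corollaries~\ref{cor:minimized subspaces indivisible within c-division} and~\ref{cor:ambiguous subspace within subset sub-problem}). It then quotes Theorem~3.43 of~\cite{Possible} for non-orthogonal $v,v'$, and reduces the case $v\perp v'$ to it through $v''=v+v'$. You instead show directly that $\mathcal{M}(v)=\mathcal{S}_v$ meets every block in a line. Your ingredients are block-compatibility of row separation subspaces, the scaled-isometry form of $\bar D_sD_{s'}^T$, and connectivity of $\mathrm{Dov}$. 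This avoids ambiguity and Theorem~3.43 altogether, gives the common value explicitly, and essentially yields Theorem~\ref{thm:yes or no c-division} as a by-product.

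Two repairs are needed even in the strict case.

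\emph{Sum, not direct sum.} A row separation subspace is the \emph{sum}, not the direct sum, of its intersections with the $\mathcal{R}(D_s^T)$: row spaces of different blocks can overlap or coincide (in Example~\ref{ex:c-division example} they are all equal). What your block-by-block count actually needs is
$\mathcal{S}_v\cap\mathcal{R}(D_s^T)=(\mathcal{M}(v)\cap\mathcal{R}(D_s^T))\oplus(\mathcal{S}'\cap\mathcal{R}(D_s^T))$.
This follows from the projection property: write $z=x+y$ and apply $P_s$. The same reasoning is what places $v$ in a single summand of Theorem~\ref{thm:indivisible to row separation subspace}. The projection property itself does not come from the $1$-division property. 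It holds because $\mathcal{R}(D_{s'}^T)$ is spanned by rows of $H_{s'}D_{s'}$ lying either in the column block of $\mathcal{S}$ or in its orthogonal complement, so it splits orthogonally along that decomposition.

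\emph{The case $k=2$.} Here the problem is a $3$-division problem in the spectral sense of Definition~\ref{def:3d}, and Definition~\ref{def:c-division} only covers $c\ge 4$. So the subspace $\mathcal{S}_v$ you start from is not supplied by the definitions. It must be imported from~\cite{Possible} or derived from the triple-product condition.

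The genuine gap is the loose case, which you leave open. The tool you point to, ambiguity, is not what is needed. Applied to the full problem, your bipartition argument only shows that the set of blocks met by $\mathcal{M}(v)$ is a union of $\mathrm{Dov}$-components containing the component $C_1\ni j$. Since $\mathcal{S}_v$ meets every block, this set could a priori depend on $v$.

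The missing step is minimality. By Corollary~\ref{thm:loose c-division splitting}(4), $V_1=\sum_{s\in C_1}\mathcal{R}(D_s^T)$ is orthogonal to every block outside $C_1$. It is therefore itself a row separation subspace containing $v$: take $H_s=I$ and $L^D$ adapted to $V_1\oplus V_1^{\perp}$. Hence $\mathcal{M}(v)\subseteq V_1$, and $\mathcal{M}(v)\cap\mathcal{R}(D_s^T)=0$ for every $s\notin C_1$. Inside $C_1$, which is connected because each component is a division problem, your argument gives dimension $1$. The intersection pattern is thus the indicator of $C_1$, which depends only on $j$, and this closes the loose case.
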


\begin{proof}
By \ref{cor:minimized subspaces indivisible within c-division} and
\ref{cor:ambiguous subspace within subset sub-problem},
both $\mathcal{M}(v)$ and $\mathcal{M}(v')$ are indivisible ambiguous subspaces.

If $v$ and $v'$ are not orthogonal, the conclusion follows directly from [\cite{Possible}, Theorem 3.43].

If $v \perp v'$, consider the vector $v'' = v + v'$. Its minimized subspace
$\mathcal{M}(v'')$ is again an indivisible ambiguous subspace.
Applying Theorem~3.43 in~\cite{Possible} yields, for all
$s \in \{1,\ \cdots,\ k+1\}$,
\[
\dim\!\bigl(\mathcal{M}(v) \cap \mathcal{R}(D_s^T)\bigr)
=
\dim\!\bigl(\mathcal{M}(v'') \cap \mathcal{R}(D_s^T)\bigr)
=
\dim\!\bigl(\mathcal{M}(v') \cap \mathcal{R}(D_s^T)\bigr),
\]
which completes the proof.
\end{proof}

The following corollary translates the structural information carried by indivisible subspaces into exact orthogonality relations among row blocks.

\begin{corollary}
    
\label{thm:indivisible to row block orthogonal}
Considering a (loose) $k+1$-division problem ($k\geq 2$) associated with the block set
$\{D_1,\ \cdots,\ D_{k+1}\}$ and an indivisible subspace $\mathcal{I}$, if there exists an index
$j\in\{1,\ \cdots,\ k+1\}$ such that
\[
{\rm dim}\!\left(\mathcal{I}\cap \mathcal{R}(D_j^T)\right)=0,
\]
then for every index $s\in\{1,\ \cdots,\ k+1\}$ satisfying
\[
{\rm dim}\!\left(\mathcal{I}\cap \mathcal{R}(D_s^T)\right)=1,
\]
it holds that
\[
\bar{D}_s D_j^T = 0.
\]
\end{corollary}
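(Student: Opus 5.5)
We argue by contradiction, building on Theorem~\ref{thm:indivisible similarity}. Fix the indivisible subspace $\mathcal{I}$ with $\dim(\mathcal{I}\cap\mathcal{R}(D_j^T))=0$ and an index $s$ with $\dim(\mathcal{I}\cap\mathcal{R}(D_s^T))=1$. Pick a nonzero $v\in\mathcal{I}\cap\mathcal{R}(D_s^T)$. By Lemma~\ref{lem:indivisible minimized subspace}, $\mathcal{M}(v)=\mathcal{I}$. Theorem~\ref{thm:indivisible similarity} then shows that every nonzero $v'\in\mathcal{R}(D_s^T)$ has a minimized subspace $\mathcal{M}(v')$ with the same intersection pattern as $\mathcal{I}$. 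In particular,
\[
\dim\bigl(\mathcal{M}(v')\cap\mathcal{R}(D_j^T)\bigr)=0
\quad\text{and}\quad
\dim\bigl(\mathcal{M}(v')\cap\mathcal{R}(D_s^T)\bigr)=1 .
\]
By Corollary~\ref{cor:minimized subspaces indivisible within c-division}, each such $\mathcal{M}(v')$ is indivisible.

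Next we choose an orthonormal basis $v_1,\ldots,v_{r_s}$ of $\mathcal{R}(D_s^T)$ and show that their minimized subspaces $\mathcal{M}(v_i)$ are pairwise orthogonal or equal. Two distinct indivisible subspaces cannot both be minimized subspaces of the same vector. Using Theorem~\ref{thm:indivisible to row separation subspace} together with the $1$-dimensional intersection with $\mathcal{R}(D_s^T)$, we then aim to show that $\mathcal{R}(D_s^T)$ is contained in an orthogonal direct sum of indivisible subspaces, each meeting $\mathcal{R}(D_j^T)$ trivially. By Corollary~\ref{cor:row separation subspace sum}, this sum $\mathcal{S}$ is itself a row separation subspace, and $\mathcal{S}\supseteq\mathcal{R}(D_s^T)$.

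The key claim is then that $\mathcal{S}\perp\mathcal{R}(D_j^T)$. A row separation subspace arises from a block-diagonal form produced by unitary mixing within each row block. Hence every row separation subspace splits as $\bigoplus_\ell(\mathcal{S}\cap\mathcal{R}(D_\ell^T))$, and its complement in $\mathcal{R}(D^T)$ is also a row separation subspace. If $\mathcal{S}\cap\mathcal{R}(D_j^T)=0$, then $\mathcal{R}(D_j^T)$ lies in the complementary block of the decomposition, so $\mathcal{R}(D_j^T)\perp\mathcal{S}\supseteq\mathcal{R}(D_s^T)$. This orthogonality is exactly $\bar D_sD_j^T=0$.

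The main obstacle is the middle step: showing that the minimized subspaces of an orthonormal basis of $\mathcal{R}(D_s^T)$ can be taken mutually orthogonal, so that Corollary~\ref{cor:row separation subspace sum} applies. Ambiguous indivisible subspaces can overlap non-orthogonally (Corollary~\ref{cor:ambiguous subspace within subset sub-problem}). As a first attempt, we would avoid this step and argue directly from a single solution of the row separation problem that realizes $\mathcal{I}$ as a block. In the block-diagonal form, the block $B_t^D$ corresponding to $\mathcal{I}$ has no rows coming from $D_j$. Because $\mathcal{I}$ is then a full block, the complement of $\mathcal{I}$ is a union of blocks and hence a row separation subspace, which contains $\mathcal{R}(D_j^T)$. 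This gives $v\perp\mathcal{R}(D_j^T)$.

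Since $v$ was an arbitrary nonzero vector of $\mathcal{R}(D_s^T)$, and Theorem~\ref{thm:indivisible similarity} guarantees every such $v$ has an indivisible minimized subspace meeting $\mathcal{R}(D_j^T)$ trivially, the same argument applies to every $v\in\mathcal{R}(D_s^T)$. Therefore $\mathcal{R}(D_s^T)\perp\mathcal{R}(D_j^T)$, that is, $\bar D_sD_j^T=0$.
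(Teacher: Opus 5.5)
Your final argument is correct: by Theorem~\ref{thm:indivisible similarity}, every nonzero $v'\in\mathcal{R}(D_s^T)$ satisfies $\dim(\mathcal{M}(v')\cap\mathcal{R}(D_j^T))=0$, so a solution realizing $\mathcal{M}(v')$ as a diagonal block places no row of $H_jD_j$ in that block, which gives $v'\perp\mathcal{R}(D_j^T)$ and hence $\bar{D}_sD_j^T=0$. The paper gives no written proof and presents the statement as an immediate consequence of Theorem~\ref{thm:indivisible similarity}, which is essentially your route; the orthogonal-direct-sum detour you flag as the main obstacle is unnecessary and can be deleted.
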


The following theorem provides a sharp criterion distinguishing genuine $(k+1)$-division problems from loose ones, based solely on the existence and structure of indivisible subspaces.

\begin{theorem}\label{thm:yes or no c-division}
Consider a (loose) $(k+1)$-division problem ($k \geq 2$) associated with the block set
$\{D_1, \ \cdots, \ D_{k+1}\}$. Then the following statements hold:
\begin{enumerate}
\item The problem is a $(k+1)$-division problem if and only if there exists an indivisible subspace
$\mathcal{I}$ such that
\[
\dim\!\left(\mathcal{I} \cap \mathcal{R}\!\left(D_j^{T}\right)\right) = 1,
\qquad \forall\, j = 1, \ \cdots, \ k+1.
\]

\item The problem is a loose $(k+1)$-division problem if and only if there exists an indivisible subspace
$\mathcal{I}$ and an index $j \in \{1, \ \cdots, \ k+1\}$ such that
\[
\dim\!\left(\mathcal{I} \cap \mathcal{R}\!\left(D_j^{T}\right)\right) = 0.
\]
\end{enumerate}
\end{theorem}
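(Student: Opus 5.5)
We treat the two statements together by showing that, under the (loose) $(k+1)$-division hypothesis, every indivisible subspace meets every row block in dimension at most one, and that the genuine and loose cases differ exactly in whether some indivisible subspace misses a block.

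The first step is to fix a nonzero $v\in\mathcal{R}(D_1^T)$. Condition~(2) of \ref{def:c-division} or condition~(3) of \ref{def:loose c-division}, applied with $c=k+1$, gives a row separation subspace $\mathcal{S}$ with $v\in\mathcal{S}$ and $\dim(\mathcal{S}\cap\mathcal{R}(D_j^T))=1$ for all $j$. By \ref{thm:1 row separation subspace to indivisible} and \ref{cor:minimized subspaces indivisible within c-division}, the minimized subspace $\mathcal{M}(v)$ is indivisible and satisfies $\mathcal{M}(v)\subseteq\mathcal{S}$. Hence $\dim(\mathcal{M}(v)\cap\mathcal{R}(D_j^T))\le 1$ for every $j$. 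By \ref{thm:indivisible similarity}, this intersection pattern is the same for all nonzero vectors of a given block. Since any indivisible $\mathcal{I}$ contains a nonzero vector of some block, Lemma~\ref{lem:indivisible minimized subspace} gives $\mathcal{I}=\mathcal{M}(w)$ for such a vector $w$. Therefore every indivisible subspace meets each block in dimension $0$ or $1$. The dichotomy in (1) and (2) is then exhaustive: either some indivisible subspace has a zero intersection with a block, or all of them meet every block in dimension one.

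For (1), forward direction, assume the problem is a $(k+1)$-division problem. It is then a $2$-division problem, so the covariance graph is connected. Suppose some indivisible $\mathcal{I}$ had $\dim(\mathcal{I}\cap\mathcal{R}(D_j^T))=0$ for some $j$, and let $T$ be the set of blocks $s$ with intersection dimension one; $T$ is nonempty. By \ref{thm:indivisible to row block orthogonal}, every $s\in T$ is orthogonal to every block outside $T$. This contradicts connectivity of $\mathrm{Dov}$, since the argument needs only some edge leaving $T$. Hence every indivisible subspace meets all blocks in dimension one, and in particular one exists.

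For (1), converse direction, let $\mathcal{I}$ meet all blocks in dimension one. Suppose the problem were only loose and not a genuine $(k+1)$-division problem. Note that conditions (1) and (2) of a $2$-division problem already hold from the loose definition, so failure of connectivity is the relevant alternative. We would then apply \ref{thm:loose c-division splitting} to obtain mutually orthogonal nonempty index groups $\{C_1\},\{C_2\}$. Take $j\in\{C_1\}$ and $s\in\{C_2\}$, and nonzero $u\in\mathcal{I}\cap\mathcal{R}(D_j^T)$, $u'\in\mathcal{I}\cap\mathcal{R}(D_s^T)$. The mutual orthogonality of the groups lets us split $\mathcal{I}$ into the orthogonal pieces $\mathcal{I}\cap\bigoplus_{C_1}\mathcal{R}(D^T)$ and its complement. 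We would show these pieces are row separation subspaces, because a solution restricted to orthogonal block groups separates them. This contradicts indivisibility. It also shows connectivity, so all conditions of the recursive definition hold down to $c=2$. The remaining levels $3,\ldots,k$ are inherited by taking subsets $\{C\}$ and intersecting $\mathcal{S}$ with $\bigoplus_{C}\mathcal{R}(D^T)$, as in the proof of \ref{thm:subset exchange}.

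Part (2) then follows by complementarity. A loose but not genuine problem has some indivisible subspace missing a block, by the negation of (1) together with the at-most-one observation. Conversely, such an $\mathcal{I}$ rules out the genuine case by the forward direction of (1).

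The main obstacle is the converse of (1). Specifically, we must justify that intersecting a row separation subspace with the span of a mutually orthogonal group of blocks yields a row separation subspace. We would prove this directly from \ref{def:row separation problem} by permuting the rows of the block-diagonal form according to the groups.
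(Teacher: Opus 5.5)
Your proposal is correct and follows essentially the paper's route. Corollary~\ref{thm:indivisible to row block orthogonal} together with connectivity of $\mathrm{Dov}$ gives sufficiency of (2) and necessity of (1), and Corollary~\ref{thm:loose c-division splitting} gives the other two directions; you also make explicit two facts the paper's three-line proof leaves implicit, namely that every indivisible subspace meets each $\mathcal{R}(D_j^T)$ in dimension at most one, and that no indivisible subspace can straddle two mutually orthogonal groups of blocks.

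Two small repairs are needed.

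First, in the straddling lemma, permuting rows is not enough. You must also replace the columns of $L^D$ belonging to the diagonal block that realizes $\mathcal{S}$ by an orthonormal basis adapted to $\mathcal{S}=(\mathcal{S}\cap V_1)\oplus(\mathcal{S}\cap V_1^{\perp})$, where $V_1=\bigoplus_{j\in\{C_1\}}\mathcal{R}(D_j^T)$.

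Second, for $k=2$ a genuine $3$-division problem is defined spectrally (Definition~\ref{def:3d}). So the dimension-one witness $\mathcal{S}$ used in your first step cannot be read off a definition; it has to be supplied from the $3$-division theory of \cite{Possible}.
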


\begin{proof}
The sufficiency of statement~(2) follows directly from
\ref{thm:indivisible to row block orthogonal}.
The necessity of statement~(1) can be deduced from the sufficiency of statement~(2).
Finally, by invoking \ref{thm:loose c-division splitting}, the necessity of~(2) and the sufficiency of~(1)
are obtained.
\end{proof}

\begin{remark}
Theorem~\ref{thm:yes or no c-division} allows one to work flexibly with loose division problems while retaining a
clear and verifiable criterion for global connectivity.
\end{remark}

We conclude this subsection with a key structural result that significantly simplifies the verification of the $c$-division property.   
The following corollary shows that this requirement of Definition~\ref{def:c-division} can be relaxed: it is sufficient to analyze unordered $c$-element subsets and to identify, within each subset, a single distinguished row block that controls the divisibility structure.

\begin{corollary}
    
\label{thm:single c-division}
Let $c \geq 4$.  
The row separation problem associated with the block set $\{D_1, \ \cdots, \ D_{k+1}\}$ is a $c$-division problem if and only if the following conditions hold:
\begin{enumerate}
\item It is a $(c-1)$-division problem.
\item For every non-repetitive index subset $\{C\}\subset\{1,\ldots,k+1\}$ of cardinality $c$,
there exists an index $j\in\{C\}$ such that, for every vector
$v\in\mathcal{R}\!\left(D^T_{j}\right)$, there exists a row separation subspace
$\mathcal{S}^{\{C\}}$ of the $\{C\}$-subset sub-problem satisfying
\[
v\in\mathcal{S}^{\{C\}},
\qquad
\dim\!\left(
\mathcal{S}^{\{C\}} \cap \mathcal{R}\!\left(D^T_{s}\right)
\right)=1
\quad \forall\,\qquad s\in\{C\}.
\]
\end{enumerate}
\end{corollary}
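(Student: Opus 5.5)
Necessity is immediate. If the problem is a $c$-division problem, condition~(1) is part of Definition~\ref{def:c-division}. For condition~(2), take any $\{C\}$ of cardinality $c$ and any $j\in\{C\}$, and order the subset as $(j,\ldots)$ with $j_1=j$. Definition~\ref{def:c-division}(2) then supplies the required subspace $\mathcal{S}^{\{C\}}$ for every $v\in\mathcal{R}(D_j^T)$. So all the work lies in sufficiency.

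For sufficiency, fix a subset $\{C\}$ of cardinality $c$ and the distinguished index $j$ given by~(2). We must show that the same property holds with $j$ replaced by an arbitrary $j'\in\{C\}$. The plan is to reduce to the structure of the $\{C\}$-subset subproblem and invoke the results of Section~\ref{sec:c in c} with $k+1$ replaced by $c$.

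First we check that the $\{C\}$-subset subproblem is a (loose) $c$-division problem in its own right. Since the full problem is $(c-1)$-division, it is in particular a $1$- and $2$-division problem restricted to $\{C\}$, so conditions~(1)--(2) of Definition~\ref{def:loose c-division} hold. The single-index hypothesis gives the ``for every $v$'' condition only for $j_1=j$, so we cannot yet quote Definition~\ref{def:loose c-division}(3) in full. Instead we use a vector-wise argument.

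For each nonzero $v\in\mathcal{R}(D_j^T)$, the hypothesis and Theorem~\ref{thm:1 row separation subspace to indivisible} (applied in the $\{C\}$-subproblem) show that $\mathcal{M}^{\{C\}}(v)$ is indivisible. Its intersection with each $\mathcal{R}(D_s^T)$ has dimension at most one, because it is contained in $\mathcal{S}^{\{C\}}$. It also has dimension at least one, since $\mathcal{M}^{\{C\}}(v)$ lies in $\mathcal{S}^{\{C\}}$ and one checks it cannot drop a block; this nonvanishing is argued via Corollary~\ref{thm:indivisible to row block orthogonal}, using the connectivity of $\mathrm{Dov}$ from the $2$-division property to rule out a zero intersection. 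Hence there is a family of indivisible subspaces $\mathcal{I}_v$, each meeting every $\mathcal{R}(D_s^T)$, $s\in\{C\}$, in exactly one dimension.

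By Theorem~\ref{thm:indivisible to row separation subspace} and Corollary~\ref{cor:row separation subspace sum}, choosing an orthonormal basis $v_1,\ldots,v_r$ of $\mathcal{R}(D_j^T)$ and taking the orthogonal sum of the corresponding $\mathcal{I}_{v_i}$ gives a row separation subspace decomposition. Since $r_s=r_j$ for all $s$ (the $2$-division remark), these $r$ mutually orthogonal one-dimensional pieces exhaust each $\mathcal{R}(D_{j'}^T)$.

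To verify the property at $j'$, take any $w\in\mathcal{R}(D_{j'}^T)$ and consider its minimized subspace $\mathcal{M}^{\{C\}}(w)$. Intersecting $\mathcal{M}^{\{C\}}(w)$ with $\mathcal{R}(D_j^T)$ yields a vector $v$, or a subspace from which we pick $v$. We then want to show $w\in\mathcal{I}_v$ for a suitably chosen $v$, which makes $\mathcal{I}_v$ the desired $\mathcal{S}^{\{C\}}$. The natural route is Theorem~\ref{thm:indivisible similarity}-type reasoning via [\cite{Possible}, Theorem 3.43]: all indivisible subspaces through $D_j$ share the intersection pattern $(1,\ldots,1)$, and nonorthogonal indivisible ambiguous subspaces share dimension patterns. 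One then passes the $w$ direction through the map $\bar D_{j'}D_j^T$, which is a scaled unitary, to locate the matching $v$.

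Finally, the recursion in~(1) supplies the $(c-1)$-level conditions, completing Definition~\ref{def:c-division}.

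The main obstacle is this last transfer step. Showing that every $w\in\mathcal{R}(D_{j'}^T)$, not just basis directions, lies in some indivisible subspace meeting all blocks once requires controlling how the family $\{\mathcal{I}_v\}$ varies with $v$. I would first try to prove that $v\mapsto\mathcal{I}_v\cap\mathcal{R}(D_{j'}^T)$ is induced by a linear bijection $\mathcal{R}(D_j^T)\to\mathcal{R}(D_{j'}^T)$, using minimality and the uniqueness of minimized subspaces, and then pull back $w$ through that bijection.
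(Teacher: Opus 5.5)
Your necessity direction is fine. In the sufficiency direction, one step is both circular and false in general, and the decisive step is left open.

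The lower bound $\dim(\mathcal{I}_v\cap\mathcal{R}(D_s^T))\ge 1$ is justified through Corollary~\ref{thm:indivisible to row block orthogonal}, and the transfer step through Theorem~\ref{thm:indivisible similarity} and Theorem~3.43 of \cite{Possible}. All of these are statements about (loose) division problems. Applying them to the $\{C\}$-subproblem therefore presupposes what you are proving; you noticed this for Definition~\ref{def:loose c-division}(3), but then use its consequences anyway. Moreover, the connectivity of $\mathrm{Dov}$ in Definition~\ref{def:2d} refers to the whole index set and is not inherited by $\{C\}$, and without it the claim fails. Take $D_s=x_s^T\otimes I_2$ with $x_1=e_1$, $x_2=e_1+e_2$, $x_3=e_2+e_3$, $x_4=e_3+e_4$, $x_5=e_4$ in $\mathbb{R}^4$. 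One checks that this is a complete division problem: every $\bar D_aD_b^T$ is a multiple of $I_2$, and $\mathrm{span}\{x_s\otimes w: s\in\{C\}\}$ meets each block of any $\{C\}$ exactly once. Yet for $\{C\}=\{1,2,4,5\}$ and $v=x_1\otimes w$ one gets $\mathcal{M}^{\{C\}}(v)=\mathrm{span}\{x_1\otimes w,\,x_2\otimes w\}$, which misses $\mathcal{R}(D_4^T)$ and $\mathcal{R}(D_5^T)$. So your claim that the $r$ pieces exhaust each $\mathcal{R}(D_{j'}^T)$ fails as well. Separately, the mutual orthogonality of the $\mathcal{I}_{v_i}$ is only asserted. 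It does hold, since $\mathcal{I}_{v_1}\cap\mathcal{R}(D_j^T)=\mathrm{span}(v_1)$ and the orthogonal complement of a row separation subspace inside $\mathcal{R}(D^T)$ is again one, but you need to say so.

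The step you flag as the main obstacle is the actual content of the corollary. The missing idea is elementary: it is what the paper means by tracing paths in the graph of $\mathrm{Dov}$, and it gives your linear bijection explicitly. By the $1$-division property $\bar D_sD_s^T=\mu_sI$, so $\Pi_s=\mu_s^{-1}D_s^T\bar D_s$ is the orthogonal projector onto $\mathcal{R}(D_s^T)$. Two facts are needed.

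(a) Every row separation subspace $\mathcal{S}^{\{C\}}$ of the $\{C\}$-subproblem is invariant under $\Pi_s$ for $s\in\{C\}$. This holds because $\mathcal{R}(D_s^T)$ is the orthogonal sum of its intersections with $\mathcal{S}^{\{C\}}$ and with $(\mathcal{S}^{\{C\}})^\perp$.

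(b) If $\bar D_aD_b^T\neq0$, then Definition~\ref{def:2d}(2) and $r_a=r_b$ make $\Pi_a$, restricted to $\mathcal{R}(D_b^T)$, a nonzero multiple of an isometry onto $\mathcal{R}(D_a^T)$. Hence $\Pi_b\Pi_a$ acts as a positive scalar on $\mathcal{R}(D_b^T)$.

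Now take $w\in\mathcal{R}(D_{j'}^T)$ and a path $j'=\ell_0,\ell_1,\ldots,\ell_m=j$ inside $\{C\}$ with $\bar D_{\ell_t}D_{\ell_{t+1}}^T\neq0$. Set $v=\Pi_{\ell_m}\cdots\Pi_{\ell_1}w$ and take $\mathcal{S}^{\{C\}}\ni v$ from condition~(2). By (a) and (b),
\[
\mathcal{S}^{\{C\}}\ni\Pi_{\ell_0}\cdots\Pi_{\ell_{m-1}}v=\Pi_{\ell_0}\cdots\Pi_{\ell_{m-1}}\Pi_{\ell_m}\Pi_{\ell_{m-1}}\cdots\Pi_{\ell_1}w=\kappa w,\qquad \kappa>0,
\]
so $w\in\mathcal{S}^{\{C\}}$, and the one-dimensional intersection conditions come with $\mathcal{S}^{\{C\}}$. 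No minimized subspaces, orthonormal bases, or Theorem~3.43 are needed.

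If $j'$ lies in a component of the $\mathrm{Dov}$-graph induced on $\{C\}$ that does not contain $j$, no such path exists. In that case use the lower-order division hypothesis~(1) on that component to obtain a row separation subspace through $w$ meeting each of its blocks once. Then add to it, orthogonally, the part of any $\mathcal{S}^{\{C\}}$ from~(2) that lies in the span of the remaining blocks.
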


\begin{remark}\ref{thm:single c-division} can be obtained by tracing suitable paths in the total covariance matrix (Definition~\ref{def:total covariance matrix}). It shows that the $c$-division property is governed by the existence of a single controlling row block within each $c$-element subset.  
\end{remark}

It is useful to keep in mind that Condition~(2) of Definition~\ref{def:2d} remains a guiding principle for recognizing division problems.  
The analysis in Subsection~\ref{sec:c in c} is entirely subspace-driven; in the next subsection, we turn to a covariance-based formulation that provides a complementary computational perspective.

\subsubsection{Covariance Structure Associated with $\{C\}$-subset Sub-problems}
\label{sec:Cov in c}

In this subsection, we introduce an algebraic object that provides a complementary,
computable description of these structures.
Specifically, for each ordered index collection $\left(j_1,\ldots,j_c\right)$, we associate a covariance
matrix ${\rm Cov}^{\left(j_1,\ldots,j_c\right)}$.
The covariance matrix encodes the orthogonality relations between row blocks along the
corresponding index sequence.
We show that the spectral properties of covariance matrices are tightly linked to the homogeneity conditions defining (loose) $c$-division problems.

The definition below introduces the covariance matrix associated with a fixed
ordering of a given index subset.

\begin{definition}[Covariance Matrix]
\label{def:covariance matrix}
Consider a row separation problem associated with the block set
$\{D_1,\ \cdots,\ D_{k+1}\}$ with $k \geq 2$.
Let $\left(j_1,\ \cdots,\ j_c\right)$ be a non-repetitive and ordered index collection, where $j_i\in\{1,\ldots,k+1\}$.
The \emph{covariance matrix} associated with this ordering is defined by
\[
{\rm Cov}^{(j_1,\ldots,j_c)}
=
D_{j_1}^T \bar{D}_{j_1}
D_{j_2}^T \bar{D}_{j_2}
\cdots
D_{j_c}^T \bar{D}_{j_c}
D_{j_1}^T \bar{D}_{j_1}.
\]
\end{definition}

\begin{remark}
The covariance matrix ${\rm Cov}^{\left(j_1,\ldots,j_c\right)}$ depends on the chosen ordering
of the index subset $\left(j_1,\ldots,j_c\right)$ and is generally neither symmetric nor Hermitian.
When the underlying row separation problem is a (loose) $c$-division problem with
$c \geq 3$, the matrix ${\rm Cov}^{(j_1,\ldots,j_c)}$ is always normal.
\end{remark}

The next corollary clarifies the structural meaning of the covariance matrix
${\rm Cov}^{(j_1,\ldots,j_c)}$.
In particular, it shows that the non-vanishing of the covariance matrix associated
with a given ordering of $\left(j_1,\ldots,j_c\right)$ is equivalent to the existence of a closed chain
of non-orthogonal interactions among the corresponding row blocks.

\begin{corollary}
\label{cor:Cov i j}
Consider a (loose) $c$-division problem with $k+1\geq c \geq 3$ associated with the block set
$\{D_1,\ \cdots,\ D_{k+1}\}$, and let $(j_1,\ldots,j_c)$ be a non-repetitive and ordered index collection, 
where $\{C\}=\{j_1,\ldots,j_c\}\subset\{1,\ldots,k+1\}$.
Then the following statements hold:
\begin{enumerate}
\item
${\rm Cov}^{(j_1,\ldots,j_c)} \neq 0$ if and only if
\[
\bar{D}_{j_i} D_{j_{i+1}}^T \neq 0
\quad \text{for } i=1,\ \cdots,\ c-1,
\qquad
\text{and}
\qquad
\bar{D}_{j_c} D_{j_1}^T \neq 0.
\]
That is, all consecutive block interactions along the cycle induced by the
ordering $(j_1,\ldots,j_c)$ are non-orthogonal.

\item
If ${\rm Cov}^{(j_1,\ldots,j_c)} \neq 0$, then for any indivisible subspace
$\mathcal{I}^{\{C\}}$ of the $\{C\}$-subset sub-problem and for all
$i=1,\ \cdots,\ c$, one has
\[
\dim\!\left(
\mathcal{I}^{\{C\}} \cap \mathcal{R}\!\left(D_{j_i}^T\right)
\right) \geq 1.
\]
\end{enumerate}
\end{corollary}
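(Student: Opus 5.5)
Part (1) reduces to the unitary-multiple structure of the cross products, which holds by Condition~(2) of Definition~\ref{def:2d} (also imposed in the loose case). For any $j,j'$, the matrix $\bar D_j D_{j'}^T$ has exactly one singular value. Since every $D_j$ has full row rank, the remark after Definition~\ref{def:2d} gives $\bar D_j D_{j'}^T=\alpha_{jj'}W_{jj'}$, with $W_{jj'}$ unitary and $\alpha_{jj'}\ge 0$. Hence $\bar D_j D_{j'}^T$ is either zero or of full rank $r$, where $r=r_1=\cdots=r_{k+1}$.

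I will regroup the covariance matrix as
\[
{\rm Cov}^{(j_1,\ldots,j_c)}=D_{j_1}^T\,\bigl(\bar D_{j_1}D_{j_2}^T\bigr)\bigl(\bar D_{j_2}D_{j_3}^T\bigr)\cdots\bigl(\bar D_{j_c}D_{j_1}^T\bigr)\,\bar D_{j_1}.
\]
If some consecutive factor vanishes, the product is zero. Conversely, if all factors are nonzero, the inner product equals $\prod\alpha$ times a unitary matrix, which is nonzero and invertible. Since $D_{j_1}^T$ has full column rank and $\bar D_{j_1}$ has full row rank, the whole product is then nonzero. Equivalently, ${\rm Cov}^{(j_1,\ldots,j_c)}$ has rank $r$ and range $\mathcal{R}(D_{j_1}^T)$. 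This proves (1).

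For (2), I will argue by contradiction, working inside the $\{C\}$-subset subproblem. Suppose $\mathcal{I}^{\{C\}}$ is indivisible there and $\dim(\mathcal{I}^{\{C\}}\cap\mathcal{R}(D_{j_i}^T))=0$ for some $i$. The first step is to check that the $\{C\}$-subset subproblem is itself a (loose) $c$-division problem on its $c$ blocks. Conditions (1)--(2) are inherited from the full problem. The $c$-subset condition, applied to the whole index set $\{C\}$, is exactly condition (3) of Definition~\ref{def:loose c-division}, or condition (2) of Definition~\ref{def:c-division}. With $c\ge3$, we can therefore apply Theorem~\ref{thm:yes or no c-division} and Corollary~\ref{thm:indivisible to row block orthogonal} with $k+1=c$.

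By Corollary~\ref{thm:indivisible to row block orthogonal}, every $s\in\{C\}$ with $\dim(\mathcal{I}^{\{C\}}\cap\mathcal{R}(D_s^T))=1$ satisfies $\bar D_sD_{j_i}^T=0$. To use this, I need the possible intersection dimensions to be only $0$ and $1$. For an indivisible subspace of a (loose) $c$-division problem this should follow from Theorem~\ref{thm:indivisible similarity} together with the defining property: every direction in every block lies in a row separation subspace meeting each block in dimension one. The minimized subspace is contained in that subspace, so every intersection dimension is at most $1$. The nonzero $\mathcal{I}^{\{C\}}$ meets some block, so the index set $T=\{s\in\{C\}:\dim(\mathcal{I}^{\{C\}}\cap\mathcal{R}(D_s^T))=1\}$ is nonempty and $j_i\notin T$.

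Now walk along the cycle $(j_1,\ldots,j_c)$, starting from an element of $T$, until the first index not in $T$. That gives consecutive indices $s\in T$ and $s'\notin T$. Corollary~\ref{thm:indivisible to row block orthogonal} applied to $j=s'$ yields $\bar D_sD_{s'}^T=0$. This contradicts part (1), since ${\rm Cov}^{(j_1,\ldots,j_c)}\neq0$ forces all consecutive cycle products to be nonzero, including the wrap-around pair $(j_c,j_1)$. Hence every $\dim(\mathcal{I}^{\{C\}}\cap\mathcal{R}(D_{j_i}^T))\ge1$.

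I expect the main obstacle to be justifying that the subset subproblem qualifies as a (loose) $c$-division problem with at least three blocks. In particular, a strict $c$-division problem has its $(c-1)$-division property inherited, which the loose definition does not require, so the cases need matching. I would handle this by noting that Theorem~\ref{thm:yes or no c-division} and Corollary~\ref{thm:indivisible to row block orthogonal} are stated for the (loose) class, which only needs conditions (1)--(3) of Definition~\ref{def:loose c-division}.
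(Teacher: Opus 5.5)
The paper states this corollary without proof, and your argument is correct and is the derivation its placement suggests: part (1) comes from the scalar-times-unitary form of the nonzero cross products $\bar D_j D_{j'}^T$, and part (2) comes from applying Corollary~\ref{thm:indivisible to row block orthogonal} inside the $\{C\}$-subset subproblem and using the cycle from part (1) to rule out a splitting of $\{C\}$ into mutually orthogonal groups, which is the same mechanism behind Theorem~\ref{thm:yes or no c-division}. Two cosmetic remarks: in the loose case the block sizes only need to agree along the cycle (the nonzero square factors already force this), so the global claim $r_1=\cdots=r_{k+1}$ is unnecessary; and you can drop the $0/1$ dimension step, whose justification via the subset condition is not literally part of Definition~\ref{def:3d} when $c=3$, because $\mathcal{I}^{\{C\}}\cap\mathcal{R}(D_j^T)=0$ already gives $\mathcal{R}(D_j^T)\perp\mathcal{I}^{\{C\}}$ (row separation subspaces reduce every block), so any nonzero $u\in\mathcal{I}^{\{C\}}\cap\mathcal{R}(D_s^T)$ makes the scalar-times-unitary matrix $\bar D_j D_s^T$ singular and hence zero.
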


We now establish a key numerical property of the covariance matrix
${\rm Cov}^{(j_1,\ldots,j_c)}$ associated with ordered index collection in a
(loose) $c$-division problem.
This result shows that the homogeneity constraints defining division problems
translate into a strong spectral restriction on the associated covariance
matrices.

\begin{corollary}

\label{thm:c-division to Cov}
Consider a (loose) $c$-division problem with $c \geq 3$ associated with the block
set $\{D_1,\ \cdots,\ D_{k+1}\}$.
Let $\left(j_1,\ldots,j_c\right)$ be a non-repetitive and ordered index collection
and $\{C\}=\{j_1,\ldots,j_c\}\subset\{1,\ldots,k+1\}$.
Then the covariance matrix ${\rm Cov}^{(j_1,\ldots,j_c)}$ has at most one nonzero
eigenvalue.
\end{corollary}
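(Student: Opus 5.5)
Write $P_j = D_j^T\bar D_j$. By the remark after Definition~\ref{def:2d}, every $D_j$ has the same number $r$ of rows, and each $\bar D_j D_{j'}^T$ is a scalar multiple of a unitary matrix. Since each block has full row rank and $\bar D_j D_j^T$ has a single eigenvalue, $P_j$ is a positive multiple of the orthogonal projector onto $\overline{\mathcal R(D_j^T)}$. The plan is to show that the cyclic product
\[
{\rm Cov}^{(j_1,\ldots,j_c)}=P_{j_1}P_{j_2}\cdots P_{j_c}P_{j_1}
\]
acts on $\mathcal R(D_{j_1}^T)$ (up to conjugation) as a scalar multiple of the identity, and vanishes on its orthogonal complement. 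Then its spectrum is contained in $\{0,\mu\}$ for one scalar $\mu$.

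\textbf{Step 1: the trivial case.} If ${\rm Cov}^{(j_1,\ldots,j_c)}=0$ there is nothing to prove. Otherwise, by Corollary~\ref{cor:Cov i j}(1), all consecutive products $\bar D_{j_i}D_{j_{i+1}}^T$ and $\bar D_{j_c}D_{j_1}^T$ are nonzero. Each is therefore $\alpha_i U_i$ with $\alpha_i>0$ and $U_i$ unitary.

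\textbf{Step 2: compress to an $r\times r$ matrix.} Factor
\[
{\rm Cov}^{(j_1,\ldots,j_c)}=D_{j_1}^T\,W\,\bar D_{j_1},
\qquad
W=(\bar D_{j_1}D_{j_2}^T)(\bar D_{j_2}D_{j_3}^T)\cdots(\bar D_{j_c}D_{j_1}^T)(\bar D_{j_1}D_{j_1}^T),
\]
so that $W=\beta\,V$ with $\beta>0$ and $V$ unitary. The nonzero eigenvalues of ${\rm Cov}$ coincide with those of $W\bar D_{j_1}D_{j_1}^T$, which is a positive multiple of $V$. The claim therefore reduces to showing that $V$ is a scalar multiple of the identity.

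\textbf{Step 3: use the $c$-division property.} This is the main obstacle. Take any $v\in\mathcal R(D_{j_1}^T)$. Definition~\ref{def:c-division}, or condition (3) of Definition~\ref{def:loose c-division}, gives a row separation subspace $\mathcal S^{\{C\}}$ containing $v$ that meets each $\mathcal R(D_{j_i}^T)$ in exactly one dimension. By the defining block-diagonal form of a row separation solution, a one-dimensional intersection with each block forces the corresponding rows $H_{j_i}D_{j_i}$ to be separated from the others. Consequently the projector chain maps the line through $v$ in block $j_1$ to the line in block $j_2$, and so on around the cycle, returning to block $j_1$.

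I would make this precise as follows. Write the solution $E^D{\rm diag}\{H_{j_i}\}D L^D$ in block-diagonal form. Let $\mathcal S^{\{C\}}\cap\mathcal R(D_{j_i}^T)=\mathrm{span}\{w_i\}$, with $w_1=v$. The rows outside $\mathcal S^{\{C\}}$ are orthogonal to the rows inside it, so $P_{j_{i+1}}\bar w_i\in\mathrm{span}\{\bar w_{i+1}\}$ after conjugation conventions. Following the cycle, $P_{j_1}\cdots P_{j_c}P_{j_1}\bar v\in\mathrm{span}\{\bar v\}$. Hence every vector of $\overline{\mathcal R(D_{j_1}^T)}$ is an eigenvector of ${\rm Cov}$, and an operator for which every vector of a subspace is an eigenvector acts on that subspace as a scalar. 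On the orthogonal complement ${\rm Cov}$ vanishes because of the trailing factor $P_{j_1}$. Therefore ${\rm Cov}$ has at most one nonzero eigenvalue.

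The delicate points are the conjugation bookkeeping and checking that orthogonality of the separated blocks really yields the line-to-line mapping. For the latter I would lean on Theorem~\ref{thm:1 row separation subspace to indivisible} and Corollary~\ref{cor:Cov i j}(2) to identify $\mathcal S^{\{C\}}$ with an indivisible subspace along the cycle. For a loose problem, I would first apply Corollary~\ref{thm:loose c-division splitting}. If $j_1,\dots,j_c$ are not all in one component, some consecutive product vanishes and ${\rm Cov}=0$. Otherwise the strict argument applies within that component.
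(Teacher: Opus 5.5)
Your argument is correct and is essentially the reasoning the paper leaves implicit: the paper states this corollary without proof, as a direct consequence of the subspace condition in Definition~\ref{def:c-division} and condition~(3) of Definition~\ref{def:loose c-division}. That condition makes $\mathcal S^{\{C\}}$ invariant under each $P_{j_i}$, so every $v\in\mathcal R(D_{j_1}^T)$ is an eigenvector of ${\rm Cov}^{(j_1,\ldots,j_c)}$, which therefore acts as a scalar on $\mathcal R(D_{j_1}^T)$ and vanishes on its orthogonal complement. Three corrections are needed. First, drop all the conjugation bars: $P_j=D_j^T\bar D_j$ is Hermitian with range $\mathcal R(D_j^T)$ itself, so the correct step is $P_{j_{i+1}}w_i\in\mathrm{span}\{w_{i+1}\}$. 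Second, Steps~1--2 and the loose-case splitting are superfluous, because the loose condition~(3) is identical to the strict one and Step~3 uses neither connectivity nor the unitary structure. Third, for a strict $3$-division the claim is literally condition~(2) of Definition~\ref{def:3d}, a case your citation of Definition~\ref{def:c-division} (stated only for $c\ge 4$) does not cover.
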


The condition that there exists an ordering $(j_1,\ldots,j_c)$ of $\{C\}$ such that
${\rm Cov}^{(j_1,\ldots,j_c)} \neq 0$
is therefore strictly stronger than the criterion provided by \ref{thm:yes or no c-division}.
In particular, even when a $\{C\}$-subset sub-problem satisfies the definition
of a $c$-division problem, the associated covariance matrix may still vanish.
The following example illustrates this situation.

\begin{example}
Let
\[
D_1 =
\begin{bmatrix}
1 & 0 & 0 & 0 \\
0 & 1 & 0 & 0
\end{bmatrix},
\qquad
D_2 =
\begin{bmatrix}
1 & 0 & 1 & 0 \\
0 & 1 & 0 & 1
\end{bmatrix},
\qquad
D_3 =
\begin{bmatrix}
0 & 0 & 1 & 0 \\
0 & 0 & 0 & 1
\end{bmatrix}.
\]
The row separation problem associated with the block set
$\{D_1,\ D_2,\ D_3\}$ is a $3$-division problem.
However, since $D_1 D_3^T = 0$, one obtains
\[
{\rm Cov}^{(1,2,3)} =
{\rm Cov}^{(1,3,2)} =
{\rm Cov}^{(2,1,3)} =
{\rm Cov}^{(2,3,1)} =
{\rm Cov}^{(3,1,2)} =
{\rm Cov}^{(3,2,1)} \equiv 0.
\]
\end{example}

The covariance matrices ${\rm Cov}^{(j_1,\ldots,j_c)}$ thus provide a bridge between
the homogeneity properties of division problems and a computable spectral criterion.
In the next subsection, we refine this analysis by combining covariance information
associated with $\{C\}$-subset sub-problems with the global information encoded in
the total covariance matrix (\ref{def:total covariance matrix}).

\subsubsection{Extension from $\{C\}$-subset to $\{C+1\}$-subset Sub-problems}
\label{sec:c+1 in c}

In the previous subsection, we introduced covariance matrices
${\rm Cov}^{(j_1,\ldots,j_c)}$ associated with ordered index subsets of a given
$\{C\}$-subset sub-problem and showed that their spectral properties provide a
numerical characterization of local homogeneousness within a division problem.
A natural question is whether such lower covariance information can be used
to infer higher homogeneousness, namely to determine when a $c$-division
problem extends to a $(c+1)$-division problem. This subsection addresses this question by developing a general computational
criterion for identifying division problems with higher homogeneousness level
$c \geq 3$.

To handle the different structural patterns that may arise in $\{C+1\}$-subset sub-problems with $c+1$ elements within a $c$-division problem, we classify $\{C+1\}$-subset sub-problems into three classes according to the number of non-zero entries of ${\rm Dov}^{\{C\}}$(\ref{def:total covariance matrix}). We show that the first two classes are always guaranteed to be (loose) $(c+1)$-division problems. Only a highly structured and exceptional family of configurations—characterized by a single-cycle interaction pattern in 3-class may fail to exhibit $(c+1)$-division, which will be exploited in \ref{sec:division problem construction in complex number}.

For clarity of exposition, we begin by analyzing a $k$-division problem consisting of $k+1$ row blocks; the extension to general index sets then follows naturally from \ref{def:c-division}. Recalling the total covariance matrix ${\rm Dov}$ defined in
\ref{def:total covariance matrix}, consider a (loose) $k$-division problem
associated with the block set $\{D_1,\ \cdots,\ D_{k+1}\}$.
We classify such a problem into three classes according to the number of non-zero
entries in ${\rm Dov}$:

\begin{itemize}
\item \textbf{1-class}: ${\rm Dov}$ has fewer than $3k+2$ non-zero entries;
\item \textbf{2-class}: ${\rm Dov}$ has more than $3k+4$ non-zero entries;
\item \textbf{3-class}: ${\rm Dov}$ has exactly $3k+3$ non-zero entries.
\end{itemize}

Again, we interpret row blocks as vertices of a graph and non-orthogonality relations among row blocks (as defined in \ref{def:2d}) as edges. Under this interpretation, the structure of ${\rm Dov}$ corresponds to the edge pattern of the graph, and cycles play a central role in determining whether local covariance information propagates to higher-order division properties. The following lemma and corollary formalize these observations and 
will be essential when utilizing covariance matrices
${\rm Cov}^{(j_1,\ldots,j_c)}$ associated with orderings of index subsets.

We begin with a classical graph-theoretic result.

\begin{lemma}[Cycle {\cite{Graph}}]\label{lemma:line to cycle}
A graph with $k$ vertices and $k$ edges contains at least one cycle.
\end{lemma}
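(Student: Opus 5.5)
We prove the statement for simple graphs, which is the setting here: vertices are row blocks, and edges are the off-diagonal nonzero entries of ${\rm Dov}$, counted once per unordered pair. The plan is to argue by contradiction using the standard counting property of forests. Suppose a graph $G$ with $k$ vertices and $k$ edges contained no cycle. Then $G$ is a forest, that is, a disjoint union of trees $T_1,\ldots,T_p$ with $p\geq 1$ connected components. The key intermediate fact we need is that a tree on $n$ vertices has exactly $n-1$ edges. Summing over the components then gives
\[
|E(G)| \;=\; \sum_{i=1}^{p}\bigl(|V(T_i)|-1\bigr) \;=\; k-p \;\leq\; k-1 \;<\; k,
\]
which contradicts the assumption $|E(G)|=k$.

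To establish the tree edge count, we argue by induction on $n$; the base case $n=1$ (no edges) is trivial. For the inductive step, we first show that every finite tree with $n\geq 2$ vertices has a leaf, i.e., a vertex of degree one. Take a maximal path $(\ell_0,\ldots,\ell_m)$ in the tree. Its endpoint $\ell_m$ cannot have a neighbour outside the path, since that would contradict maximality. It also cannot have a neighbour $\ell_t$ with $t<m-1$, since that would close a cycle. Hence $\ell_m$ has degree one. Deleting $\ell_m$ together with its unique edge leaves a tree on $n-1$ vertices, and the inductive hypothesis completes the step. This leaf-existence argument is the only point that requires real care, since it is where acyclicity is genuinely used. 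Everything else is bookkeeping.

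An alternative route avoids the component decomposition entirely. Repeatedly delete vertices of degree $\leq 1$; each deletion removes one vertex and at most one edge, so the quantity $|E|-|V|\geq 0$ is preserved or increases. Since the vertex set cannot be emptied while $|E|\geq|V|$ (the empty graph has $|E|-|V|=0$, but the last deleted vertex would have had degree $0$, which makes the difference strictly negative just before), the process must stop at a nonempty subgraph of minimum degree at least two. Following unused edges in such a subgraph must eventually revisit a vertex, which exhibits a cycle. We would present the forest-counting version as the main argument, since it is shorter, and cite \cite{Graph} for the standard tree edge count.
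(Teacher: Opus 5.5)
Your proof is correct and follows the standard route. The paper gives no proof of its own and simply cites Diestel, where the fact is obtained exactly as you do: a leaf exists at the end of a maximal path, so a tree on $n$ vertices has $n-1$ edges, and an acyclic graph with $p$ components therefore has $k-p<k$ edges. Your argument implicitly uses $k\geq 1$ to get $p\geq 1$, and the statement genuinely fails for the empty graph with $k=0$. This is harmless in the paper, where the vertices are the row blocks and so the vertex set is nonempty.
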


The next corollary refines this observation and is particularly relevant for the analysis of $2$-class problems.

\begin{corollary}\label{cor:line+1 to cycle}
A graph with $k$ vertices and $k+1$ edges contains a cycle involving $k'$ vertices, where $k' \leq k-1$.
\end{corollary}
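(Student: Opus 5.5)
The plan is to deduce the statement from Lemma~\ref{lemma:line to cycle}, and then handle by hand the single case in which the cycle it provides is too long. Throughout, graphs are simple: no loops and no multiple edges. This is the situation relevant here, since the graph induced by ${\rm Dov}$ has a $0/1$ adjacency pattern and the diagonal entries are not counted as edges. In particular, a simple graph on $k$ vertices has at most $k(k-1)/2$ edges. For $k\le 3$ we have $k(k-1)/2<k+1$, so the statement is vacuous, and we may assume $k\ge 4$.

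\textbf{Step 1 (a cycle exists).} Delete any one edge from the graph, leaving $k$ vertices and $k$ edges. By Lemma~\ref{lemma:line to cycle} this subgraph contains a cycle $C$, which is also a cycle of the original graph. If $C$ has at most $k-1$ vertices, we are done.

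\textbf{Step 2 (the Hamiltonian case).} Otherwise $C$ passes through all $k$ vertices, so it uses exactly $k$ edges. Since the graph has $k+1$ edges, at least one edge $e=\{u,w\}$ does not lie on $C$. Because the graph is simple, $e$ is neither a loop nor a duplicate of an edge of $C$, so $u$ and $w$ are non-adjacent along $C$. Thus $e$ is a chord of $C$, and it splits $C$ into two arcs from $u$ to $w$ with $a$ and $b$ edges respectively, where $a+b=k$ and $a,b\ge 2$.

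\textbf{Step 3 (shorter cycle).} Each arc, closed up by $e$, forms a cycle. The first has $a+1$ vertices and the second has $b+1$ vertices. Since $b\ge 2$, we get $a+1=k-b+1\le k-1$, and symmetrically $b+1\le k-1$. This yields the required cycle on $k'\le k-1$ vertices.

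The only delicate point, and the step I expect to be the main obstacle, is Step~2: one must make explicit that loops and parallel edges are excluded, because that is exactly what guarantees the extra edge is a genuine chord with both arcs of length at least $2$. Once simplicity is stated, the remaining steps are routine counting.
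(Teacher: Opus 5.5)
Your proof is correct and follows the route the paper implies: the paper gives no written proof and presents the statement simply as a refinement of Lemma~\ref{lemma:line to cycle}. Your argument makes that refinement explicit, and your restriction to simple graphs is the right reading, matching Diestel's convention and the $0/1$ off-diagonal pattern of ${\rm Dov}$. You invoke the lemma after deleting one edge; if the cycle it gives is Hamiltonian, you split it along the remaining edge, which simplicity forces to be a chord.
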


These simple structural facts allow us to establish a strong and easily verifiable sufficient condition for the extension of a $k$-division problem to a $(k+1)$-division problem, based solely on the existence of a non-zero covariance matrix.

\begin{theorem}\label{thm:Cov to c+1 division}
Considering a (loose) $k$-division problem associated with the block set
$\{D_1,\ \cdots,\ D_{k+1}\}$, the problem is a (loose) $(k+1)$-division problem if there exists 
an ordering $(j_1,\ldots,j_c)$ with $k \geq c\geq 3$ 
such that
\[
{\rm Cov}^{(j_1,\ldots,j_c)} \neq 0.
\]
\end{theorem}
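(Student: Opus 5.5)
We plan to reduce the statement to the criterion in \ref{thm:single c-division}, with the role of $c$ there played by $k+1$. Applied to the full index set $\{1,\ldots,k+1\}$, it says the following. To upgrade the (loose) $k$-division problem to a (loose) $(k+1)$-division problem, it suffices to find one controlling index $j$ with a certain property. The property is that every $v\in\mathcal{R}(D_j^T)$ lies in a row separation subspace $\mathcal{S}$ of the full problem with $\dim(\mathcal{S}\cap\mathcal{R}(D_s^T))=1$ for all $s$. In the loose case we first split the problem by \ref{thm:loose c-division splitting} and argue on the connected component that contains the cycle; the remaining components are handled by their own division property. The natural candidate for the controlling index is $j=j_1$, a vertex of the cycle $(j_1,\ldots,j_c)$ furnished by the hypothesis.

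First, we use \ref{cor:Cov i j}(1). From ${\rm Cov}^{(j_1,\ldots,j_c)}\neq 0$ it gives that consecutive blocks along the cycle are pairwise non-orthogonal. By \ref{thm:c-division to Cov}, ${\rm Cov}^{(j_1,\ldots,j_c)}$ has exactly one nonzero eigenvalue. Since it is normal, it has rank one on $\mathcal{R}(D_{j_1}^T)$ up to scaling; more precisely, the cyclic product of the scaled unitaries $\bar D_{j_i}D_{j_{i+1}}^T$ (see the remark after \ref{def:2d}) acts as a scalar on $\mathcal{R}(D_{j_1}^T)$. We interpret this as holonomy triviality: transporting a vector $v\in\mathcal{R}(D_{j_1}^T)$ around the cycle returns a multiple of $v$. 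Next, take any $v\in\mathcal{R}(D_{j_1}^T)$. Because $c\le k$, the $\{C\}$-subset is a proper subset, and the $k$-division property gives a subspace $\mathcal{S}^{\{C'\}}$ for every $k$-element subset $\{C'\}\supset\{C\}$. Each such $\mathcal{S}^{\{C'\}}$ contains $v$ and meets every block in dimension one. We would then take the $k$-subsets $\{C'_1\}=\{1,\ldots,k+1\}\setminus\{a\}$ and $\{C'_2\}=\{1,\ldots,k+1\}\setminus\{b\}$, where $a\ne b$ are chosen outside $\{C\}$ if possible, and otherwise among the cycle vertices using connectivity of ${\rm Dov}$.

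The plan is to show that the two one-dimensional pieces agree on the common blocks. Uniqueness of the line in each common block $\mathcal{R}(D_s^T)$ should follow from indivisibility (\ref{cor:minimized subspaces indivisible within c-division}) and \ref{thm:indivisible similarity}. Both subspaces contain $\mathcal{M}^{\{C\}}(v)$ by \ref{cor:subset minimized}, and \ref{cor:Cov i j}(2) forces that minimized subspace to already meet each cycle block. From each common block $s$ there is a path to the cycle in the connected graph, and the 2-division unitaries propagate the line along that path. Gluing the two subspaces via $\mathcal{S}^{\{C'_1\}}+\mathcal{S}^{\{C'_2\}}$, and using \ref{thm:1 row separation subspace to indivisible} together with \ref{cor:row separation subspace sum}, should produce a row separation subspace of the full problem that meets every block in dimension one.

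The main obstacle is this gluing step. We must show that the lines obtained in the two different $k$-subsets coincide, rather than differing by a rotation. Nontrivial holonomy around a cycle is exactly the phenomenon that would obstruct extension, which is presumably the exceptional 3-class situation. The nonvanishing of ${\rm Cov}$ with a single nonzero eigenvalue is what should pin down the lines. We would try to prove the consistency by expressing both lines on $\mathcal{R}(D_{j_1}^T)$ through the cycle operator and comparing them there, using its rank-one property.
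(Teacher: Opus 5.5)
Your architecture matches the paper's Case L3: you reduce to \ref{thm:single c-division} with controlling index $j_1$ and glue subspaces taken from two $k$-element subsets. However, the step you call the main obstacle is the heart of the theorem, and what you sketch for it does not work. There are three problems.

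(i) Pinning the lines through $\mathcal{M}^{\{C\}}(v)$ requires both $\{C'_1\}$ and $\{C'_2\}$ to contain all of $\{C\}$. When $c=k$ only one index lies outside $\{C\}$, so this fails, and appealing to connectivity of ${\rm Dov}$ is not an argument.

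(ii) Transporting a line from the cycle to another common block $s$ identifies the two pieces only along a path that avoids both $a$ and $b$. In general the path inside $\{C'_1\}$ passes through $b$ and the one inside $\{C'_2\}$ through $a$. Moreover, if one $k$-subset subproblem is loose (the paper's Case L2, which you omit), its minimized subspace does not reach every block. Then the line uniqueness you draw from \ref{cor:minimized subspaces indivisible within c-division} and \ref{thm:indivisible similarity} is unavailable.

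(iii) Most seriously, suppose the lines do agree on $\{1,\ldots,k+1\}\setminus\{a,b\}$. The sum $\mathcal{S}^{\{C'_1\}}+\mathcal{S}^{\{C'_2\}}$ is a row separation subspace of the full problem only if, whenever $\bar D_aD_b^T\neq 0$, $D_a^T\bar D_a$ maps the $b$-line of $\mathcal{S}^{\{C'_1\}}$ into the $a$-line of $\mathcal{S}^{\{C'_2\}}$. The pair $(a,b)$ belongs to neither subproblem, so nothing in your construction enforces this. \ref{cor:row separation subspace sum} does not help either, since it concerns mutually orthogonal subspaces of a single problem.

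These are holonomy conditions on cycles through $a$ and $b$, not on $\{C\}$. Comparing via ${\rm Cov}^{(j_1,\ldots,j_c)}$ alone therefore cannot decide them. Note also that this matrix is not rank one: it is a nonzero scalar multiple of $D_{j_1}^T\bar D_{j_1}$.

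The paper avoids (iii) by deleting two \emph{adjacent} cycle vertices, $\{C_1\}=\{1,\ldots,k+1\}\setminus\{j_1\}$ and $\{C_2\}=\{1,\ldots,k+1\}\setminus\{j_2\}$. It seeds the second minimized subspace with $v_2=D_{j_2}^T\bar D_{j_2}v_1$, so compatibility across the missing edge $(j_1,j_2)$ is automatic. Agreement on the other blocks is argued by contradiction through the loose splitting of $\{C_1\}\cap\{C_2\}$, and loose $k$-subsets are treated separately.

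If you want to keep your holonomy picture instead, the missing idea is the following.
\begin{itemize}
\item Every cycle of length at most $k$ already has scalar holonomy by \ref{thm:c-division to Cov}, because a $k$-division problem is a $c'$-division problem for all $c'\le k$.
\item The only uncontrolled cycles are therefore Hamiltonian ones. The shorter cycle $\{C\}$ forces every Hamiltonian cycle to have a chord, since a cycle of length $c<k+1$ cannot use only edges of a single Hamiltonian cycle. The chord factors its holonomy into two scalar ones.
\item Transporting $v$ along paths then gives a path-independent line $\ell_s\subset\mathcal{R}(D_s^T)$ for every $s$.
\item The span $\mathcal{S}=\sum_s\ell_s$ satisfies $D_s^T\bar D_s\mathcal{S}\subseteq\ell_s$ for all $s$. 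Hence it is a row separation subspace meeting each block in dimension one, with no gluing of subproblem subspaces required.
\end{itemize}
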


\begin{proof}
Let $\{C\}=\{j_1,\ldots,j_c\}$. 
We distinguish three mutually exclusive situations, according to the structure of the underlying $k$-division problem.

\medskip
\noindent\textbf{Case L1: The problem is a loose $k$-division problem.}

In this case, the conclusion follows immediately from \ref{thm:loose c-division splitting}: any loose $k$-division problem is automatically a loose $(k+1)$-division problem.

\medskip
\noindent\textbf{Case L2: The problem is a $k$-division problem admitting a loose $k$-subset.}

Assume that the problem is a $k$-division problem and that there exists a non-repetitive index subset
\[
\{C'\}\subset\{1,\ldots,k+1\}, \qquad |\{C'\}|=k,
\]
such that the $\{C'\}$-subset sub-problem is a loose $k$-division problem.
Let $j'\notin \{C'\}$ and let
\[
\{C'_1\},\ldots,\{C'_g\}, \qquad |\{C'_t\}|\le k-1,
\]
be the splitting index subsets provided by \ref{thm:loose c-division splitting}.

For any $v'\in\mathcal{R}(D_{j'}^T)$, , consider the minimized subspaces
\[
\mathcal{M}^{\{C'_t\}\cup\{j'\}}(v'), \qquad t=1,\ldots,g.
\]
By \ref{cor:minimized subspaces indivisible within c-division} and
\ref{thm:yes or no c-division}, 
\[
\mathcal{S}=\bigoplus_{t=1}^g \mathcal{M}^{\{C'_t\}\cup\{j'\}}(v')
\]
is a row separation subspace of the original problem satisfying the conditions of \ref{thm:single c-division}. Hence, the problem is a complete division problem and therefore also a $(k+1)$-division problem.

\medskip
\noindent\textbf{Case L3: All $k$-subsets are $k$-division problems.}

Assume now that the problem is a $k$-division problem and that every $\{C'\}$-subset with $|\{C'\}|=k$ is itself a $k$-division problem.
Let
\[
\{C_1\}=\{1,\ldots,k+1\}\setminus\{j_1\}, \qquad
\{C_2\}=\{1,\ldots,k+1\}\setminus\{j_2\}.
\]

For $v_1\in\mathcal{R}(D_{j_1}^T)$, define
\[
v_2 = D_{j_2}^T \bar D_{j_2} v_1.
\]
Let $\mathcal{M}^{\{C_2\}}(v_1)$ and $\mathcal{M}^{\{C_1\}}(v_2)$ be the corresponding minimized subspaces.

If for all $j\neq j_1,j_2$,
\[
\mathcal{R}(D_j^T)\cap\mathcal{M}^{\{C_2\}}(v_1)
=
\mathcal{R}(D_j^T)\cap\mathcal{M}^{\{C_1\}}(v_2),
\]
then
\[
\mathcal{S}=\mathcal{M}^{\{C_2\}}(v_1)\oplus \mathcal{M}^{\{C_1\}}(v_2)
\]
is a row separation subspace satisfying \ref{thm:single c-division}.

Otherwise, there exists an index $j$ for which the two intersections differ. In this case, define
\[
\{C_{1,2}\}=\{C_1\}\setminus\{j_2\}=\{C_2\}\setminus\{j_1\}.
\]
By \ref{cor:subset minimized}, \ref{cor:minimized subspaces indivisible within c-division} and \ref{thm:yes or no c-division}, the $\{C_{1,2}\}$-subset sub-problem is a loose $(k-1)$-division problem.

Let
\[
\{C^{1,2}_1\},\ldots,\{C^{1,2}_g\}
\]
be its splitting index subsets. For the unique $t'$ such that $j\in\{C^{1,2}_{t'}\}$, the sub-problem associated with $\{C^{1,2}_{t'}\}\cup\{j_1,j_2\}$ is a complete division problem.
Let
\[
u_j\in
\mathcal{R}(D_j^T)\cap
\mathcal{M}^{\{C^{1,2}_{t'}\}\cup\{j_1+j_2\}}(v_1),
\qquad u_j\neq0.
\]
By construction, $u_j$ belongs simultaneously to
$\mathcal{M}^{\{C_2\}}(v_1)$ and $\mathcal{M}^{\{C_1\}}(v_2)$, closing the argument.
\end{proof}

The following theorem shows that 1 and 2-class problems are naturally (loose) $(k+1)$-division and provide a computational criterion.

\begin{theorem}\label{thm:123-class}
Considering a (loose) $k$-division problem associated with the block set
$\{D_1,\ldots,D_{k+1}\}$, the problem is also a (loose) $(k+1)$-division problem if and only if for every non-repetitive index
subset $\{C\}\subset\{1,\ldots,k+1\}$ with $|\{C\}|=k+1$ and for every ordering
$(j_1,\ldots,j_{k+1})$ of $\{C\}$, the covariance matrix
${\rm Cov}^{(j_1,\ldots,j_{k+1})}$ has at most one non-zero eigenvalue.    
\end{theorem}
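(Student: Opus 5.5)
Necessity is immediate from \ref{thm:c-division to Cov} applied with $c=k+1$. If the problem is a (loose) $(k+1)$-division problem, then for the unique index subset $\{C\}=\{1,\ldots,k+1\}$ of cardinality $k+1$ and for every ordering $(j_1,\ldots,j_{k+1})$, the matrix ${\rm Cov}^{(j_1,\ldots,j_{k+1})}$ has at most one nonzero eigenvalue. So all the work lies in sufficiency. For sufficiency I would split according to the three classes of ${\rm Dov}$ introduced above, and also according to whether the problem is already loose.

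\textbf{Loose or 1-class case.} If the problem is a loose $k$-division problem, \ref{thm:loose c-division splitting} reduces to Case~L1 of \ref{thm:Cov to c+1 division}. In the 1-class case, ${\rm Dov}$ has fewer than $3k+2$ nonzero entries, so the graph has at most $k$ edges. A connected graph on $k+1$ vertices with $k$ edges is a tree. I would argue, as in Case~L2 of \ref{thm:Cov to c+1 division}, that removing a leaf $j'$ leaves a $k$-subset whose structure, combined with the single edge to $j'$, lets the minimized subspaces be glued. The tool for this is \ref{cor:minimized subspaces indivisible within c-division} together with \ref{thm:single c-division}. Alternatively, a tree forces some $k$-subset to split into orthogonal pieces (i.e.\ to be loose), which is exactly the Case~L2 situation.

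\textbf{2-class case.} Here the graph has at least $k+2$ edges. By \ref{cor:line+1 to cycle} applied after deleting a vertex (or directly by \ref{lemma:line to cycle} on a $k$-vertex subgraph carrying at least $k$ edges), there is a cycle on at most $k$ vertices. Ordering its vertices along the cycle gives, by \ref{cor:Cov i j}(1), some ${\rm Cov}^{(j_1,\ldots,j_c)}\neq0$ with $3\le c\le k$. Then \ref{thm:Cov to c+1 division} yields the conclusion. Degenerate cycles of length $2$ are excluded because ${\rm Dov}$ is a simple graph, so every cycle has length at least $3$.

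\textbf{3-class case.} This is the main obstacle. The graph now has exactly $k+1$ edges on $k+1$ vertices, so it is unicyclic. If the unique cycle has length at most $k$, \ref{thm:Cov to c+1 division} applies as before. The remaining configuration is a Hamiltonian single cycle $(j_1,\ldots,j_{k+1})$ through all blocks. In that configuration no shorter Cov is nonzero, and ${\rm Cov}^{(j_1,\ldots,j_{k+1})}$ is the only nontrivial covariance; here the hypothesis must be used.

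In the Hamiltonian case I would proceed as follows:
\begin{enumerate}
\item Fix $v_1\in\mathcal{R}(D_{j_1}^T)$ and propagate it along the cycle, $v_{i+1}=D_{j_{i+1}}^T\bar D_{j_{i+1}}v_i$. By the 2-division property (each $\bar D_j D_{j'}^T$ is a scaled unitary), this map is an isometry up to scaling.
\item Each $k$-subset omitting one vertex is a path and hence a tree. So, by the local $k$-division property, the minimized subspace of $v_1$ in $\{C\}\setminus\{j_{k+1}\}$ is the span of $v_1,\ldots,v_k$.
\item Closing the cycle returns ${\rm Cov}^{(j_1,\ldots,j_{k+1})}v_1$, a scaled unitary image of $v_1$. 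The span of the $v_i$ is a row separation subspace meeting each $\mathcal{R}(D_{j_i}^T)$ in dimension one exactly when $v_1$ is an eigenvector of this normal matrix.
\end{enumerate}

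To finish, I would show that the normal, scaled-unitary matrix ${\rm Cov}^{(j_1,\ldots,j_{k+1})}$ (restricted to $\mathcal{R}(D_{j_1}^T)$) is a scalar multiple of the identity on that range precisely when it has a single nonzero eigenvalue. This gives the required subspace for every $v_1$ and, via \ref{thm:single c-division}, the $(k+1)$-division property. The delicate point is making rigorous the claim that the minimized subspace along a path is exactly the propagated span, and that no other row separation subspace can realize dimension-one intersections when the eigenvalues differ.
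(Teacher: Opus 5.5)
Your proposal is correct and follows the paper's proof essentially step for step. Necessity comes from the covariance corollary. Sufficiency uses the split into the loose, 1-class, 2-class and 3-class cases: the 1-class tree case is handled by gluing at a leaf, the 2-class and short-cycle 3-class cases reduce to a nonzero shorter covariance and the earlier extension theorem, and the Hamiltonian single-cycle case is settled by the spectral hypothesis together with the single-controlling-block corollary.

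Your propagation argument in the Hamiltonian case supplies detail the paper only asserts. One small point: propagating $v_1$ along $j_1\to j_2\to\cdots\to j_{k+1}\to j_1$ produces the covariance for the reversed ordering, i.e. the adjoint of ${\rm Cov}^{(j_1,\ldots,j_{k+1})}$. This is harmless, because the restricted matrix is normal and the hypothesis covers all orderings.
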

\begin{proof}
The necessity condition is proved with \ref{thm:c-division to Cov}. Now we prove the sufficient condition.

If the problem is a loose $k$-division problem, the conclusion follows
immediately from \ref{thm:loose c-division splitting}.

\medskip
\noindent\textbf{1-class cases}

If it is \emph{1-class}, there exist two distinct indices
$j',j''\in\{1,\ldots,k+1\}$ such that
\[
\bar D_{j''}D_j^T=0
\qquad
\text{for all } j\neq j',j''.
\]
This means that $D_{j''}$ is orthogonal to all row blocks except for possibly
$D_{j'}$. For any $v_{j''}\in\mathcal{R}(D_{j''}^T)$, define
\[
v_{j'} = D_{j'}^T \bar D_{j'} v_{j''},
\qquad
\{C\}=\{1,\ldots,k+1\}\setminus\{j''\}.
\]
Let $\mathcal{M}^{\{C\}}(v_{j'})$ be the minimized subspace of the
$\{C\}$-subset sub-problem associated with $v_{j'}$.
Then the subspace
\[
\mathcal{S}
=
\mathcal{M}^{\{C\}}(v_{j'}) \oplus \mathcal{R}(D_{j''}^T)
\]
is a row separation subspace of the original problem and satisfies the
conditions of \ref{thm:single c-division}. Hence, it is a $(k+1)$-division problem.

\medskip
\noindent\textbf{2-class cases}

If it is \emph{2-class}, by \ref{cor:line+1 to cycle}, there exists a cycle involving
$k' \le k$ vertices, corresponding to a non-repetitive index subset
$\{C\}$ with $k'$ elements.
By \ref{cor:Cov i j}, the existence of such a cycle implies that there exists
an ordering $(j_1,\ldots,j_{k'})$ of $\{C\}$ such that all consecutive block
interactions along the induced cycle are non-orthogonal, and hence
\[
{\rm Cov}^{(j_1,\ldots,j_{k'})} \neq 0.
\]

The conclusion then follows directly from
\ref{thm:Cov to c+1 division}.

\medskip
\noindent\textbf{3-class cases}

If it is \emph{3-class}, by \ref{lemma:line to cycle}, there exists a cycle involving
$k'\leq k$ row blocks, associated with a non-repetitive index subset $\{C\}$ of
cardinality $k'$, such that there exists an ordering
$(j_1,\ldots,j_{k'})$ of $\{C\}$ satisfying
\[
{\rm Cov}^{(j_1,\ldots,j_{k'})}\neq 0.
\]
If $k'\leq k$, the claim follows from
\ref{thm:Cov to c+1 division}.

In the remaining case $k'=k+1$, there exists an index subset
$\{C\}=\{j_1,\ \cdots, j_{k+1}\}$ whose non-orthogonality relations form a single
cycle.
More precisely, for any $i,i'\in\{1,\ldots,k+1\}$,
\[
\bar{D}_{j_{i'}} D_{j_i}^T \neq 0
\quad\text{if and only if}\quad
i'=i-1 \text{ or } i'=i+1,
\]
with the indices interpreted cyclically (i.e., $j_0=j_{k+1}$ and
$j_{k+2}=j_1$).

The ordering $(j_1,\ldots,j_{k+1})$ therefore induces a single cycle, and the
associated covariance matrix
${\rm Cov}^{(j_1,\ldots,j_{k+1})}$ satisfies the stated spectral condition.
The conclusion then follows from \ref{thm:single c-division}.
\end{proof}

We conclude and extend the above analyses of $k$-division problem with $k+1$ row blocks to general $c$-division problems as the following corollary.

\begin{corollary}
    
\label{thm:final criterion}
Considering a $\left(c-1\right)$-division problem associated with block set $\{D_1,\ \cdots ,\ D_{k+1}\}$(where $c\geq4$), then:

1. It is also a $c$-division problem if and only if for any non-repetitive and ordered index collection $\left(j_1,\ \cdots,\ j_{c}\right)$ 
${\rm Cov}^{\left(j_1,\ \cdots,\ j_{c}\right)}$ has at most one non-zero eigenvalue.

2. It is not a $c$-division problem if and only if there is non-repetitive and ordered index collection $\left(j_1,\ \cdots,\ j_{c}\right)$
making ${\rm Cov}^{\left(j_1,\ \cdots,\ j_{c}\right)}$ having at least two non-zero eigenvalue.
\end{corollary}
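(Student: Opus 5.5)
The plan is to reduce the corollary to Theorem~\ref{thm:123-class}, applied to each $c$-element subset separately, using the fact that the $c$-division property is defined subset-by-subset (Definition~\ref{def:c-division} and Corollary~\ref{thm:single c-division}). Statement~2 is the logical negation of statement~1 under the standing hypothesis that the problem is a $(c-1)$-division problem. So it suffices to prove statement~1.

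\textbf{Necessity.} Suppose the problem is a $c$-division problem. For any non-repetitive ordered collection $(j_1,\ldots,j_c)$, Corollary~\ref{thm:c-division to Cov} applies directly and gives that ${\rm Cov}^{(j_1,\ldots,j_c)}$ has at most one nonzero eigenvalue.

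\textbf{Sufficiency.} Fix any $\{C\}=\{j_1,\ldots,j_c\}\subset\{1,\ldots,k+1\}$. First we note that the $\{C\}$-subset subproblem inherits the $(c-1)$-division property. Conditions~(1)--(2) of Definitions~\ref{def:1d} and \ref{def:2d}, and the recursive conditions of Definition~\ref{def:c-division} for orders up to $c-1$, all refer only to sub-collections of $\{C\}$ and their row separation subspaces. The connectivity condition~(3) of Definition~\ref{def:2d} may fail on $\{C\}$, but then the subproblem is a loose $(c-1)$-division problem (Definition~\ref{def:loose c-division}).

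Relabel the subproblem as a block set of $c=k'+1$ blocks. It is then a (loose) $k'$-division problem, and by hypothesis every ordering of its full index set has a covariance matrix with at most one nonzero eigenvalue. Theorem~\ref{thm:123-class}, applied with $k'$ in place of $k$, shows that the $\{C\}$-subset subproblem is a (loose) $c$-division problem, i.e.\ a (loose) complete division problem.

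In the strict case, Theorem~\ref{thm:yes or no c-division}(1) yields an indivisible subspace meeting every $\mathcal{R}(D_{j_i}^T)$ in dimension one. Corollary~\ref{cor:minimized subspaces indivisible within c-division} and Theorem~\ref{thm:indivisible similarity} then transfer this pattern to $\mathcal{M}^{\{C\}}(v)$ for every $v\in\mathcal{R}(D_{j_1}^T)$. This gives condition~(2) of Corollary~\ref{thm:single c-division} for $\{C\}$. Since $\{C\}$ was arbitrary, the problem is a $c$-division problem.

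\textbf{Main obstacle.} The hard step is the loose case for a subset $\{C\}$. There the splitting of Corollary~\ref{thm:loose c-division splitting} produces orthogonal groups, and we must still exhibit, for $v\in\mathcal{R}(D_{j}^T)$, a row separation subspace meeting \emph{every} block of $\{C\}$ in dimension one. I would handle this as in Case~L2 of Theorem~\ref{thm:Cov to c+1 division}. Choose the controlling index $j$ in one group $\{C_t\}$, and take the minimized subspace of $v$ inside that group, which meets each block of the group in dimension one. For each other group $\{C_{t'}\}$, pick any $w_{t'}$ in one of its blocks and take its minimized subspace there. The orthogonal sum of these pieces is a row separation subspace of the $\{C\}$-subproblem by Corollary~\ref{cor:row separation subspace sum}, and it meets each $\mathcal{R}(D_s^T)$, $s\in\{C\}$, in exactly one dimension. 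This verifies Corollary~\ref{thm:single c-division}(2) for $\{C\}$. The remaining point to check is that the group-wise intersection dimensions are exactly one; this follows from Theorem~\ref{thm:yes or no c-division}(1) applied to each group, which is itself a (complete) division problem.
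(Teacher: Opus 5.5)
Your proposal is correct and follows the route the paper intends. Necessity comes from Corollary~\ref{thm:c-division to Cov}; sufficiency comes from applying Theorem~\ref{thm:123-class} to each $c$-element subset subproblem (a (loose) $(c-1)$-division problem on $c$ blocks), using that Definition~\ref{def:c-division} is checked subset by subset; and statement~2 is the logical negation of statement~1.

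Your ``main obstacle'' is not actually an obstacle. Once the $\{C\}$-subset subproblem is known to be a (loose) $c$-division problem, condition~(2) of Definition~\ref{def:c-division}, or condition~(3) of Definition~\ref{def:loose c-division}, taken with $\{C\}$ equal to the whole subproblem, is word for word the condition required for $\{C\}$ in the original problem. So the detours through Theorems~\ref{thm:yes or no c-division} and~\ref{thm:indivisible similarity}, and the group-wise construction, are correct but unnecessary.
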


\section{Solutions of the Row Separation Problem via Unitary Transformations}
\label{sec:division problem construction in complex number}

Sections~\ref{sec:supplement}--\ref{sec:division problem theory} developed a structural view of reducible TLS core problems through
row separation subproblems, indivisible subspaces, and (non-)homogeneous $\{C\}$-subset behavior.
However, this analysis is not yet constructive: even when the theory predicts that a core problem decomposes, it does not by itself provide
a practical procedure to identify the component subproblems and extract the corresponding indivisible subspaces.

A constructive approach for the real case is proposed in \cite{Possible}, where the row separation problem is transformed by orthogonal
matrices $H_j$ and $1$--$2$ division problems are built recursively.
The difficulty arises when one moves beyond the $2$-division regime.
For $c\ge 3$, the splitting mechanism naturally involves the spectrum of the covariance matrices ${\rm Cov}^{\left(j_1,\ \cdots,\ j_{c}\right)}$ associated with
$\{C\}$-subset subproblems, but ${\rm Cov}^{\left(j_1,\ \cdots,\ j_{c}\right)}$ is in general not symmetric.
As a consequence, complex eigenvalues may occur, and a direct eigen-space splitting carried out over $\mathbb{R}$ is no longer sufficient
to describe the relevant invariant directions in a way that remains compatible with real orthogonal transformations.

This motivates an explicit shift to the complex field.
Working over $\mathbb{C}$ allows us to use unitary transformations and to carry out a consistent spectral splitting of ${\rm Cov}^{\left(j_1,\ \cdots,\ j_{c}\right)}$.
Moreover, as suggested by the notion of dense problems introduced in \ref{def:dense problem}, the row separation structure over
$\mathbb{C}$ can be regarded as a refinement of the real one: the complex formulation typically reveals a denser (and hence more informative)
indivisible-subspace decomposition.
The reconstruction of real solutions from complex ones---via admissible combinations constrained, for instance, by conjugacy and orthogonality---is
important but is left for future work. In this section, we focus on the complex problem and show how to compute its indivisible subspaces
in a systematic way.

This section is organized as follows.
In \ref{sec:alternative construction}, we address the failure of ideal homogeneity in certain $\{C\}$-subset subproblems by introducing a replacement construction that preserves the row separation subspaces while restoring a tractable structure.
In \ref{sec:final strategy}, this construction is embedded into a recursive procedure that yields all indivisible subspaces of a row separation
problem over $\mathbb{C}$. Finally, \ref{sec:core problem structure} applies the resulting decomposition to clarify the structure and uniqueness of irreducible reduced forms of core problems in the complex setting, addressing the open issues left in \cite{Possible}.

\subsection{Alternative Construction}\label{sec:alternative construction}

Starting from a $k$-division problem that is not a $(k+1)$-division problem, we show how to replace a non-ideally homogeneous
$\{C\}$-subset subproblem by an equivalent row separation problem with improved structural properties, without altering the
collection of row separation subspaces.
All arguments in this subsection are carried out over the complex field~$\mathbb{C}$.

Consider a $k$-division problem $\mathsf{P}$ associated with the block set
$\{D_1,\ \cdots ,\ D_{k+1}\}$, where $D_1,\ \cdots,\ D_{k+1}\in\mathbb{C}^{r\times\hat{d}}$.
If $\mathsf{P}$ is not a $(k+1)$-division problem, then by the contrapositive of
\ref{thm:Cov to c+1 division} there exists a non-repetitive and ordered index collection $\left(j_1,\ \cdots,\ j_{k+1}\right)$ such that the associated covariance matrix
${\rm Cov}^{\left(j_1,\ \cdots,\ j_{k+1}\right)}$ has more than one non-zero eigenvalue. 

The following corollaries summarize two intrinsic properties of $\mathsf{P}$ that will be crucial for the construction.

\begin{corollary}\label{cor:non c-division detail}
Let $\mathsf{P}$ be a $k$-division problem and let $\left(j_1,\ \cdots,\ j_{k+1}\right)$ be defined as above. Then: 
\begin{enumerate}
\item $\forall \ i=2,\ \cdots,\ k$, $\bar{D}_{j_{i^{'}}}D^T_{j_{i}}\neq0$ if and only if $i^{'}=i-1$ or $i^{'}=i+1$.

\item If $i=1$, $\bar{D}_{j_{i^{'}}}D^T_{j_1}\neq0$ if and only if $i^{'}=2$ or $i^{'}=k+1$.

\item If $i=k+1$, $\bar{D}_{j_{i^{'}}}D^T_{j_{k+1}}\neq0$ if and only if $i^{'}=1$ or $i^{'}=k$.
\end{enumerate}

\begin{remark}
\ref{cor:non c-division detail} shows the special 3-class problems characterized by a single-cycle structure are (loose) $(k+1)$-divisions from \ref{thm:123-class}.   
\end{remark}

\end{corollary}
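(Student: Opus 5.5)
The plan is to argue by contradiction on the edge pattern of the graph whose vertices are the row blocks and whose edges are the non-orthogonal pairs, i.e. the off-diagonal nonzeros of ${\rm Dov}$. The statement is exactly that this graph is the single cycle $j_1 - j_2 - \cdots - j_{k+1} - j_1$, with no further edges. Two facts are available. First, $\mathsf{P}$ is a $k$-division problem but not a $(k+1)$-division problem. Second, ${\rm Cov}^{(j_1,\ldots,j_{k+1})}$ has at least two nonzero eigenvalues, in particular ${\rm Cov}^{(j_1,\ldots,j_{k+1})}\neq 0$.

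\emph{Step 1: the cycle edges are present.} Since ${\rm Cov}^{(j_1,\ldots,j_{k+1})}\neq 0$, part (1) of \ref{cor:Cov i j} (applicable because a $k$-division problem is a $3$-division problem, hence a (loose) $c$-division problem for the relevant $c$) gives
\[
\bar D_{j_i}D_{j_{i+1}}^T\neq 0 \quad (i=1,\ldots,k), \qquad \bar D_{j_{k+1}}D_{j_1}^T\neq 0 .
\]
Using $\overline{\bar D_a D_b^T}^{\,T}$-type symmetry, $\bar D_a D_b^T\neq0$ iff $\bar D_b D_a^T\neq0$. This yields the ``if'' directions of (1)--(3): each $j_i$ is adjacent to $j_{i\pm1}$ cyclically. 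In particular ${\rm Dov}$ has at least $(k+1)+2(k+1)=3k+3$ nonzero entries.

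\emph{Step 2: there are no chords.} Suppose some pair $j_i,j_{i'}$ that is not cyclically consecutive satisfies $\bar D_{j_{i'}}D_{j_i}^T\neq 0$. Then the graph has at least $k+2$ edges on $k+1$ vertices. Equivalently, ${\rm Dov}$ has at least $3k+5$ nonzero entries, so the problem is 2-class.

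A more direct route avoids the counting. The chord splits the big cycle into two shorter cycles, each using the chord and one arc. One of them has $k'$ vertices with $3\le k'\le k$, because a chord joins non-adjacent vertices and so each arc contains at least one interior vertex. Ordering that shorter cycle as $(\ell_1,\ldots,\ell_{k'})$, all consecutive interactions are nonzero. Hence ${\rm Cov}^{(\ell_1,\ldots,\ell_{k'})}\neq0$ by \ref{cor:Cov i j}(1). Then \ref{thm:Cov to c+1 division} implies $\mathsf{P}$ is a (loose) $(k+1)$-division problem. Since $\mathsf{P}$ is a genuine $k$-division problem, the loose case must be excluded. This follows from \ref{thm:yes or no c-division}: a connected problem (connectivity is guaranteed by the $2$-division condition) cannot be loose. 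That gives a $(k+1)$-division problem and contradicts the hypothesis. Therefore no chords exist, which gives the ``only if'' directions. Statements (2) and (3) are just the boundary cases $i=1$ and $i=k+1$ of the cyclic statement.

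\emph{Main obstacle.} The delicate point is the final step of Step 2: making sure the conclusion of \ref{thm:Cov to c+1 division} is a genuine, not loose, $(k+1)$-division, so that it really contradicts the hypothesis. I would handle this through connectivity of ${\rm Dov}$, using condition (3) of \ref{def:2d}, together with the splitting characterization of loose problems in \ref{thm:loose c-division splitting}. Under that characterization the blocks split into mutually orthogonal groups, which is incompatible with a connected ${\rm Dov}$.

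A secondary check is that the shorter cycle has length at least $3$, so that \ref{thm:Cov to c+1 division} applies with $c\ge3$. This holds because a chord is never between consecutive vertices.
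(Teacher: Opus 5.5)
Your argument is correct and is essentially the paper's route. The corollary is the residual ``3-class, single $(k+1)$-cycle'' case in the proof of Theorem~\ref{thm:123-class}, and you obtain it the same way: ${\rm Cov}^{(j_1,\ldots,j_{k+1})}\neq 0$ forces the cycle edges, and any chord produces a shorter cycle of length between $3$ and $k$ with nonzero covariance, which Theorem~\ref{thm:Cov to c+1 division} rules out; your chord-splitting merely replaces Corollary~\ref{cor:line+1 to cycle}. (The cycle-edge direction follows immediately from the product form of ${\rm Cov}$, so you need not invoke Corollary~\ref{cor:Cov i j}, whose hypothesis with $c=k+1$ would require $\mathsf{P}$ to be a $(k+1)$-division problem.) Your ``loose'' obstacle is not a real one, since condition~(3) of Definition~\ref{def:loose c-division} is identical to condition~(2) of Definition~\ref{def:c-division}, so a $k$-division problem satisfying it is by definition a $(k+1)$-division problem; also note that the statement must be read for $i'\neq i$, because $\bar D_{j_i}D_{j_i}^T\neq 0$.
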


\begin{corollary}\label{cor:indivisible in D_j}
Let $\mathsf{P}$ be a $k$-division problem and let $\{C\}$ be defined as above. Then:
\begin{enumerate}
\item For any indivisible subspace $\mathcal{I}$ of $\mathsf{P}$ and for all $j\in\{1,\ \cdots,\ k+1\}$,
\[
\dim\bigl(\mathcal{I}\cap\mathcal{R}(D_j^T)\bigr)\geq 1.
\]
\item For any indivisible subspace $\mathcal{I}$ of $\mathsf{P}$ and any nonzero vector
$v\in \mathcal{I}\cap\mathcal{R}(D_{j_1}^T)$, the vector $v$ is an eigenvector of ${\rm Cov}^{\left(j_1,\ \cdots,\ j_{k+1}\right)}$
associated with some eigenvalue $\lambda_s$.
\end{enumerate}
\end{corollary}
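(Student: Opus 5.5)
Statement (1) follows from the fact that $\mathsf{P}$ is a $k$-division problem with $k\geq 3$, so every $3$-element subset subproblem is a $3$-division problem. I would first reduce to an indivisible subspace $\mathcal{I}$ of $\mathsf{P}$ and fix a nonzero $v\in\mathcal{I}$. Then $v$ has a nonzero component in some $\mathcal{R}(D_{j}^T)$, and since $\mathcal{I}$ is a row separation subspace, $\mathcal{I}\cap\mathcal{R}(D_j^T)\neq 0$ for that $j$. Pick a nonzero $u$ in that intersection. By \ref{cor:minimized subspaces indivisible within c-division} and \ref{lem:indivisible minimized subspace}, $\mathcal{I}=\mathcal{M}(u)$.

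To propagate to every other block $D_s$, I would use the connectivity of ${\rm Dov}$ (condition (3) of \ref{def:2d}). By \ref{cor:non c-division detail}, ${\rm Dov}$ is exactly the single cycle $(j_1,\ldots,j_{k+1})$, so $j$ and $s$ are joined by a path of non-orthogonal consecutive blocks. Along an edge $D_j,D_{j'}$ with $\bar D_{j'}D_j^T\neq0$, the remark after \ref{def:2d} says $\bar D_{j'}D_j^T$ is a nonzero multiple of a unitary matrix. Hence $D_{j'}^T\bar D_{j'}u\neq0$, and by the row separation structure this vector lies in $\mathcal{M}(u)$, because the projection of a row separation subspace onto a block is contained in it. Iterating along the path shows $\dim(\mathcal{I}\cap\mathcal{R}(D_s^T))\geq1$ for every $s$.

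A cleaner alternative is to use \ref{thm:yes or no c-division}. $\mathsf{P}$ is a $k$-division rather than a loose one, and its $\{C\}$-subsets of size $k$ are (loose) $k$-divisions. So if some $\mathcal{I}$ had $\dim(\mathcal{I}\cap \mathcal{R}(D_j^T))=0$, \ref{thm:indivisible to row block orthogonal} (applied in the appropriate subset subproblem through \ref{cor:subset minimized}) would force $\bar D_s D_j^T=0$ for every $s$ that $\mathcal{I}$ meets. Taking $s$ to be a cycle neighbour of $j$ contradicts \ref{cor:non c-division detail}.

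For (2), take $v\in\mathcal{I}\cap\mathcal{R}(D_{j_1}^T)$ and apply the operators $D_{j_i}^T\bar D_{j_i}$ successively around the cycle. Each $D_{j_i}^T\bar D_{j_i}$ is an orthogonal projection up to scaling, by the $1$-division property. The key claim is that if $\mathcal{S}$ is a row separation subspace, then $D_j^T\bar D_j\mathcal{S}\subseteq\mathcal{S}\cap\mathcal{R}(D_j^T)$: after the block-diagonal transformation, $\mathcal{S}$ is spanned by rows of $H_jD_j$ together with other rows, and these are orthogonal within each block. Consequently ${\rm Cov}^{(j_1,\ldots,j_{k+1})}v\in\mathcal{I}\cap\mathcal{R}(D_{j_1}^T)$. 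This is exactly where I expect the main obstacle: I must show that $\dim(\mathcal{I}\cap\mathcal{R}(D_{j_1}^T))=1$. I would try to get this from \ref{thm:indivisible similarity} combined with the $k$-division property. The $\{C\}\setminus\{j_{k+1}\}$-subset subproblem yields, for each direction, a row separation subspace meeting each block in one dimension, and by \ref{cor:subset minimized} this gives an upper bound on $\mathcal{M}(v)$ restricted to those blocks. If the intersection were larger, $\mathcal{I}$ would contain a strictly smaller row separation subspace, contradicting indivisibility. Once the intersection is known to be one-dimensional, ${\rm Cov}\,v=\lambda_s v$ follows immediately.
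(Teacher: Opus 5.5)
Your first argument for (1) is correct, and it needs only the $1$- and $2$-division properties. $D_j^T\bar D_j$ is a positive multiple of the orthogonal projector onto $\mathcal{R}(D_j^T)$, and every row separation subspace is invariant under it. Each step along an edge of ${\rm Dov}$ is injective, because the corresponding $\bar D_{j'}D_j^T$ is a nonzero multiple of a unitary matrix. Connectivity of ${\rm Dov}$ then finishes the argument. You should drop the appeals to Corollary~\ref{cor:minimized subspaces indivisible within c-division}, Corollary~\ref{thm:indivisible to row block orthogonal} and Theorem~\ref{thm:indivisible similarity}: all three are stated for (loose) $(k+1)$-division problems, and $\mathsf{P}$ is by assumption neither. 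Lemma~\ref{lem:indivisible minimized subspace} is all you need. The invariance ${\rm Cov}^{(j_1,\ldots,j_{k+1})}\bigl(\mathcal{I}\cap\mathcal{R}(D_{j_1}^T)\bigr)\subseteq\mathcal{I}\cap\mathcal{R}(D_{j_1}^T)$ in (2) is also right.

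The genuine gap is the step you flag, $\dim\bigl(\mathcal{I}\cap\mathcal{R}(D_{j_1}^T)\bigr)=1$, and the route you sketch cannot give it. Theorem~\ref{thm:indivisible similarity} not only lacks its hypothesis here; its conclusion is false for $\mathsf{P}$. If $v\in\mathcal{R}(D_{j_1}^T)$ has nonzero components in two eigenspaces of ${\rm Cov}^{(j_1,\ldots,j_{k+1})}$ with different eigenvalues, then $\mathcal{M}(v)$ contains the independent vectors $v$ and ${\rm Cov}^{(j_1,\ldots,j_{k+1})}v$, whereas for an eigenvector the intersection is one-dimensional. Corollary~\ref{cor:subset minimized} gives $\mathcal{M}^{\{C'\}}(v)\subseteq\mathcal{M}(v)$, which is a lower bound on $\mathcal{M}(v)$, not an upper bound. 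Finally, the $(\{C\}\setminus\{j_{k+1}\})$-subset subproblem is a path, in which a row separation subspace meeting every block in dimension one exists for every $v$, eigenvector or not. So no argument using only that information can single out eigenvectors, which is exactly what (2) asserts.

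What is missing is an explicit construction that closes the cycle. Over $\mathbb{C}$, the nonzero ${\rm Cov}$-invariant subspace $\mathcal{I}\cap\mathcal{R}(D_{j_1}^T)$ contains an eigenvector $w$. Its eigenvalue $\lambda$ is nonzero, since each factor of ${\rm Cov}$ is injective between consecutive blocks of the cycle. Let $\mathcal{S}_w$ be spanned by $w$ and its successive images under $D_{j_{k+1}}^T\bar D_{j_{k+1}}, D_{j_k}^T\bar D_{j_k},\ldots,D_{j_2}^T\bar D_{j_2}$, giving one vector in each $\mathcal{R}(D_{j_i}^T)$. This space is invariant under every $D_j^T\bar D_j$, for three reasons:
\begin{enumerate}
\item Non-neighbours in the cycle annihilate each other, by Corollary~\ref{cor:non c-division detail}.
\item A backward step returns a multiple of the preceding chain vector. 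For adjacent $a,b$, with $\beta_{ab}$ the common singular value of $\bar D_aD_b^T$, one has $\bar D_aD_b^T\bar D_bD_a^T=\beta_{ab}^2 I$, so $D_a^T\bar D_aD_b^T\bar D_b$ acts as $\beta_{ab}^2$ on $\mathcal{R}(D_a^T)$.
\item The relation ${\rm Cov}^{(j_1,\ldots,j_{k+1})}w=\lambda w$ with $\lambda\neq 0$ shows that $D_{j_1}^T\bar D_{j_1}$ maps the last chain vector, and $D_{j_2}^T\bar D_{j_2}$ maps $w$, back into $\mathcal{S}_w$.
\end{enumerate}
Since these operators are Hermitian and each $\bar D_jD_j^T$ is scalar, $\mathcal{S}_w$ can be realized as a diagonal block. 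It is therefore a row separation subspace, meeting each $\mathcal{R}(D_j^T)$ in dimension one. As $\mathcal{I}$ is indivisible and contains $w$, Lemma~\ref{lem:indivisible minimized subspace} gives $\mathcal{I}=\mathcal{M}(w)\subseteq\mathcal{S}_w$. Hence $\mathcal{I}\cap\mathcal{R}(D_{j_1}^T)=\mathrm{span}\{w\}$, which yields (2). This is essentially the same chain the paper uses right afterwards to define the blocks $D_i^s$ of $\mathsf{P}_s$.
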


Suppose that ${\rm Cov}^{\left(j_1,\ \cdots,\ j_{k+1}\right)}$ has $l$ distinct eigenvalues.
Let $\lambda_s$ denote the $s$th eigenvalue of ${\rm Cov}^{\left(j_1,\ \cdots,\ j_{k+1}\right)}$, and let $f_s$ be its algebraic multiplicity,
for $s=1,\ \cdots,\ l$.
For each $s$, let $D^{s}_{j_1}\in\mathbb{C}^{f_s\times\hat{d}}$ be a matrix whose rows form an orthonormal basis
of the eigenspace associated with $\lambda_s$, and define recursively
\[
D^{s}_{i}=D^{s}_{i-1}D^*_{j_i}D_{j_i}, \qquad 2\leq i\leq k+1.
\]

Using these matrices, we define the following row separation problems:
\begin{itemize}
\item $\mathsf{P}_s$: the row separation problem constructed from the block set
$\{D^{s}_{1},\ \cdots,\ D^{s}_{k+1}\}$.

\end{itemize}

\begin{corollary}
    
\label{thm:P+ complete division}
Let $\mathsf{P}_s$ be defined above and let $f_s$ denote the algebraic multiplicity of the eigenvalue $\lambda_s$ of ${\rm Cov}^{\left(j_1,\ \cdots,\ j_{k+1}\right)}$. Then:
\begin{enumerate}
\item $\forall\ s,\ s^{'}$ and $\forall\ i,\ i^{'}$, if $s\neq s^{'}$,
\[ \bar{D}^{s}_{i}\left(\bar{D}^{s^{'}}_{i^{'}}\right)^T=0\].
\item If $f_s=1$, the problem $\mathsf{P}_s$ has exactly one indivisible subspace $\mathcal{I}^{\mathsf{P}_s}$.
\item The problem $\mathsf{P}_s$ is a complete division problem if and only if $f_s \ge 2$.

\end{enumerate}
\end{corollary}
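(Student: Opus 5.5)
The plan is to reduce everything to spectral facts about the normal matrix ${\rm Cov}^{(j_1,\ldots,j_{k+1})}$ together with the rigid single-cycle adjacency from Corollary~\ref{cor:non c-division detail}.

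\textbf{Step 0: normal form of each block.} By the $1$-division property, $\bar D_j D_j^T$ has a single eigenvalue. Hence $D_j^*D_j=\alpha_j P_j$, where $P_j$ is the orthogonal projector onto the (conjugated) row space of $D_j$. By the $2$-division property, each nonzero product $\bar D_a D_b^T$ is a scalar multiple of a unitary matrix. This gives the collapse rule
\[
P_aP_bP_a=\mu_{ab}P_a,\qquad \mu_{ab}>0\ \text{for adjacent } a,b,
\]
while $P_aP_b=0$ for non-adjacent $a,b$. With this rule, $D^s_i$ is, up to a nonzero scalar, $X_sP_{j_2}\cdots P_{j_i}$, where the rows of $X_s=D^s_{j_1}$ span the $\lambda_s$-eigenspace. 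These rows lie in $\mathcal{R}(D_{j_1}^T)$, so $X_s=X_sP_{j_1}$.

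\textbf{Step 1: statement (1).} By the remark following Definition~\ref{def:covariance matrix}, ${\rm Cov}^{(j_1,\ldots,j_{k+1})}$ is normal. Therefore its eigenspaces for distinct eigenvalues are orthogonal, which gives $X_sX_{s'}^*=0$. The argument then splits by the distance between $i$ and $i'$.

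\begin{itemize}
\item \emph{Case $i=i'$.} Telescoping with the collapse rule gives
\[
P_{j_1}P_{j_2}\cdots P_{j_i}P_{j_i}\cdots P_{j_2}P_{j_1}=c\,P_{j_1}.
\]
Hence $D^s_i(D^{s'}_i)^*\propto X_sX_{s'}^*=0$.
\item \emph{Case $|i-i'|=1$.} Write $D^{s'}_{i+1}\propto D^{s'}_iP_{j_{i+1}}$. Since $P_{j_{i+1}}P_{j_i}P_{j_{i+1}}\propto P_{j_{i+1}}$, the cross product $D^s_iP_{j_{i+1}}(D^{s'}_i)^*$ is proportional to $D^s_{i+1}(D^{s'}_{i+1})^*$. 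The latter vanishes by the previous case.
\item \emph{Case of non-adjacent $i,i'$ in the cycle.} The products vanish immediately because $P_{j_i}P_{j_{i'}}=0$.
\item \emph{Closing pair $(1,k+1)$.} The cycle closes through the adjacency of $j_1$ and $j_{k+1}$, so this pair is not covered by the forward telescoping. I will handle it using the eigen-relation $X_s\,{\rm Cov}=\lambda_s X_s$. This lets me rewrite $X_s$ as $\lambda_s^{-1}X_sP_{j_1}\cdots P_{j_{k+1}}P_{j_1}$, which converts the pair into one of the cases already treated.
\end{itemize}

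\textbf{Step 2: statements (2) and (3).} By cyclic intertwining ($AB$ versus $BA$), the rows of $D^s_i$ span an eigenspace of the shifted covariance ${\rm Cov}^{(j_i,\ldots,j_{i-1})}$ with the same eigenvalue $\lambda_s$. From Step~1, each $D^s_i$ is a scaled isometry onto its row space, and the cross products $\bar D^s_i(D^s_{i'})^T$ are scalar multiples of unitaries (or zero, along the same cycle pattern).

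\begin{itemize}
\item \emph{If $f_s=1$.} Every block $D^s_i$ is a single row, so the only nonzero row separation subspace is the span of all these rows. By Theorem~\ref{thm:1 row separation subspace to indivisible}, with one-dimensional intersections, it is the unique indivisible subspace $\mathcal{I}^{\mathsf{P}_s}$.
\item \emph{If $f_s\ge 2$.} $\mathsf{P}_s$ satisfies the $1$- and $2$-division conditions. Every lower-order covariance of $\mathsf{P}_s$ inherits the at-most-one-eigenvalue property from $\mathsf{P}$. The full cycle covariance of $\mathsf{P}_s$ equals $\lambda_s$ times the identity on its range, so it has exactly one nonzero eigenvalue. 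Corollary~\ref{thm:final criterion}, applied inductively in $c$, then yields a $(k+1)$-division problem, i.e.\ a complete division problem.
\item \emph{Converse for (3).} If $f_s=1$, the blocks have rank $1<2$, which violates the $1$-division condition. So $\mathsf{P}_s$ is not a complete division problem.
\end{itemize}

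\textbf{Expected obstacle.} The delicate points are twofold. First, the closing pair $(1,k+1)$ in Step~1. Second, a possible zero eigenvalue $\lambda_s=0$, where the eigen-relation cannot be inverted. For $\lambda_s=0$ I would argue that the forward chain $D^s_{k+1}P_{j_1}$ vanishes, and treat this case separately using only the forward telescoping.
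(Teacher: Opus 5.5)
The paper states this corollary without proof, so there is no authors' argument to compare yours against. Judged on its own, your mechanism is the right one and proves all three items: the collapse rule $P_aP_bP_a=\mu_{ab}P_a$ together with the single-cycle pattern of Corollary~\ref{cor:non c-division detail}. You correctly read (1) as Hermitian orthogonality $\bar D^s_i(D^{s'}_{i'})^T=0$. The literal product $\bar D^s_i(\bar D^{s'}_{i'})^T$ is a bilinear pairing and fails for complex eigenvectors.

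\textbf{Eigenvalue bookkeeping.} This needs repair, and your ``expected obstacle'' points at the wrong case. Write ${\rm Cov}^{(j_1,\ldots,j_{k+1})}=D_{j_1}^TW\bar D_{j_1}$ with $W=(\bar D_{j_1}D_{j_2}^T)\cdots(\bar D_{j_{k+1}}D_{j_1}^T)$. Every factor is a nonzero multiple of a unitary, by the $2$-division property and the cycle adjacency, so $W$ is too. Together with $\bar D_{j_1}D_{j_1}^T=\beta I$, this gives three things directly:
\begin{itemize}
\item[(a)] Cov is normal. The remark you cite is stated for length-$c$ covariances of $c$-division problems, which $\mathsf P$ is not.
\item[(b)] Cov vanishes on $\mathcal R(D_{j_1}^T)^\perp$.
\item[(c)] All eigenvalues of Cov on $\mathcal R(D_{j_1}^T)$ are nonzero, with a common modulus.
\end{itemize}
Hence a zero eigenvalue with eigenvectors in $\mathcal R(D_{j_1}^T)$ never occurs. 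The zero eigenspace that does occur whenever $r<\hat d$ is $\mathcal R(D_{j_1}^T)^\perp$. For it your Step~0 identity $X_s=X_sP_{j_1}$ is false, and item (1) itself fails.

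For an example, take $D_{j_1}=[I_2\ 0]$, $D_{j_2}=\tfrac{1}{\sqrt2}[I_2\ I_2]$, $D_{j_3}=\tfrac{1}{\sqrt2}[I_2\ U]$ with $U={\rm diag}(e^{i\theta},e^{-i\theta})$ and $\theta\notin\{0,\pi\}$. The zero-eigenspace block $[0\ I_2]$ becomes $\tfrac12[I_2\ I_2]$ at the second step. This is not orthogonal to $\tfrac12[1\ 0\ 1\ 0]$, which comes from the eigenvector $e_1$.

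So state explicitly that $\lambda_1,\ldots,\lambda_l$ are the nonzero eigenvalues, and drop the separate $\lambda_s=0$ treatment.

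The closing pair also needs no inversion. In your row convention, the forward cycle acting on $X_s$ is, up to positive factors, ${\rm Cov}^*$ in column form, so it returns $\bar\lambda_sX_s$. By normality, the cycle can be absorbed onto whichever eigenvector side is convenient.

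\textbf{Item (2).} Single-row blocks alone do not make the full span the only row separation subspace. You need that consecutive rows are non-orthogonal, which holds because $\mu_{j_ij_{i+1}}>0$; say so.

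\textbf{Item (3).} You iterate Corollary~\ref{thm:final criterion}, whose support (Theorem~\ref{thm:123-class}) is only sketched in exactly this single-cycle case. You can instead verify Definition~\ref{def:c-division} directly. Take orthonormal rows $x_1,\ldots,x_{f_s}$ spanning the row space of $X_s$, and set $x^{(1)}_a=x_a$ and $x^{(i)}_a=x_aP_{j_2}\cdots P_{j_i}$. Your Step~1 computations with $s=s'$ give $x^{(i)}_a(x^{(i')}_b)^*=c_{ii'}\delta_{ab}$. Hence the spans $\mathrm{span}\{x^{(i)}_a: i\in\{C\}\}$ are mutually orthogonal and meet each block of the $\{C\}$-subset subproblem in one dimension. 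Every nonzero row vector of $D^s_i$ is some $x^{(i)}$, and the orthonormal basis may be chosen to start with the corresponding $x$. This yields the required subspace for every subset and every starting vector at once, in a self-contained way.
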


The following theorem establishes the equivalence, in terms of row separation subspaces, between the original problem $\mathsf{P}$ and the
problem $\mathsf{P}_s$ constructed above. 

\begin{theorem}\label{thm:Pcom P equivalence}
Let $\mathsf{P}_s$ be defined above. Then $\mathsf{P}$ and row block composition of $\{\mathsf{P}_{1},\ \cdots,\ \mathsf{P}_{l}\}$ are equivalent with respect to row separation subspaces.
\end{theorem}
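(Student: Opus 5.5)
The plan is to reduce the equivalence to a comparison of indivisible subspaces, using \ref{thm:same indivisible same row separation}. It suffices to show that $\mathsf{P}$ and the row block composition $\mathsf{P}^{\mathrm{Com}}$ of $\{\mathsf{P}_1,\ldots,\mathsf{P}_l\}$ have the same collection of indivisible subspaces.

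The first step is to identify the ambient spaces. By construction, the rows of $D^s_{j_1}$ for $s=1,\ldots,l$ together span the eigenspaces of ${\rm Cov}^{(j_1,\ldots,j_{k+1})}$. This covariance matrix is normal (see the remark after \ref{def:covariance matrix}), so these rows give an orthonormal basis of $\mathcal{R}(D_{j_1}^T)$. Applying the projector $D_{j_i}^*D_{j_i}$, which is a scalar multiple of an orthogonal projector by the $1$-division property, then sends $\mathcal{R}((D^s_{i-1})^T)$ into $\mathcal{R}(D_{j_i}^T)$. Part (1) of \ref{thm:P+ complete division} shows that the images for different $s$ are mutually orthogonal. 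Since each $\bar D_{j_{i-1}}D_{j_i}^T$ is a scaled unitary matrix (remark after \ref{def:2d}), these images exhaust $\mathcal{R}(D_{j_i}^T)$. Hence, for every $i$, $\mathcal{R}(D_{j_i}^T)=\bigoplus_s \mathcal{R}((D^s_i)^T)$ orthogonally. Consequently $\mathsf{P}^{\mathrm{Com}}$ lives in the same total row space as $\mathsf{P}$, and its row separation subspaces are exactly the orthogonal sums of row separation subspaces of the individual $\mathsf{P}_s$.

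The second step shows that every indivisible subspace of $\mathsf{P}$ is an indivisible subspace of some $\mathsf{P}_s$. Let $\mathcal{I}$ be such a subspace. By \ref{cor:indivisible in D_j}, $\mathcal{I}\cap\mathcal{R}(D_{j_1}^T)\neq 0$, and any nonzero $v$ in it is an eigenvector for some $\lambda_s$. I would then follow the chain $v\mapsto D_{j_2}^*D_{j_2}v\mapsto\cdots$. Because $\mathcal{I}$ is a row separation subspace and the single-cycle structure of \ref{cor:non c-division detail} forces the intersections with consecutive blocks to be linked by these projections, $\mathcal{I}\cap\mathcal{R}(D_{j_i}^T)$ is contained in $\mathcal{R}((D^s_i)^T)$. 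Thus $\mathcal{I}$ is a row separation subspace of $\mathsf{P}_s$, and minimality transfers through \ref{lem:indivisible minimized subspace}. For the converse, an indivisible subspace of $\mathsf{P}_s$ is either the unique one (when $f_s=1$) or, when $f_s\ge2$, a minimized subspace in a complete division problem by \ref{thm:P+ complete division}. In both cases the block diagonalization for $\mathsf{P}_s$, extended by the orthogonal complements coming from the other $\mathsf{P}_{s'}$, gives a solution of the row separation problem for $\mathsf{P}$. The subspace is therefore a row separation subspace of $\mathsf{P}$. It is indivisible there because \ref{cor:subset minimized}-type monotonicity, together with the eigen-splitting of $\mathcal{I}\cap\mathcal{R}(D_{j_1}^T)$, prevents any proper row separation subspace of $\mathsf{P}$ from lying inside it.

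The main obstacle is the claim in the second step that an indivisible subspace of $\mathsf{P}$ cannot mix different eigenspaces, that is, cannot meet $\mathcal{R}(D_{j_1}^T)$ in vectors belonging to distinct $\lambda_s$. I would handle it with \ref{cor:indivisible in D_j}(2) applied to every vector of $\mathcal{I}\cap\mathcal{R}(D_{j_1}^T)$. The set of eigenvectors is closed under sums only within a single eigenspace, so all of $\mathcal{I}\cap\mathcal{R}(D_{j_1}^T)$ must lie in one eigenspace. This uses the fact that the statement applies to every nonzero vector of the intersection.
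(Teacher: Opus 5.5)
Your proposal is correct and follows essentially the paper's route. You reduce to equality of the indivisible-subspace collections via Corollary~\ref{thm:same indivisible same row separation}, place each indivisible subspace of $\mathsf{P}$ inside a single $\mathsf{P}_s$ using the eigenvector property of Corollary~\ref{cor:indivisible in D_j}, and show conversely that indivisible subspaces of each $\mathsf{P}_s$ remain indivisible in $\mathsf{P}$. Your orthogonal splitting $\mathcal{R}(D_{j_i}^T)=\bigoplus_s\mathcal{R}((D^s_i)^T)$ and your observation that all of $\mathcal{I}\cap\mathcal{R}(D_{j_1}^T)$ lies in one eigenspace make explicit steps the paper's proof leaves implicit. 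For the converse, the paper invokes Theorem~\ref{thm:1 row separation subspace to indivisible}; your restriction argument is better justified by noting that a row separation subspace of $\mathsf{P}$ lying in the span of the blocks of $\mathsf{P}_s$ is already one of $\mathsf{P}_s$, since Corollary~\ref{cor:subset minimized} concerns dropping blocks rather than refining them.
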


\begin{proof}
We firstly show that every indivisible subspace of $\mathsf{P}_s$ is also an indivisible subspace of $\mathsf{P}$.
Fix $s\in\{1,\ \cdots,\ l\}$.
If $f_s=1$, the claim follows directly from \ref{thm:1 row separation subspace to indivisible} and
\ref{thm:P+ complete division}.
If $f_s\ge 2$, then by \ref{cor:minimized subspaces indivisible within c-division},
\ref{thm:yes or no c-division}, and \ref{thm:P+ complete division}, for any indivisible subspace
$\mathcal{I}^{\mathsf{P}_s}$ of $\mathsf{P}_s$ there exists a nonzero vector
$v\in\mathcal{R}\bigl((D_1^{s})^T\bigr)$ such that $v\in\mathcal{I}^{\mathsf{P}_s}$.
Let $\mathcal{M}(v)$ denote the minimized subspace of $\mathsf{P}$ generated by $v$.
By construction, $\mathcal{M}(v)=\mathcal{I}^{\mathsf{P}_s}$, and by
\ref{thm:1 row separation subspace to indivisible}, $\mathcal{M}(v)$ is an indivisible subspace of $\mathsf{P}$.

We then prove the converse inclusion.
Let $\mathcal{I}$ be an indivisible subspace of $\mathsf{P}$, and let $v$ be a nonzero vector such that
$v\in \mathcal{I}\cap \mathcal{R}(D_{j_1}^T)$.
By \ref{cor:indivisible in D_j}, the vector $v$ is an eigenvector of ${\rm Cov}^{\left(j_1,\ \cdots,\ j_{k+1}\right)}$ associated with
some eigenvalue $\lambda_s$, and therefore
$v\in \mathcal{R}\bigl((D_1^{s})^T\bigr)$.
This implies that $\mathcal{I}$ is an indivisible subspace of $\mathsf{P}_s$.

\end{proof}

This result shows that a $\{C\}$-subset subproblem with non-ideal homogeneity can be replaced by an equivalent
row separation problem with a complete division structure, without altering the underlying row separation subspaces.
However, performing such replacements locally may destroy the global division structure of the problem.
To address this issue, we introduce in the next subsection a recursive strategy that restores global consistency
while systematically extracting all indivisible subspaces.

\subsection{Final Strategy}\label{sec:final strategy}

The construction of a row separation problem associated with a given core problem
is described in \cite{Possible} and recalled in \ref{sec:pre}.
Moreover, once the indivisible subspaces of a row separation problem are known,
all the corresponding component subproblems can be reconstructed explicitly.

Building on these ingredients, we now present a complete strategy for identifying
all indivisible subspaces of a row separation problem in $\mathbb{C}$.
The strategy combines the structural results developed in
\ref{sec:division problem theory} with the alternative construction introduced
in \ref{sec:alternative construction}.
At each step, the procedure replaces a non-ideally homogeneous subproblem by an
equivalent one with improved division structure, without altering the underlying
row separation subspaces.

An important feature of the proposed strategy is that it necessarily terminates.
Indeed, each iteration strictly reduces the row dimension of at least one row
block, and therefore an infinite recursion is impossible.
For the initial stages leading up to the $2$-division regime, the strategy coincides with the construction described in \cite{Possible};
the novel aspects arise in the treatment of higher-order division problems.

The proposed strategy is intended as a structural decomposition framework:
it provides an exact and invariant description of irreducible components,
while numerical efficiency, implementation details, and complexity analysis
are left to future work.

We now describe the final strategy for computing all indivisible subspaces of a
row separation problem.
The procedure is organized into successive division cycles and operates entirely
over the complex field~$\mathbb{C}$.
Throughout, we assume that each row block $D_j$ has full row rank, which can
always be enforced from core problems.

\medskip
\noindent\textbf{$1$-division cycle.}

\emph{Input:} a row separation problem over $\mathbb{C}$.

\begin{enumerate}
\item[\textbf{Step 1.1}]
For each $j=1,\ldots,k+1$, split the row block $D_j$ according to its distinct
singular values.

\item[\textbf{Step 1.2}]
If there exists a block $D_j$ consisting of a single row $d$, then $d$ represents
an indivisible (unambiguous) direction.
Construct the minimized subspace $\mathcal{M}(d)$ and extract the corresponding
row separation subproblem.

\item[\textbf{Step 1.3}]
Repeat Steps~1.1–1.2 until, for all $j=1,\ldots,k+1$, each block $D_j$ has at least
two rows.
At this point, the problem is a $1$-division problem and the procedure enters the
$2$-division cycle.
\end{enumerate}

\medskip
\noindent\textbf{$2$-division cycle.}

\emph{Input:} a $1$-division problem.

\begin{enumerate}
\item[\textbf{Step 2.1}]
For all pairs $j,j'\in\{1,\ldots,k+1\}$, split the blocks $D_j$ and $D_{j'}$
according to the singular values of the matrix $\bar{D}_j D_{j'}^{T}$.

\item[\textbf{Step 2.2}]
If, after splitting, there exists a block $D_j$ with only one row, return to the
$1$-division cycle.
Otherwise, proceed to Step~2.3.

\item[\textbf{Step 2.3}]
Partition the full index set $\{1,\ldots,k+1\}$ into independent subsets
$\{C^t\}_{t=1}^g$ according to \ref{def:2d}.
For each $t=1,\ldots,g$, the $\{C^t\}$-subset subproblem is a $2$-division problem.
Initialize $g$ as independent $3$-division cycles, each with input given by the
corresponding $\{C^t\}$-subset subproblem.
\end{enumerate}

\medskip
\noindent\textbf{$c$-division cycle ($c\ge 3$).}

\emph{Input:} a $(c-1)$-division problem.

\begin{enumerate}
\item[\textbf{Step c.1}]
Search for a non-repetitive and ordered index collection $\left(j_1,\ \cdots,\ j_{c}\right)$
such that the associated covariance matrix ${\rm Cov}^{\left(j_1,\ \cdots,\ j_{c}\right)}$ has more than one nonzero
eigenvalue.
If no such subset exists, proceed to Step~c.3.

\item[\textbf{Step c.2}]
Apply the spectral splitting described in \ref{sec:alternative construction} to
the $\{j_1,\ \cdots,\ j_{c}\}$-subset subproblem, and replace it by row block composition of the corresponding problems $\mathsf{P}_{s}$.
Return to the $1$-division cycle.

\item[\textbf{Step c.3}]
If Step~c.1 fails, then the current problem is a $c$-division problem.
Proceed to the $(c+1)$-division cycle.
\end{enumerate}

\medskip
The strategy terminates when the input problem is a complete division problem.

\medskip
\noindent\textbf{Remarks on parallelism and extensions.}
The proposed strategy exhibits a high degree of inherent parallelism.
For instance, after Step~2.3, the resulting $2$-division subproblems associated
with distinct index subsets $\{C^t\}$ can be processed independently.
More generally, for $c\ge 3$, incompatible index subsets
$\{C_1\}$ and $\{C_2\}$ with $\{C_1\}\cap\{C_2\}=\varnothing$ can be split and
exchanged simultaneously.

In addition, machine learning techniques may be employed to accelerate the
identification of suitable index subsets non-repetitive and ordered index collection $\left(j_1,\ \cdots,\ j_{c}\right)$, since division problems with
$c\ge 2$ admit a well-defined and structured graph representation.
Finally, alternative decomposition algorithms may be considered, provided that
their outputs are provably equivalent—either exactly or approximately—to the
indivisible-subspace decomposition produced by the present strategy.

\subsection{Core Problem Structure}\label{sec:core problem structure}

The strategy developed in \ref{sec:final strategy} does more than provide a
constructive way to compute indivisible subspaces: it also clarifies the
structural properties of row separation problems and their associated core
problems in the complex setting.
In particular, it allows us to address several questions that were left open in
\cite{Possible} concerning the role of complex-valued decompositions and the
uniqueness of reduced structures under unitary transformations.

In this subsection, we collect a number of direct consequences of the proposed strategy.
We first show that any row separation problem over $\mathbb{C}$ is equivalent,
at the level of row separation subspaces, to a row block composition of complete division problems and irreducible row separation problems which have only one indivisible subspaces.
We then characterize when the collection of indivisible subspaces is infinite when ambiguous subspaces exist.
Finally, we establish a unitary uniqueness result for irreducible component
subproblems associated with a fixed indivisible subspace.

\begin{corollary}\label{cor:row separation equal to complete}
Let $\mathsf{P}$ be a row separation problem over $\mathbb{C}$.
Then there exists a group $\{\mathsf{P_1},\ \cdots,\ \mathsf{P}_{l}\}$ consisting of complete division problems and irreducible row separation problems such that
$\mathsf{P}$ and the row block composition of $\{\mathsf{P_1},\ \cdots,\ \mathsf{P}_{l}\}$ are equivalent with respect to their row
separation subspaces.
\end{corollary}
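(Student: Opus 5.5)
The plan is to read the corollary off the termination and invariance properties of the final strategy in Section~\ref{sec:final strategy}. We track a single object: the current \emph{family} of row separation problems whose row block composition is equivalent to $\mathsf{P}$. We show this equivalence is preserved by every step of the strategy. At termination, every member of the family is either a complete division problem or a problem with exactly one indivisible subspace. Initially the family is $\{\mathsf{P}\}$, and the invariant holds trivially.

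First, we verify that each elementary operation preserves equivalence with respect to row separation subspaces.
\begin{enumerate}
\item \textbf{1-division cycle.} Splitting blocks by distinct singular values (Step~1.1), and pairwise splitting by singular values of $\bar D_j D_{j'}^T$ (Step~2.1), replace a block by mutually orthogonal sub-blocks. Every row separation subspace is compatible with such a splitting, as in \cite{Possible}, so equivalence is kept. Extracting $\mathcal{M}(d)$ for a single-row block (Step~1.2) produces a subproblem with exactly one indivisible subspace, namely $\mathcal{M}(d)$, which is indivisible by Theorem~\ref{thm:1 row separation subspace to indivisible}. This subproblem is an irreducible member of the family. The remaining part is orthogonal to it, so by Corollary~\ref{cor:row separation subspace sum} and Theorem~\ref{thm:indivisible to row separation subspace} the composition is unchanged.
\item \textbf{Step~2.3.} Partitioning by connected components of $\mathrm{Dov}$ yields a row block composition of the $\{C^t\}$-subset subproblems. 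Row blocks in different components are mutually orthogonal, so equivalence holds.
\item \textbf{Step~c.2.} The replacement is exactly Theorem~\ref{thm:Pcom P equivalence}, applied to the $\{j_1,\ldots,j_c\}$-subset subproblem. It is lifted to the whole current problem by the Subset Exchange Theorem~\ref{thm:subset exchange}, taking $\{C_1\}=\{j_1,\ldots,j_c\}$ and $\{C_2\}$ its complement. By Corollary~\ref{thm:P+ complete division}, the pieces $\mathsf{P}_s$ with $f_s=1$ are irreducible (exactly one indivisible subspace), and those with $f_s\ge2$ are complete division problems.
\end{enumerate}

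Second, we use termination. Each return to the 1-division cycle strictly decreases the row dimension of some block, or moves pieces into the family of finished members. Each advance $c\to c+1$ is bounded by the number of blocks. Hence the procedure stops. At that point every unfinished piece is a $(k+1)$-division problem on its own block set, that is, a complete division problem in the sense of Definition~\ref{def:complete division problem}. Every finished piece is either irreducible or complete. Concatenating all of them gives $\{\mathsf{P}_1,\ldots,\mathsf{P}_l\}$, and transitivity of the equivalence gives the claim. Corollary~\ref{thm:same indivisible same row separation} can be used wherever it is easier to compare indivisible subspaces than full families.

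The main obstacle is the lifting in Step~c.2. Theorem~\ref{thm:subset exchange} requires that $\mathsf{P}_2$ have the same indivisible subspaces as the $\{C_1\}$-subset subproblem, whereas Theorem~\ref{thm:Pcom P equivalence} gives equivalence of row separation subspaces. We will argue that equivalent problems have identical indivisible subspaces, since indivisibility (Definition~\ref{def:indivisible subspace}) is defined purely in terms of the collection $\{\mathcal{S}\}^{\mathrm{full}}$. We will also need to check that the composed problem again has full-row-rank blocks, so that the strategy can re-enter the 1-division cycle legitimately.
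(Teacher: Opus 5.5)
The paper gives no separate proof of this corollary. It presents it as a direct consequence of the strategy in Section~\ref{sec:final strategy}, and that is your route: equivalence is preserved step by step, and Step~c.2 is handled by Theorem~\ref{thm:Pcom P equivalence} lifted through Theorem~\ref{thm:subset exchange}. Your remark that equivalent problems have the same indivisible subspaces correctly closes the hypothesis mismatch between those two results.

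\textbf{The gap: Step~c.2.} Your Step~c.2 item, read together with ``every finished piece is either irreducible or complete'', files the $\mathsf{P}_s$ as finished members of the family. Unless $\{C_2\}=\varnothing$, this fails.
\begin{enumerate}
\item Theorem~\ref{thm:subset exchange} makes $\mathsf{P}$ equivalent only to the \emph{coupled} problem: the blocks of all $\mathsf{P}_s$ together with the blocks $D_j$, $j\in\{C_2\}$. These blocks still interact.
\item A block $D_j$ adjacent to the cycle can close a second cycle through $D_{j_1}$ whose covariance eigenspaces are not aligned with those of ${\rm Cov}^{(j_1,\ldots,j_c)}$. Then a piece with $f_s\ge 2$, which on its own has a continuum of indivisible subspaces, is split further in the full problem.
\item Minimized subspaces computed later in the remainder, without the $\mathsf{P}_s$ blocks, can be strictly too small (Corollary~\ref{cor:subset minimized}).
\end{enumerate}
So the whole current problem must be re-fed to the $1$-division cycle, as the strategy prescribes. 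The only pieces you may file as finished are the Step~1.2 extractions, the Step~2.3 components, and the pieces that reach the complete-division exit.

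This matters. If the composition in Definition~\ref{def:row block composition} may keep interacting pieces, the statement is already immediate after Step~1.1: each singular-value sub-block is by itself either a single row or a one-block complete division problem. The real content of the corollary is that the final pieces are mutually orthogonal, so that the row separation subspaces of $\mathsf{P}$ are exactly the orthogonal sums of theirs.

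\textbf{Two smaller repairs.}
\begin{enumerate}
\item In Step~1.2, Corollary~\ref{cor:row separation subspace sum} and Theorem~\ref{thm:indivisible to row separation subspace} give only one inclusion. The reverse inclusion needs every row separation subspace $\mathcal{S}$ to split along $\mathcal{M}(d)\oplus\mathcal{M}(d)^{\perp}$, i.e.\ $\mathcal{M}(d)$ must be unambiguous. For a single-row block this holds. In a solution realizing $\mathcal{S}$, the row $d$ lies in one diagonal block. Either $d\in\mathcal{S}$, so $\mathcal{M}(d)\subseteq\mathcal{S}$. Or $d$ lies in the row separation subspace spanned by the remaining diagonal blocks, so $\mathcal{M}(d)\perp\mathcal{S}$.
\item For termination, note that Step~c.2 replaces $c$ blocks by $lc$ smaller ones, so ``the row dimension of some block decreases'' is not yet a well-founded measure. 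Use instead $\sum_j r_j^2$ over all current blocks. It drops strictly at every nontrivial split, because $l\ge 2$ and the new row dimensions sum to at most the old one.
\end{enumerate}
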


\begin{corollary}\label{cor:indivisible subspaces limit}
Let $\mathsf{P}$ be a row separation problem over $\mathbb{C}$.
Then the following statements are equivalent:
\begin{enumerate}
\item $\mathsf{P}$ has a finite number of indivisible subspaces;
\item all indivisible subspaces of $\mathsf{P}$ are unambiguous.
\end{enumerate}
\end{corollary}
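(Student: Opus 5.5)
We prove the two implications separately, working throughout with the decomposition of Corollary~\ref{cor:row separation equal to complete}. By that corollary, together with Theorem~\ref{thm:subset exchange} and Corollary~\ref{thm:same indivisible same row separation}, $\mathsf{P}$ has the same indivisible subspaces as a row block composition of problems $\mathsf{P}_1,\ldots,\mathsf{P}_l$. Each $\mathsf{P}_s$ is either a complete division problem or an irreducible problem with exactly one indivisible subspace. The proof then reduces to counting indivisible subspaces in each of the two kinds of block, and checking which of them are ambiguous.

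\emph{(2)$\Rightarrow$(1).} Assume every indivisible subspace is unambiguous. Then any two distinct indivisible subspaces $\mathcal{I},\mathcal{I}'$ are orthogonal. Indeed, by Definition~\ref{def:unambiguous subspace}, either $\mathcal{I}\subseteq\mathcal{I}'$ or $\mathcal{I}\perp\mathcal{I}'$. The inclusion is excluded by indivisibility of $\mathcal{I}'$ (Definition~\ref{def:indivisible subspace}) unless the two coincide. A family of pairwise orthogonal nonzero subspaces of $\mathcal{R}(D^T)\subseteq\mathbb{C}^{\hat d}$ has at most $\hat d$ members, so the family is finite.

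\emph{(1)$\Rightarrow$(2).} We argue by contraposition. Suppose some indivisible subspace $\mathcal{J}$ is ambiguous. Then there is a row separation subspace $\mathcal{S}'$ with $\mathcal{J}\not\perp\mathcal{S}'$ and $\mathcal{J}\not\subseteq\mathcal{S}'$. By Theorem~\ref{thm:indivisible to row separation subspace}, $\mathcal{S}'$ is an orthogonal sum of indivisible subspaces, so one of its summands, $\mathcal{I}'$, satisfies $\mathcal{I}'\neq\mathcal{J}$ and $\mathcal{I}'\not\perp\mathcal{J}$. Hence $\mathcal{J}$ and $\mathcal{I}'$ are distinct, non-orthogonal indivisible subspaces. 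We must show that this forces infinitely many indivisible subspaces. The intended route is the following.
\begin{enumerate}
\item Through the equivalence above, $\mathcal{J}$ and $\mathcal{I}'$ cannot sit in a component $\mathsf{P}_s$ with a single indivisible subspace. Distinct components are mutually orthogonal by Corollary~\ref{thm:P+ complete division}(1), so $\mathcal{J}$ and $\mathcal{I}'$ must lie in one and the same complete division component $\mathsf{P}_s$.
\item In that component, every nonzero $v\in\mathcal{R}((D^s_{1})^T)$ has an indivisible minimized subspace $\mathcal{M}(v)$, by Corollary~\ref{cor:minimized subspaces indivisible within c-division}.
\item By Theorem~\ref{thm:yes or no c-division}(1) and Theorem~\ref{thm:indivisible similarity}, each such $\mathcal{M}(v)$ meets $\mathcal{R}((D^s_1)^T)$ in exactly one dimension.
\item Consequently $v\mapsto\mathcal{M}(v)$ induces an injective map from the projective space of $\mathcal{R}((D^s_1)^T)$ to the set of indivisible subspaces. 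Two distinct lines cannot give the same $\mathcal{M}(v)$, because that intersection is one-dimensional.
\item Since $f_s\ge2$ by Corollary~\ref{thm:P+ complete division}(3), this projective space is infinite over $\mathbb{C}$, and hence there are infinitely many indivisible subspaces.
\end{enumerate}

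\emph{Main obstacle.} The delicate point is step~1: a pair of non-orthogonal distinct indivisible subspaces must land inside a single complete division component of dimension $f_s\ge2$. This relies on the row block composition keeping different components orthogonal. It also relies on the equivalence transferring indivisible subspaces exactly, not merely row separation subspaces, which we will read off from the proof of Theorem~\ref{thm:Pcom P equivalence}.

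If that transfer proves troublesome, the fallback is to avoid the decomposition altogether. One would take $v\in\mathcal{J}\cap\mathcal{R}(D_j^T)$ and $v'\in\mathcal{I}'\cap\mathcal{R}(D_j^T)$ for a common block $j$ that both subspaces meet. One would then argue, as in the proof of Theorem~\ref{thm:indivisible similarity}, that the vectors $v+tv'$ for $t\in\mathbb{C}$ generate pairwise distinct minimized subspaces, each of which is indivisible.
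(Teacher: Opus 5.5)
Your argument is correct. The paper gives no separate proof of this corollary; it presents it as a direct consequence of Corollary~\ref{cor:row separation equal to complete}. In that decomposition, a complete division component carries infinitely many (ambiguous) indivisible subspaces over $\mathbb{C}$, and every other component carries exactly one. Finiteness, absence of complete division components, and unambiguity of all indivisible subspaces therefore coincide.

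Your (1)$\Rightarrow$(2) is exactly this reading made explicit. The injectivity of $v\mapsto\mathcal{M}(v)$ on lines of a single block is a clean way to certify the infinitude.

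Your (2)$\Rightarrow$(1) takes a genuinely different route. Instead of excluding complete division components, you read off from Definitions~\ref{def:unambiguous subspace} and~\ref{def:indivisible subspace} that distinct unambiguous indivisible subspaces are orthogonal, so there are at most $\dim\mathcal{R}(D^T)$ of them. That direction needs neither the strategy of Section~\ref{sec:final strategy} nor the complex field. The paper's route inherits both through Corollary~\ref{cor:row separation equal to complete}.

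A few adjustments are needed in (1)$\Rightarrow$(2).
\begin{itemize}
\item The point you flag as delicate is not actually delicate. Equivalence with respect to row separation subspaces means the collections $\{\mathcal{S}\}^{\mathrm{full}}$ coincide. Indivisibility and (un)ambiguity are defined purely in terms of that collection, so they transfer verbatim.
\item What does need justification is that the components in Corollary~\ref{cor:row separation equal to complete} are mutually orthogonal. You need this so that the indivisible subspaces of the composition are exactly those of the components, and so that $\mathcal{M}(v)$ can be computed inside one component. Corollary~\ref{thm:P+ complete division}(1) gives orthogonality only for the blocks of a single spectral splitting. For the terminal components it comes from the partition into $\mathrm{Dov}$-connected subsets in Step~2.3 and the extraction of unambiguous pieces in Step~1.2.
\item A terminal complete division component need not be one of the $\mathsf{P}_s$. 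So take $\dim\mathcal{R}((D^s_1)^T)\ge 2$ from the $1$-division condition $\operatorname{rank}(D_j)\ge 2$, which every complete division problem satisfies, rather than from $f_s\ge 2$.
\item Components with at most two blocks fall outside the hypothesis $k\ge 2$ of Corollary~\ref{cor:minimized subspaces indivisible within c-division} and Theorem~\ref{thm:indivisible similarity}. They should be checked directly, which is easy.
\item The fallback is unnecessary, and it would not stand on its own. Indivisibility of $\mathcal{M}(v+tv')$ is only available inside a (loose) $(k+1)$-division problem, so the fallback already presupposes the decomposition.
\end{itemize}
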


The following theorem
shows that, regardless of whether the associated indivisible subspaces are
unambiguous or ambiguous, the irreducible reduced structure is uniquely defined
up to unitary transformations and permutation of component subproblems.
This result resolves the question of unitary uniqueness for irreducible reduced
structures in the complex setting, partially addressing an open problem raised
in \cite{Possible}.

\begin{theorem}\label{thm:indivisible to reduced structure}
Let a core problem be given over $\mathbb{C}$.
Then its irreducible reduced structure is unique up to unitary transformations
and permutation of irreducible component subproblems.
\end{theorem}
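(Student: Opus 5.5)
The plan is to pass from the core problem to its associated row separation problem (Definition~\ref{def:row separation problem constructed by core problem}) and to show that any irreducible reduced structure is determined, up to unitary freedom inside each component, by a decomposition of the full row separation space into pairwise orthogonal indivisible subspaces. Concretely, let $[B_1\mid A_{11}]$ be a core problem and suppose it admits two irreducible reduced structures, with components $[B_1^{(t)}\mid A_{11}^{(t)}]$, $t=1,\ldots,g$, and $[\widetilde B_1^{(t)}\mid \widetilde A_{11}^{(t)}]$, $t=1,\ldots,\tilde g$. By the remark following Definition~\ref{def:row separation problem constructed by core problem} (the complex extension of the result of \cite{Possible}), each reduced structure corresponds to a solution of the row separation problem for $\{U_1^*B_1,\ldots,U_{k+1}^*B_1\}$. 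Each component then corresponds to a row separation subspace $\mathcal{S}_t$ (respectively $\widetilde{\mathcal{S}}_t$), and these subspaces are pairwise orthogonal and together span $\mathcal{R}(D^T)$.

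The first step is to show that irreducibility of a component forces its subspace $\mathcal{S}_t$ to be indivisible. If $\mathcal{S}_t$ were not indivisible, Theorem~\ref{thm:indivisible to row separation subspace} would split it as an orthogonal sum of at least two indivisible subspaces. Using the reconstruction of components from row separation subspaces, this splitting would yield a further block diagonalization of that component, which contradicts Definition~\ref{def:irr}. Conversely, every indivisible subspace yields an irreducible component. Hence irreducible reduced structures correspond exactly to orthogonal decompositions $\mathcal{R}(D^T)=\bigoplus_t \mathcal{I}_t$ into indivisible subspaces.

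The second step is to compare two such decompositions $\{\mathcal{I}_t\}$ and $\{\widetilde{\mathcal{I}}_t\}$. When all indivisible subspaces are unambiguous, each $\widetilde{\mathcal{I}}_{t'}$ is either contained in or orthogonal to each $\mathcal{I}_t$ (Definition~\ref{def:unambiguous subspace}). Indivisibility then gives equality, so the two families coincide up to permutation, and Corollary~\ref{cor:indivisible subspaces limit} shows this is a finite collection. In general I will use Corollary~\ref{cor:row separation equal to complete} and Theorem~\ref{thm:Pcom P equivalence}: the problem is equivalent to a row block composition of irreducible problems, each with a single indivisible subspace, and complete division problems $\mathsf{P}_s$. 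Within one complete division problem, Theorem~\ref{thm:indivisible similarity} and Corollary~\ref{cor:minimized subspaces indivisible within c-division} show that every indivisible subspace meets every $\mathcal{R}(D_j^T)$ in a line with the same intersection pattern. Moreover, $\mathcal{M}(v)$ is obtained from $v\in\mathcal{R}((D^s_1)^T)$ by the fixed linear maps $D_{j_i}^*D_{j_i}$ built in the alternative construction. Two orthogonal decompositions of such a $\mathsf{P}_s$ therefore correspond to two orthonormal bases $\{v_t\}$ and $\{\tilde v_t\}$ of $\mathcal{R}((D^s_1)^T)$. These differ by a unitary matrix $W$, and $W$ propagates consistently to all blocks because of the scalar-times-unitary structure of $\bar D_jD_{j'}^T$ noted after Definition~\ref{def:2d}. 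I will argue that the resulting components have identical singular data, namely the same single singular values per block, so they are unitarily equivalent to one another.

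The final step is to assemble the pieces: the unambiguous parts match by permutation, and the ambiguous parts within each $\mathsf{P}_s$ are related by a unitary change of basis. Lifting these back through $P$, $Q$ and $R$ gives unitary transformations mapping one irreducible reduced structure to the other, up to permutation of components. I expect the main obstacle to be the ambiguous case. There I must verify rigorously that two different orthonormal families of indivisible subspaces in a complete division problem produce unitarily equivalent components, and not merely components of equal dimension. The first thing I would check is that the map $v\mapsto \mathcal{M}(v)$ is isometric up to a common scalar on $\mathcal{R}((D^s_1)^T)$, using that the relevant $\mathrm{Cov}$ restricted to that eigenspace equals $\lambda_s I$.
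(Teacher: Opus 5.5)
Your proposal is correct and follows essentially the same route as the paper. Both arguments pass to the row separation problem of $\{U_1^*B_1,\ldots,U_{k+1}^*B_1\}$, identify irreducible components with pairwise orthogonal indivisible subspaces, settle the unambiguous ones by the contained-or-orthogonal dichotomy, and place non-orthogonal ambiguous indivisible subspaces of the two structures in a common complete division problem from the final strategy, where the components are shown to be unitarily equivalent. The only real difference is the last step: the paper asserts that LU factorizations of the two component matrices share the same triangular factor, whereas you plan to show that every Gram compression on $\mathcal{R}((D^s_1)^T)$ is a scalar, using the scalar-times-unitary structure and ${\rm Cov}=\lambda_s I$ on the eigenspace. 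Keep in mind that equal singular values per block would not be enough on their own; it is this full Gram equality that gives unitary equivalence, and it is exactly what the paper's equal-triangular-factor claim encodes.
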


\begin{proof}
It is shown in \cite{Possible} that any reduced structure admits a general form
that is unique up to unitary transformations.
To simplify the argument, we assume without loss of generality that the given
irreducible reduced structure is already expressed in such a general form.

Let $\mathsf{P}$ denote the row separation problem constructed from the block set
$\{U_1^* B_1,\ \cdots,\ U_{k+1}^* B_{k+1}\}$.
By \ref{def:irr}, assume that the irreducible reduced structure consists of $g$
irreducible component subproblems
\[
\bigl[B^{(t)}_1 \mid A^{(t)}_{11}\bigr], \qquad t=1,\ldots,g.
\]

For each $t$, define the associated indivisible subspace
\[
\mathcal{I}_t
=
\mathcal{R}\!\left(\bar{R}\bigl(B^{(t)'}_1\bigr)^T\right),
\]
where $B^{(t)'}_1$ denotes the corresponding row block in the transformed matrix
$P^* B_1 R$.
As shown in \cite{Possible}, each $\mathcal{I}_t$ is an indivisible subspace of
$\mathsf{P}$.

If $\mathcal{I}_t$ is an unambiguous subspace, unitary uniqueness follows
immediately from Definition 3.1 and Theorem 3.2 in \cite{Possible}.

Assume now that $\mathcal{I}_t$ is an indivisible ambiguous subspace.
Suppose that there exists another irreducible reduced structure, also expressed
in general form, consisting of $g$ irreducible component subproblems
\[
\bigl[B^{(t')}_1 \mid A^{(t')}_{11}\bigr], \qquad t'=1,\ldots,g,
\]
where the equality of $g$ is guaranteed by Theorem~3.51 in \cite{Possible}.
Let
\[
\mathcal{I}'_1
=
\mathcal{R}\!\left(\bar{R}\bigl(B^{(1)'}_1\bigr)^T\right)
\]
be an indivisible subspace of the second reduced structure such that
$\mathcal{I}'_1 \not\perp \mathcal{I}_t$.
By construction and by the results of \ref{sec:final strategy}, these two
subspaces belong to the same complete division problem.

Applying $LU$ factorizations to
$B^{(t)'}_1 (R')^*$ and $B^{(t)}_1 R^*$ therefore yields the same lower triangular
factor.
This implies that $B^{(t)}_1$ and $B^{(t)'}_1$ are equivalent under a unitary
transformation, completing the proof.
\end{proof}

\begin{remark}
The presence of indivisible ambiguous subspaces in a row separation problem over
$\mathbb{C}$ indicates that the associated core problem contains repetitive
irreducible component subproblems.
This observation is particularly relevant for the analysis of solvability and
identifiability properties of the core problem.
\end{remark}

\begin{corollary}\label{cor:reduced structure limit}
A core problem admits only a finite number of irreducible reduced structures up
to unitary equivalence.
\end{corollary}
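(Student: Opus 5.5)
The plan is to combine the unitary uniqueness of Theorem~\ref{thm:indivisible to reduced structure} with the finiteness of the combinatorial data that determines an irreducible reduced structure. Any irreducible reduced structure fixes three pieces of data.
\begin{itemize}
\item[(a)] A number $g$ of components, which is invariant by Theorem~3.51 in \cite{Possible}.
\item[(b)] For each component $t$, the pattern of intersection dimensions $\dim(\mathcal{I}_t\cap\mathcal{R}(D_j^T))$, $j=1,\ldots,k+1$, of its indivisible subspace $\mathcal{I}_t$ with the row blocks of the associated row separation problem $\mathsf{P}$.
\item[(c)] The unitary class of each component $[B_1^{(t)}\mid A_{11}^{(t)}]$.
\end{itemize}
Since $g\le \hat d$ and each intersection dimension is bounded by $r_j$, the data (a)--(b) range over a finite set. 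I would therefore reduce the claim to showing that the unitary class (c) of a component is determined by its intersection pattern (b) together with the complete division problem, or the single-indivisible problem, to which $\mathcal{I}_t$ belongs.

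The key steps, in order, are as follows. First, invoke Corollary~\ref{cor:row separation equal to complete} to replace $\mathsf{P}$ by a row block composition of finitely many problems $\mathsf{P}_1,\ldots,\mathsf{P}_l$. Each $\mathsf{P}_s$ is either a complete division problem or an irreducible problem with exactly one indivisible subspace. By Theorem~\ref{thm:indivisible to row separation subspace} and the equivalence of the composition with $\mathsf{P}$, every indivisible subspace of $\mathsf{P}$, and hence every $\mathcal{I}_t$ arising from a reduced structure, lies in exactly one $\mathsf{P}_s$.

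Second, I treat the two kinds of pieces separately. For a piece $\mathsf{P}_s$ with a single indivisible subspace there is nothing to choose. For a complete division piece, Theorem~\ref{thm:indivisible similarity} shows that all indivisible subspaces share the same intersection dimensions with every row block. The argument of Theorem~\ref{thm:indivisible to reduced structure} (the LU comparison) then shows that any two components whose indivisible subspaces lie in the same complete division piece are unitarily equivalent. Consequently, each piece contributes a single unitary class of component, and it does so with a multiplicity fixed by $\dim$ of the piece divided by $\dim\mathcal{I}$.

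Third, I conclude. An irreducible reduced structure is, up to unitary transformation and permutation, the multiset of component classes, one class per piece $\mathsf{P}_s$ with a fixed multiplicity. This multiset is uniquely determined, so in fact there is only one such structure, which in particular gives finitely many. Indeed, the statement is essentially a restatement of Theorem~\ref{thm:indivisible to reduced structure}, with finiteness following a fortiori.

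The main obstacle is the second step. The issue is the passage from ``the subspaces $\mathcal{I}_t$ and $\mathcal{I}'_{t'}$ lie in the same complete division piece'' to ``the corresponding components $[B_1^{(t)}\mid A_{11}^{(t)}]$ are unitarily equivalent, including the $A_{11}^{(t)}$ part.'' To handle it, I would first use the fact that the singular values of $A_{11}^{(t)}$ are read off from which blocks $U_j^*B_1$ the subspace $\mathcal{I}_t$ meets. Equal intersection patterns then give the same $A_{11}$ block up to unitary factors. The remaining $B$ part would be handled by the LU argument already used in the preceding proof.
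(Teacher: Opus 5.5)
Your proposal is correct and is, at bottom, the paper's argument: the paper gives no separate proof and treats the corollary as an immediate consequence of Theorem~\ref{thm:indivisible to reduced structure}, since uniqueness up to unitary transformations and permutation of the $g$ components leaves only finitely many possibilities, as you note in your third step. Your first two steps and the ``main obstacle'' only re-derive that theorem (complete-division pieces plus the LU comparison) and can be dropped, which is just as well: matching $A_{11}^{(t)}$ and $B_1^{(t)}$ separately up to unitary factors would not by itself give a unitary equivalence of the pair $[B_1^{(t)}\mid A_{11}^{(t)}]$, because the left unitary factor must be the same for both parts.
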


Although the results of \ref{sec:core problem structure} are formulated in the
complex setting, they also provide a foundation for understanding real row
separation problems. In particular, real row separation subspaces can be recovered by combining appropriate complex indivisible subspaces, subject to constraints such as complex conjugacy, orthogonality of unambiguous components, and interactions among ambiguous subspaces.
A systematic treatment of these mechanisms lies beyond the scope of the present
work and will be addressed in future research.

\section{Conclusions}\label{sec:con}

This work extends the structural theory of TLS core problems
introduced in~\cite{Possible} by providing, 
a framework for identifying \emph{all} component subproblems
associated with a reducible TLS core problem via unitary transformation.
The central contribution of this paper is an exact strategy to find all the complex indivisible subspaces which determine unitary-unique irreducible component subproblems within any given core problem via unitary transformations, without any knowledge of solubility or block structure. It also proposes a partial relationship between different core problems, which reveals that real TLS structure can be understood as arising from a restricted combination of complex indivisible subspaces, subject to algebraic and geometric compatibility conditions. This clarifies the relationship between real and complex TLS formulations and highlights the structural role played by ambiguity.

The results shed light on solubility and conditioning phenomena in TLS. They show that small but structured perturbations of the observation matrix may qualitatively alter the interaction among component subproblems, while other—even large—perturbations may leave the decomposition unchanged. This observation has direct implications for the analysis of TLS condition numbers and for the controlled modification of reducible core problems with unfavorable solubility properties.

It establishes a clear roadmap for future research directions, including the
development of efficient numerical algorithms, the systematic study of conditioning and stability of irreducible components, and applications to data-driven and machine-learning settings where structured low-rank models play a central role.

\section{Acknowledgment}
Bruno Carpentieri's work is supported by the European Regional Development and Cohesion Funds (ERDF) 2021–2027 under Project AI4AM - EFRE1052. He is a member of the \textit{Gruppo Nazionale per il Calcolo Scientifico} (GNCS) of the \textit{Istituto Nazionale di Alta Matematica} (INdAM).

This research is funded by the European Social Fund Plus
Project code ESF2\_f3\_0003 CUP: B56F24000110001
“Excellence Scholarships for PhD students on topics of strategic relevance for South Tyrol”. 

The authors would like to thank Dr. Plesinger [Technical University of Liberec] for helpful comments and insightful suggestions on the manuscript.

\bibliographystyle{plain} 
\bibliography{ref}

@article{Principalsubmatrices,
   author = {R.C. Thompson},
    title = {Principal submatrices IX: Interlacing inequalities for singular values of submatrices},
   journal = {Linear Algebra Appl.},
    volume     = {5},
    number     = {1},
    pages      = {1-12},
    year       = {1972}
}

@Book{Graph,
  author =	 {Reinhard Diestel},
  title =	 {Graph Theory},
  publisher =	 {Springer Berlin, Heidelberg},
  address =	 {},
  year =	 2017,
  edition =	 {5th}
}

@Book{EDS.Total,
  author =	 {S. V. Huffel and P. U. Lemmerling},
  title =	 {Total Least Squares and Errors-in-Variables Modeling: Analysis, Algorithms and Applications},
  publisher =	 {Kluwer Academic Publishers},
  address =	 {},
  year =	 2002,
  edition =	 {}
}

@Book{ref4,
  author =	 {G. H. Golub and C. F. Van Loan},
  title =	 {Matrix Computations},
  publisher =	 {The Johns Hopkins University Press},
  address =	 {Baltimore},
  year =	 2013,
  edition =	 {4th}
}

@Article{Ananalysisof,
  title =	 {An Analysis of the Total Least Squares problem},
  author =	 {G. H. Golub and C. F. Van Loan},
  journal =	 {SIAM J. Numer. Anal.},
  volume =	 {19},
  number     = {6},
  year =	 {1980},
  pages =	 {883-893},
}

@Article{Estimation,
  title =	 {Estimation in a Multivariate “Errors in Variable” Regression Model: Large Sample Results},
  author =	 {L. Gleser},
  journal =	 {Ann. Stat.},
  volume =	 {9},
  number     = {1},
  year =	 {1981},
  pages =	 {24-44},
}

@Article{systempulsetransfer,
  title =	 {Estimation of a system pulse transfer function in the presence of noise},
  author =	 {M. Levin},
  journal =	 {IEEE Trans. Automat. Control},
  volume =	 {9},
  year =	 {1964},
  pages =	 {229-235},
}

@Article{someidentificationmethods,
  title =	 {On a priori error estimates of some identification methods},
  author =	 {M. Aoki and P. Yue},
  journal =	 {IEEE Trans. Automat. Control},
  volume =	 {15},
  year =	 {1970},
  pages =	 {541-548},
}

@Article{Exactmaximumlikelihood,
  title =	 {Exact maximum likelihood parameter estimation of superimposed exponential
signals in noise},
  author =	 {Y. Bresler and A. Macovski},
  journal =	 {IEEE Trans. Acust., Speech, Signal Process.},
  volume =	 {34},
  year =	 {1986},
  pages =	 {1081-1089},
}

@Article{Somemodified,
  title =	 {Some modified matrix eigenvalue problems},
  author =	 {G. H. Golub},
  journal =	 {SIAM Rev},
  volume =	 {15},
  year =	 {1973},
  pages =	 {318-344},
}

@article{Scaledtotal,
    author = {C. C. Paige and Z. Strakoš},
    title = {Scaled total least squares fundamentals},
    journal = {Numerische Mathematik},
    volume     = {91},
    number     = {},
    pages      = {117-146},
    year       = {2002}
}

@article{Theanalysis,
  author  = {M. Wei},
  title   = {The analysis for the total least squares problem with more than one solution},
  journal = {SIAM Journal on Matrix Analysis and Applications},
  volume  = {13},
  number  = {3},
  pages   = {746--763},
  year    = {1992}
}

@article{Classicalworks,
   author = {I. Hnětynková and M. Plešinger and D. M. Sima and Z. Strakoš and S. V. Huffel},
    title = {The total least
squares problem in {$AX \approx B$}: A new classification with the relationship to the classical works},
   journal = {SIAM J. Matrix Anal. Appl.},
    volume     = {32},
    number     = {},
    pages      = {748-770},
    year       = {2011}
}

@article{Thecore,
   author = {I. Hnětynková and M. Plešinger and Z. Strakoš},
    title = {The core problem within a linear approximation problem {$AX \approx B$} with multiple right-hand sides},
   journal = {SIAM J. Matrix Anal. Appl.},
    volume     = {34},
    number     = {},
    pages      = {917-931},
    year       = {2013}
}

@article{BGGK,
   author = {I. Hnětynková and M. Plešinger and Z. Strakoš},
    title = {Band generalization of the {G}olub-{K}ahan bidiagonalization, generalized {J}acobi matrices, and the core problem},
   journal = {SIAM J. Matrix Anal. Appl.},
    volume     = {36},
    number     = {},
    pages      = {417-434},
    year       = {2015}
}

@article{TLSformulation,
   author = {I. Hnětynková and M. Plešinger and J. Žáková},
    title = {T{LS} formulation and core reduction for problems with structured right-hand sides},
   journal = {Linear Algebra Appl.},
    volume     = {555},
    number     = {},
    pages      = {241-265},
    year       = {2018}
}

@article{OnTLS,
   author = {I. Hnětynková and M. Plešinger and J. Žáková},
    title = {On {TLS} formulation and core reduction for data fitting with generalized models},
   journal = {Linear Algebra Appl.},
    volume     = {577},
    number     = {},
    pages      = {1-20},
    year       = {2019}
}

@article{KrylovSubspace,
   author = {I. Hnětynková and M. Plešinger and J. Žáková},
    title = {Krylov {S}ubspace approach to core problems within multilinear approximation problems: a unifying framework},
   journal = {SIAM J. Matrix Anal. Appl.},
    volume     = {44},
    number     = {},
    pages      = {53-79},
    year       = {2023}
}

@article{TLSsense,
   author = {I. Hnětynková and M. Plešinger and D. M. Sima},
    title = {Solvability of the core problem with multiple
right-hand sides in the {TLS} sense},
   journal = {SIAM J. Matrix Anal. Appl.},
    volume     = {37},
    number     = {},
    pages      = {861-876},
    year       = {2016}
}

@article{minimization,
   author = {I. Hnětynková and M. Plešinger and J. Žáková},
    title = {Solvability classes for core problems in matrix total least squares minimization},
   journal = {Applications of Mathematics},
    volume     = {64},
    number     = {},
    pages      = {103-128},
    year       = {2019}
}

@techreport{ReductionofData,
   author    = {M. Plešinger},
   title     = {The {T}otal {L}east {S}quares {P}roblem and {R}eduction of {D}ata in {$AX \approx B$}},
   institution = {Technical University of Liberec, Liberec\"{a}t, Czech Rebuplic},
   year      = {2008},
   type      = {Ph.{D}. Dissertation},
   number    = {},
   note      = {}
}

@techreport{AnalysisPropertiesandBehaviour,
   author    = {J. Žáková},
   title     = {The Core Problem—Analysis, Properties, and Behaviour},
   institution = {Technical University of Liberec, Liberec\"{a}t, Czech Rebuplic},
   year      = {2023},
   type      = {Ph.{D}. Dissertation},
   number    = {},
   note      = {}
}

@article{Algebraic,
   author = {M. Wei},
    title = {Algebraic relations between the total least squares and least squares problems with more than one solution},
    journal = {Numerische Mathematik},
    volume     = {62},
    number     = {},
    pages      = {123-148},
    year       = {1992}
}

@article{Thedata,
   author = {R. D. DeGroat and E. M. Dowling},
    title = {The data least squares problem and channel equalization},
   journal = {IEEE Trans.
Signal Process.},
    volume     = {41},
    number     = {},
    pages      = {407-411},
    year       = {1993}
}

@article{Possible,
   author = {S. Yu and Y. Jing},
    title = {Possible Reduced Structure of the Core Problem within the Total Least Square Problem},
   journal = {SIAM J. Matrix Anal. Appl.},
    volume     = {46},
    number     = 2,
    pages      = {1616-1639},
    year       = {2025}
}

@article{OptimalBackward,
   author = {J. S and Y. Wei},
    title = {Optimal Backward Error of a Total Least Squares and Its Randomized Algorithms},
   journal = {SIAM J. Matrix Anal. Appl.},
    volume     = {46},
    number     = {3},
    pages      = {2116-2139},
    year       = {2025}
}
\end{document}